\newcommand{\longsquiggly}{\xymatrix{{}\ar@{~>}[r]&{}}}
\numberwithin{equation}{section}
\newtheorem{theorem}{Theorem}[section]
\newtheorem{lemma}{Lemma}[section]
\newtheorem{corollary}{Corollary}[section]
\newtheorem{proposition}{Proposition}[section]
\newtheorem{definition}{Definition}[section]
\newcommand{\bbint}[2]{\ensuremath{\;\backslash\!\!\!\!\backslash\!\!\!\!\!\int_{#1}^{#2}}}
\newcommand{\floor}[1]{\lfloor #1 \rfloor}
\newlength{\dhatheight}
\newcommand{\reglim}[2]{\ensuremath{\overset{\times}{\underset{#1\rightarrow #2}{\lim}}}}
\newcommand{\hypergeom}[5]{\ensuremath{{}_{#1}F_{#2}}\!\!\left.\left(\begin{array}{c}
		#3   \\
		#4 
	\end{array}\right|#5\right)}
\begin{document}
% FOR AMS CLASS
	\title[Finite Part Integration]{Regularized Limit, analytic continuation and finite-part integration}
	
	\author{Eric A. Galapon}
	\address{Theoretical Physics Group, National Institute of Physics, University of the Philippines, Diliman Quezon City, 1101 Philippines}
	\email{eagalapon@up.edu.ph}
	\date{\today}
	\subjclass[2000]{30B40,40A10,30E15,44A99}

%FOR ARTICLE CLASS
%\title{{\bf Regularized Limit, Analytic Continuation and Finite-Part Integration}}
%
%\author{Eric A. Galapon \thanks{email: eagalapon@up.edu.ph}\\Theoretical Physics Group, National Institute of Physics\\University of the Philippines, Diliman Quezon City\\1101 Philippines}
%
%\date{\today}

\begin{abstract} Finite-part integration is a recent method of evaluating a convergent integral in terms of the finite-parts of divergent integrals deliberately induced from the convergent integral itself [E. A. Galapon, Proc. R. Soc., A 473, 20160567 (2017)]. Within the context of finite-part integration of the Stieltjes transform of functions with logarithmic growths at the origin, the relationship is established between the analytic continuation of the Mellin transform and the finite-part of the resulting divergent integral when the Mellin integral is extended beyond its strip of analyticity.   It is settled that the analytic continuation and the finite-part integral coincide at the regular points of the analytic continuation. To establish the connection between the two at the isolated singularities of the analytic continuation, the concept of regularized limit is introduced to replace the usual concept of limit due to Cauchy when the later leads to a division by zero. It is then shown that the regularized limit of the analytic continuation at its isolated singularities equals the finite-part integrals at the singularities themselves. The treatment gives the exact evaluation of the Stieltjes transform in terms of finite-part integrals and yields the dominant asymptotic behavior of the transform for arbitrarily small values of the parameter in the presence of arbitrary logarithmic singularities at the origin. 
\end{abstract}
\maketitle

\section{Introduction}
Divergent integrals may arise from a well-defined integral when the integrand is expanded and the resulting series integrated term by term without the required uniformity conditions for the interchange of the order of integration and summation to be valid. For example, an attempt to evaluate the Stieltjes transform
\begin{equation}\label{stst}
	\int_0^a \frac{f(t)}{(\omega+t)}\,\mathrm{d}t
\end{equation}
by binomially expanding the kernel about $\omega=0$,
\begin{equation}\label{bino}
		\frac{1}{(\omega+t)}=\sum_{k=0}^{\infty} (-1)^k \frac{\omega^k}{t^{k+1}},
\end{equation}
inside the integral and then distributing the integration in the series lead to the infinite series
\begin{equation}\label{seriesmo}
	\sum_{k=0}^{\infty} (-1)^k \omega^k \int_0^{a}\frac{f(t)}{t^{k+1}}\,\mathrm{d}t .
\end{equation}
If $f(t)$ is analytic at the origin, the series degenerates into an infinite series of divergent integrals. One may attempt at assigning values to the induced divergent integrals to give meaning to the series \eqref{seriesmo}. However, it is known that doing so, say, by analytic continuation or by finite-parts, may only partially reproduce some terms of the actual value of the integral. This problem is known as the problem of missing terms arising from term by term integration involving divergent integrals \cite{wong,wong2,wong3,galapon2}. 

McClure and Wong were the first to give a systematic solution to the problem of missing terms in the context of the asymptotic evaluation of the Stieltjes transform \eqref{stst} using the distributional approach where the divergent integrals are associated with distributions over a certain test function space \cite{wong3}. In particular, they associated the singular functions $t^{-s-\alpha}$ and $t^{-s-1}$ with the distributions
\begin{equation}
 \left<t^{-s-\alpha},\phi\right> = \frac{1}{(\alpha)_s}\int_{0}^{\infty} t^{-\alpha}\phi^{(s)}(t)\,\mathrm{d}t ,
\end{equation} 
\begin{equation}
\left<t^{-s-1},\phi\right> = -\frac{1}{s!}\int_{0}^{\infty} \phi^{(s)}(t)\ln t\,\mathrm{d}t ,
\end{equation} 
for non-negative integer $s$ and positive $\alpha<1$. By establishing the relationship between these distributions with the remainder terms in their expansions of the Stieltjes transform, they were able to obtain the missing terms. A feature of their work is the liberal use of the analytic continuation of the Mellin transform to assign values to integrals that would otherwise diverge. The method has been extended to allow asymptotic evaluation of the Stieltjes transform for algebraically decaying function \cite{lopez}, and it has been applied in deriving spectacularly accurate non-Poincar\'e type asymptotic expansions \cite{kaye}. 

Recently we revisited the problem of missing terms in the context of the exact evaluation of the Stieltjes transform \eqref{stst} using term by term integration involving divergent integrals \cite{galapon2}. Our effort there was motivated by our earlier observation that the Cauchy principal value and the Hadamard finite-part integral assumed contour integral representations  in the complex plane \cite{galapon1}. This led to the idea of recasting the problem of evaluating the Stieltjes transform \eqref{stst} in the complex plane where the finite-part of the divergent integrals induced by naive term by term integration could assume a contour integral representation. It was indeed the case that the finite-part integrals of the divergent integrals in the expansion \eqref{seriesmo} assumed a contour integral representation. This led to the definite assignment of the divergent integrals with values equal to their finite-parts and the identification of the missing terms as contributions coming from the singularity of the kernel of transformation. We referred to the method of evaluating a well-defined integral, such as the Stieltjes transform, in terms of the finite-parts of divergent integrals through their complex contour integral representations as finite-part integration \cite{galapon2}. 

We have since applied finite-part integration in evaluating the generalized Stieltjes transform and to date we have the result \cite{galapon3,galapon4,galapon5},
\begin{equation}\label{st}
	\int_0^a \frac{f(t)}{(\omega+t)^{\rho}}\mathrm{d}t = 	\sum_{k=0}^{\infty} \binom{-\rho}{k} \omega^k \bbint{0}{a}\frac{f(t)}{t^{k+\rho}}\,\mathrm{d}t + \Delta_{sc},\; \rho\geq 1,\;0<a\leq\infty,
\end{equation}
for complex analytic $f(t)$ in the interval $[0,a)$; the integral $\bbint{0}{a}t^{-k-\rho} f(t)\mathrm{d}t$ is the finite-part of the divergent integral $\int_{0}^{a}t^{-k-\rho} f(t)\mathrm{d}t$, $f(0)\neq 0$; $\Delta_{sc}$ is the recovered missing term and is traceable to the singularity of the kernel $(\omega+t)^{-\rho}$ in the complex plane which is at $t=-\omega$; the radius of convergence of the infinite series with respect to $\omega$  is the smaller of the upper limit of integration $a$ and the distance of the singularity of $f(t)$ nearest to the origin. Equation \eqref{st} represents an exact evaluation of the Stieltjes transform in terms of finite-parts of divergent integrals; moreover, it gives the dominant asymptotic behavior of the transform for arbitrarily small $\omega$ which is precisely described by the term $\Delta_{sc}$ because the infinite series of divergent integrals is $O(1)$ as $\omega\rightarrow 0$. Equation \eqref{st} has been used in obtaining new representations of various special functions, including sharper asymptotic behaviors of them \cite{galapon3,galapon4}, and new identities involving them \cite{galapon5}.

Let us recall the definition of the finite-part integral arising from the evaluation of the Stieltjes transform. If a locally integrable function $f(t)$ or any of its derivatives does not vanish at the origin, the integral
\begin{equation}\label{div}
	\int_0^a \frac{f(t)}{t^{\lambda}}\,\mathrm{d}t
\end{equation} 
is divergent for sufficiently large real part of $\lambda$ due to a non-integrable singularity at the origin. The finite-part of the divergent integral \eqref{div} is obtained by replacing the lower limit of integration by some small $\epsilon>0$ and decomposing the now convergent integral into the form
\begin{equation}\label{decomposition}
	\int_{\epsilon}^a \frac{f(t)}{t^{\lambda}}\, \mathrm{d}t = C_{\epsilon}+D_{\epsilon},
\end{equation}
where $C_{\epsilon}$ is the group of terms that converge in the limit as $\epsilon\rightarrow 0$, and $D_{\epsilon}$ is the group of terms that diverge in the same limit \cite{monegato,fox,hadamard}. The diverging terms, $D_{\epsilon}$, must consist only of diverging terms in algebraic powers of $\epsilon$ and $\ln\epsilon$. This requires that $f(t)$ can be developed as a Taylor expansion at the origin. 

The finite-part of the divergent integral is defined as the value of the limit of $C_{\epsilon}$ as $\epsilon$ approaches zero,
\begin{equation}\label{fpidef}
	\begin{split}
		\bbint{0}{a} \frac{f(t)}{t^{\lambda}}\,\mathrm{d}t = \lim_{\epsilon\rightarrow 0}C_{\epsilon}.
	\end{split}
\end{equation}
Equivalently, the finite-part is given by
\begin{equation}
	\bbint{0}{a}\frac{f(t)}{t^{\lambda}}\,\mathrm{d}t = \lim_{\epsilon\rightarrow 0}\left(\int_{\epsilon}^a \frac{f(t)}{t^{\lambda}}\,\mathrm{d}t-D_{\epsilon}\right), 
\end{equation}
following from the decomposition \eqref{decomposition} and the definition of the finite-part \eqref{fpidef}. When $a=\infty$ the finite-part is defined by the limit
\begin{equation}\label{fpidef2}
	\bbint{0}{\infty} \frac{f(t)}{t^{\lambda}}\,\mathrm{d}t = \lim_{a\rightarrow\infty} \bbint{0}{a} \frac{f(t)}{t^{\lambda}}\,\mathrm{d}t ,
\end{equation}
provided the limit exists. By definition the finite-part always exists and is finite. The definition does not exclude the possibility that the finite-part is equal to zero.  

In this paper we seek to establish the relationship between the finite-part integral
\begin{equation}
	\bbint{0}{a} \frac{f(t)}{t^{\lambda}}\,\mathrm{d}t ,\;\;\; \mathrm{Re}(\lambda)\geq 1, \;\;\; f(0)\neq 0 ,
\end{equation} and the Mellin transform of the function $f(t)$,
\begin{equation}\label{mt}
	\mathcal{M}_a[f(t);1-\lambda]=\int_0^a \frac{f(t)}{t^{\lambda}}\,\mathrm{d}t, \;\;\; d<\mathrm{Re}(\lambda)<1
\end{equation}
where $d<\mathrm{Re}(\lambda)<1$ is the strip of analyticity of the transform\footnote{By definition, the Mellin transform is an integration over the entire half-line, not in a finite interval. However, the integral \eqref{mt} can be interpreted as the Mellin transform of the function $f(t) H(a-t)$, where $H(x)$ is the Heaviside step function. For this reason, we refer to \eqref{mt} as a Mellin transform.}. Since the domains of the finite-part integral and the Mellin transform are disjoint, the relationship is to be established through the analytic continuation  of the Mellin transform, which we denote by
\begin{equation}
	\mathcal{M}_a^*[f(t);1-\lambda].
\end{equation} 
Examples exist in which the value of the analytic continuation and the finite part coincide at certain regular points of the analytic continuation. However, it is not clear that the equality is a general feature of the analytic continuation. One feature of the analytic continuation is that it may develop poles at the points where the Mellin integral is divergent; however, the finite-part always exists and is finite. In such a situation the value of the analytic continuation and the finite-part are not equal. The former is infinite or undefined while the later is finite. This indicates that the relationship between the analytic continuation and the finite-part is not straightforward.

We will study the relationship in the context of the finite-part integration of the Stieltjes integral 
\begin{equation}\label{probhere}
	\int_0^a\frac{h(t)}{(\omega+t)}\,\mathrm{d}t 
\end{equation}
where  
\begin{equation}\label{probhere2}
	h(t)=\sum_{k=0}^{\infty} \sum_{l=0}^{M(k)} k_{kl}(t) t^{-\nu_k} \ln^l t 
\end{equation}
in which the $k_{kl}(t)$'s are complex analytic in the interval of integration and the series converging uniformly in the same interval. Distribution of the integration leads to Stieltjes transforms of the form
\begin{equation}\label{components}
	\int_0^a \frac{k(t) \ln^n t}{t^{\nu}(\omega + t)}\,\mathrm{d}t ,\;\; 0\leq \mathrm{Re}(\nu)<1 .
\end{equation}
for $n=0, 1, 2,\dots$. And substituting the expansion \eqref{bino} back into \eqref{components} and performing term by term integration leads to the consideration of the divergent integrals
\begin{equation}\label{etona}
	\int_0^a \frac{k(t) \ln^n t}{t^{k+\nu+1}}\,\mathrm{d}t,
\end{equation}
for non-negative integer $k$. We will evaluate \eqref{probhere} through the Stieltjes transforms \eqref{components} by means of the finite-parts of the divergent integrals \eqref{etona}.

In the process, we establish the following results. First, under certain specific conditions to be specified later, we establish that the finite-part integral
\begin{equation}
	\bbint{0}{a} \frac{k(t) \ln^n}{t^{\lambda}}\, \mathrm{d}t,\;\; k(0)\neq 0,\;\;\mathrm{Re}(\lambda)\geq 1
\end{equation}
for all non-negative integer $n$ is completely determined by the analytic continuation of the Mellin transform 
\begin{equation}\label{mtmt}
	\int_0^a \frac{k(t)}{t^{\lambda}}\,\mathrm{d}t .
\end{equation}   
Second, we show that the explicit evaluation of the component Stieltjes transforms \eqref{components} for every positive integer $n$ in the full range of the parameter $\nu$ can be generated from the evaluation of the Stieltjes transform \eqref{stst}. To accomplish these we will need to introduce a generalization of the Cauchy limit that allows meaningful assignment of values to a holomorphic function at its isolated singularities. We will refer to the generalization as the regularized limit. The regularized limit will allow us to take meaningful limit where the Cauchy limit leads to a division by zero.  

We will show that repeated differentiation and application of the regularized limit leads to exact evaluation of the Stieltjes transform \eqref{components} in terms of finite-parts of the divergent integrals \eqref{etona}. Moreover, the results yield explicit expressions for the missing terms which carry the dominant behavior of the Stieltjes transform as $\omega$ approaches zero; for example, we will find the following asymptotic relations
\begin{equation}\label{asy1}
	\int_0^a \frac{k(t)\ln t}{t^{\nu} (\omega+t)}\,\mathrm{d}t \sim \frac{\pi k(0)}{\omega^{\nu}} \left(\pi \cot(\pi\nu)+ \ln\omega\right),\;\; \omega\rightarrow 0,
\end{equation} 
\begin{equation}\label{asy2}
	\int_0^a \frac{k(t)\ln t}{ (\omega+t)}\,\mathrm{d}t \sim -\frac{k(0)}{2} \ln^2\omega,\;\; \omega\rightarrow 0,
\end{equation} 
for all $k(t)$ that are complex analytic in the interval of integration and $k(0)\neq 0$ and for $\omega>0$. Observe that \eqref{asy2} cannot be continuously obtained from \eqref{asy1} using the usual Cauchy limit as $\nu\rightarrow 0$ because the right hand side would diverge; however, we will find that \eqref{asy2} is the regularized limit of \eqref{asy1}.

The rest of the paper is organized in three Parts. 

Part-I constitutes Sections-\ref{regularizedlim} and-\ref{genhospital}. In Section-\ref{regularizedlim} we define the regularized limit and establish its relationship with the usual notion of limit due to Cauchy. There we establish its most important property of being linear, a property not shared by the Cauchy limit. Much of our development will depend on this linear property of the regularized limit. In Section-\ref{genhospital} we obtain the explicit formula for the regularized limit of rational functions leading to division by zero with respect to the Cauchy limit. We will arrive at an expression generalizing the well-known L'Hospital's rule. 

Part-II constitutes Sections-\ref{nonlog} and-\ref{powerlog}. In Section-\ref{nonlog} we revisit the non-logarithmic case and obtain the relationship between the analytic continuation of the Mellin transform and the finite-part integral. There we give a first principle derivation of the contour integral representations of the finite-part integrals only intuited in \cite{galapon2}. In Section-\ref{powerlog} we establish the relationship between the analytic continuation of the Mellin transform in the presence of power logarithmic singularities and the finite-part integral. We will find that the finite-part integral in the full range of $\nu$ can be obtained from the contour integral representation of the finite-part integral for the non-logarithmic case. 

Part-III constitutes Sections-\ref{nonlogcase}, \ref{logcase} and \ref{combination}. In Section-\ref{nonlogcase} we revisit the finite-part integration of the Stieltjes transform for the non-logarithmic case considered in \cite{galapon2}. There we will rederive our results in a completely different manner based on analytic continuation. In Section-\ref{logcase} we evaluate the Stieltjes transform involving arbitrary integer power of the logarithm given by \eqref{components}, completing the ingredients in evaluating the Stieltjes transform for equation \eqref{probhere}. Finally, in Section-\ref{combination} we demonstrate the evaluation of the Stieltjes transform for a class of functions involving linear logarithmic singularity at the origin.

In this paper, $\log z$ denotes the complex logarithm  which is given by $\log z=\ln|z|+\arg{z}$, where $0\leq \arg{z}<2\pi$; $\log z$ coincides with $\ln t$ on top of the positive real axis. On the other hand, $\operatorname{Log}z$ denotes the complex principal value logarithm which is given by $\operatorname{Log}z=\ln|z|+\operatorname{Arg}z$, where $-\pi< \operatorname{Arg}z\leq \pi$; $\operatorname{Log} z$ analytically continues the natural logarithm $\ln t$ away from the positive real axis into the complex plane.

\section*{Part I}
\section{The Regularized Limit at an Isolated Singularity}\label{regularizedlim}

In the development to follow, we will face the problem of extracting meaningful information from the analytic continuation of the Mellin transform at its isolated singularities. Except at removable singularities, the usual limit due to Cauchy yields infinities or is undefined at isolated singularities. In the case of poles, the limit reduces to a division by zero. In this Section, we generalize the concept of the Cauchy limit with the generalization possessing the properties that it assigns a value equal to the Cauchy limit at regular points and removable singularities, and that it assigns a unique finite value at poles and essential singularities.  

Let $w(\lambda)$ be a function of the complex variable $\lambda$ analytic in some domain $D\subseteq\mathbb{C}$ except at some isolated points interior to $D$. If $\lambda_0$ is an interior point of $D$, then either $\lambda_0$ is a regular point or an isolated singularity of $w(\lambda)$. If $\lambda_0$ is an isolated singularity of $w(\lambda)$ in $D$, removable or essential or pole, then it is known that there exists a deleted neighborhood $\delta_{\lambda_0}=D(r,\lambda_0)\setminus\{\lambda_0\}$, where $D(r,\lambda_0)$ is an open disk of radius $r$ centered at $\lambda_0$, such that $w(\lambda)$ assumes the Laurent series expansion
\begin{equation}
w(\lambda)=\sum_{k=-\infty}^{\infty} a_k (\lambda-\lambda_0)^k ,
\end{equation}
for all $\lambda$ in $\delta_{\lambda_0}$. The radius $r$ is bounded by  the distance to the nearest singularity of $w(\lambda)$ from $\lambda_0$. The $a_k$'s are constants independent of $\lambda$ and are given by 
\begin{equation}\label{integrep}
a_k= \frac{1}{2\pi i}\oint_{|\lambda-\lambda_0|=\rho} \frac{w(\lambda)}{(\lambda-\lambda_0)^{k+1}}\,\mathrm{d}\lambda 
\end{equation}
where $\rho<r$. 
If all the coefficients for the negative powers of $(\lambda-\lambda_0)$ vanish, $a_{-1}=a_{-2}=\dots = 0$, the singularity $\lambda_0$ is a removable singularity; otherwise, $\lambda_0$ is either  an essential singularity or a pole. If $\lambda_0$ is an essential singularity, there are infinitely many negative powers in the expansion; if it is a pole, there are only finite number of negative powers and the largest power is the order of the pole.

The Laurent series can be decomposed into its principal part, which diverges as $\lambda\rightarrow\lambda_0$, and its regular part, which converges in the same limit. We designate them by $w_P(\lambda|\lambda_0)$ and $w_R(\lambda|\lambda_0)$, respectively, and are given by
\begin{equation}\label{pp}
w_P(\lambda|\lambda_0)=\sum_{k=1}^{\infty} \frac{a_{-k}}{ (\lambda-\lambda_0)^k}, 
\end{equation}
\begin{equation}\label{rp}
w_R(\lambda|\lambda_0)=\sum_{k=0}^{\infty} a_{k} (\lambda-\lambda_0)^k
\end{equation}
both converging for all $\lambda$ in $\delta_{\lambda_0}$. Within the deleted neighborhood, $w(\lambda)$ assumes the decomposition
\begin{equation}
    w(\lambda)=w_P(\lambda|\lambda_0) + w_R(\lambda|\lambda_0), \;\;\; \lambda \in \delta_{\lambda_0}.
\end{equation}

Now we introduce the concept of regularized limit of an analytic function at its isolated singularity.
\begin{definition}
	Let $\lambda_0$ be an isolated singularity of the function $w(\lambda)$. The regularized limit of the function $w(\lambda)$ as $\lambda\rightarrow\lambda_0$, to be denoted by
	\begin{equation}
	\reglim{\lambda}{\lambda_0}w(\lambda), \nonumber 
	\end{equation}
	 is defined by
	\begin{equation}
	\reglim{\lambda}{\lambda_0} w(\lambda) = \lim_{\lambda\rightarrow\lambda_0} \left[w(\lambda)-w_P(\lambda|\lambda_0)\right],
	\end{equation}
	where the limit in the right hand side is the limit in the usual sense of Cauchy. 
\end{definition} 
\begin{theorem}\label{representations}
	Let $\lambda_0$ be an isolated singularity of $w(\lambda)$. Then the regularized limit at $\lambda_0$ always exists and is unique. The value is given by 
	\begin{equation}\label{anaught}
	\reglim{\lambda}{\lambda_0} w(\lambda) =a_0
	\end{equation}
	where $a_0$ is the constant term in the regular-part,  $w_R(\lambda|\lambda_0)$, of $w(\lambda)$. Moreover, it admits the contour integral representation
	\begin{equation}\label{crep}
		\reglim{\lambda}{\lambda_0} w(\lambda)=\frac{1}{2\pi i} \oint_{|\lambda-\lambda_0|=\rho} \frac{w(\lambda)}{(\lambda-\lambda_0)}\, \mathrm{d}\lambda 
	\end{equation}
for sufficiently small $\rho$.
\end{theorem}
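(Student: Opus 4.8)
The plan is to prove the three assertions — existence/uniqueness, the identity $\reglim{\lambda}{\lambda_0} w(\lambda) = a_0$, and the contour integral representation \eqref{crep} — essentially in one stroke, reading everything off the Laurent expansion that we are granted exists in the deleted neighborhood $\delta_{\lambda_0}$. First I would write $w(\lambda) = w_P(\lambda|\lambda_0) + w_R(\lambda|\lambda_0)$ on $\delta_{\lambda_0}$, so that by definition $w(\lambda) - w_P(\lambda|\lambda_0) = w_R(\lambda|\lambda_0) = \sum_{k=0}^{\infty} a_k(\lambda-\lambda_0)^k$. Since this power series has positive radius of convergence (bounded below by the distance to the nearest singularity, exactly as for the full Laurent series), it defines a function holomorphic at $\lambda_0$, hence continuous there, so the Cauchy limit $\lim_{\lambda\to\lambda_0} w_R(\lambda|\lambda_0)$ exists and equals the value of the series at $\lambda = \lambda_0$, namely $a_0$. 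This simultaneously establishes that the regularized limit exists and that it equals $a_0$.

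For uniqueness I would note that the decomposition of the Laurent series into principal and regular parts is itself unique: the $a_k$ are uniquely determined by $w$ via the integral formula \eqref{integrep}, and the splitting at $k=0$ is fixed by the definition, so $w_P(\lambda|\lambda_0)$ — and therefore the quantity $w(\lambda) - w_P(\lambda|\lambda_0)$ whose Cauchy limit we take — is unambiguously defined. Thus the regularized limit is a well-defined single value. It is also worth remarking, to connect with the stated motivation, that when $\lambda_0$ is a regular point or a removable singularity all negative coefficients vanish, $w_P \equiv 0$, and the regularized limit collapses to the ordinary Cauchy limit $\lim_{\lambda\to\lambda_0} w(\lambda) = a_0$, so the regularized limit genuinely extends the Cauchy limit.

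For the contour integral representation I would invoke \eqref{integrep} directly with $k=0$: for any $\rho < r$,
\begin{equation}
a_0 = \frac{1}{2\pi i}\oint_{|\lambda-\lambda_0|=\rho} \frac{w(\lambda)}{(\lambda-\lambda_0)}\,\mathrm{d}\lambda,
\end{equation}
and combining this with the already-proved identity $\reglim{\lambda}{\lambda_0} w(\lambda) = a_0$ gives \eqref{crep}. I would add one sentence explaining why the contour integral is independent of $\rho$ in the admissible range: the integrand $w(\lambda)/(\lambda-\lambda_0)$ is holomorphic on the annulus $0 < |\lambda-\lambda_0| < r$, so by Cauchy's theorem two concentric circles in that annulus give the same value — this is in fact the standard argument underpinning formula \eqref{integrep} and I would just cite it rather than reprove it.

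I do not anticipate a serious obstacle here; the statement is essentially a repackaging of the uniqueness and convergence properties of Laurent series, and the only things to be careful about are (i) making explicit that $w_R(\lambda|\lambda_0)$ extends holomorphically across $\lambda_0$ so that the Cauchy limit defining the regularized limit actually exists — this is where the "always exists" claim has its content — and (ii) being precise that "unique" refers to the value, which follows from the uniqueness of the Laurent coefficients. The mild subtlety, if any, is purely expository: emphasizing that the subtraction of $w_P$ is exactly the right amount to cancel all divergent behavior while adding nothing that vanishes in the limit, so that no information is lost or spuriously introduced, which is what makes $a_0$ the "natural" regularized value.
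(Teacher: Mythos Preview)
Your proof is correct and follows essentially the same route as the paper's: identify $w - w_P$ with $w_R$ on the deleted neighborhood, take the Cauchy limit to get $a_0$, and read off the contour integral representation from the Laurent coefficient formula \eqref{integrep} at $k=0$. The paper's version is terser, omitting your explicit remarks on uniqueness and on the independence of $\rho$, but the substance is identical.
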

\begin{proof}
	In a deleted neighborhood of $\lambda_0$, $w(\lambda)-w_P(\lambda)|\lambda_0)=w_R(\lambda|\lambda_0)$. Then $\lim_{\lambda\rightarrow\lambda_0} \left[w(\lambda)-w_P(\lambda|\lambda_0)\right]=\lim_{\lambda\rightarrow\lambda_0}w_R(\lambda|\lambda_0)=a_0$. The integral representation follows directly from the integral representation of the coefficients of the Laurent expansion of $w(\lambda)$ and corresponds to $k=0$ in equation \eqref{integrep}. 
\end{proof}

It is important to emphasize that the definition of the regularized limit requires the Laurent expansion in a deleted neighborhood of the isolated singularity. The reason is that the Laurent expansion of a function is not unique, with the different expansions corresponding to different annuli centered at the singularity bounded by circles with radii given by the distances of two consecutive isolated singularities. For example, the function
$1/\lambda(\lambda+1)(\lambda+2)$ has the Laurent expansion
\begin{equation}\label{exp1}
	\frac{1}{\lambda(\lambda+1)(\lambda+2)}=-\frac{1}{2\lambda}+\sum_{k=2}^{\infty} \frac{(-1)^k}{\lambda^{k}}+\sum_{k=0}^{\infty} (-1)^k \frac{\lambda^{k}}{2^{k+2}}
\end{equation}
which is only valid in the annulus $1<|\lambda|<2$. Disregarding the annulus of analyticity of the expansion, the expansion gives the impression that $\lambda=0$ is an essential singularity which is contrary to the fact that the function has only a simple pole at the origin. Also it gives the regularized limit equal to $-1/4$. On the other hand, in the neighborhood of $\lambda=0$, we have the Laurent expansion
\begin{equation}
	\frac{1}{\lambda(\lambda+1)(\lambda+2)}=\frac{1}{2\lambda}-\sum_{k=0}^{\infty}(-1)^k \frac{(2^{k+2}-1)}{2^{k+2}}\lambda^{k}
\end{equation}
which is valid for all $0<|\lambda|<1$. This is the desired expansion in a deleted neighborhood of the origin. This yields the correct value at the origin and it yields the regularized limit $-3/4$. This value is different from the value arising from the expansion \eqref{exp1}. 

The following regularized limit will be useful to us in establishing the relationship between the analytic continuation of the Mellin transform and the finite-part integral.
\begin{proposition}
	If $w(\lambda)$ is analytic at $\lambda=\lambda_0$, then
	\begin{equation}\label{coco}
		\reglim{\lambda}{\lambda_0}\frac{w(\lambda)}{(\lambda-\lambda_0)^n} = \frac{w^{(n)}(\lambda_0)}{n!} ,
	\end{equation}
for all positive integer $n$.
\end{proposition}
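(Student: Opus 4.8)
The plan is to reduce the claim directly to Theorem~\ref{representations}, which says the regularized limit equals the constant term $a_0$ in the regular part of the Laurent expansion taken in a \emph{deleted} neighborhood of the singularity. Since $w(\lambda)$ is analytic at $\lambda=\lambda_0$, it has an ordinary Taylor expansion
\begin{equation}
w(\lambda)=\sum_{j=0}^{\infty} \frac{w^{(j)}(\lambda_0)}{j!}(\lambda-\lambda_0)^j
\end{equation}
valid in some disk $D(r,\lambda_0)$. Dividing by $(\lambda-\lambda_0)^n$ gives, for $0<|\lambda-\lambda_0|<r$,
\begin{equation}
\frac{w(\lambda)}{(\lambda-\lambda_0)^n}=\sum_{j=0}^{\infty} \frac{w^{(j)}(\lambda_0)}{j!}(\lambda-\lambda_0)^{j-n},
\end{equation}
which is precisely the Laurent expansion of $w(\lambda)/(\lambda-\lambda_0)^n$ in the deleted neighborhood $\delta_{\lambda_0}$. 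One then has to treat the trivial sub-case: if $w^{(j)}(\lambda_0)=0$ for all $j<n$ (in particular if $w\equiv 0$, or if $w$ vanishes to order $\geq n$ at $\lambda_0$), the point is a removable singularity and formula~\eqref{coco} still reads correctly since both sides are $w^{(n)}(\lambda_0)/n!$; no separate argument is needed because Theorem~\ref{representations} covers removable singularities as well.

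The key step is to identify which coefficient in this Laurent series is $a_0$, i.e. the coefficient of $(\lambda-\lambda_0)^0$. Setting $j-n=0$ gives $j=n$, so $a_0=w^{(n)}(\lambda_0)/n!$. By Theorem~\ref{representations},
\begin{equation}
\reglim{\lambda}{\lambda_0}\frac{w(\lambda)}{(\lambda-\lambda_0)^n}=a_0=\frac{w^{(n)}(\lambda_0)}{n!},
\end{equation}
which is exactly \eqref{coco}. Alternatively, one can invoke the contour-integral representation \eqref{crep}: the regularized limit equals $\frac{1}{2\pi i}\oint_{|\lambda-\lambda_0|=\rho} w(\lambda)(\lambda-\lambda_0)^{-n-1}\,\mathrm{d}\lambda$, which by Cauchy's integral formula for derivatives is $w^{(n)}(\lambda_0)/n!$; this gives the same conclusion and doubles as a sanity check.

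There is essentially no main obstacle here — the proposition is a direct corollary of Theorem~\ref{representations} combined with the uniqueness of the Taylor/Laurent expansion in the relevant annulus $0<|\lambda-\lambda_0|<r$. The only subtlety worth a sentence in the write-up is the one already flagged in the discussion following the examples in the text: one must use the Laurent expansion valid in a \emph{deleted neighborhood} of $\lambda_0$ (the innermost annulus), not some other annulus, for the identification $a_0=w^{(n)}(\lambda_0)/n!$ to be legitimate. Since $w$ is analytic at $\lambda_0$, its Taylor series automatically furnishes exactly this expansion, so the hypothesis of the proposition guarantees we are in the right annulus and the computation is unambiguous.
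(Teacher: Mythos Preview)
Your proof is correct and follows essentially the same approach as the paper: expand $w(\lambda)$ in its Taylor series about $\lambda_0$, divide by $(\lambda-\lambda_0)^n$, and read off the constant term of the resulting Laurent expansion as $w^{(n)}(\lambda_0)/n!$. Your write-up is simply more detailed than the paper's one-line proof, and the alternative via the contour representation and Cauchy's formula is a nice (though inessential) addition.
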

\begin{proof}
	We expand $w(\lambda)$ about $\lambda=\lambda_0$ and obtain the leading term of the regular part of $w(\lambda)/(\lambda-\lambda_0)^n$ which is just the left hand side of \eqref{coco}.
\end{proof}

A desired property of the regularized limit is that it generalizes the Cauchy limit and that it yields the same limit as Cauchy's when it exists. Indeed the regularized limit does  so and is well-defined where the Cauchy limit does not exist.
\begin{theorem} \label{equality0} 
	The equality
	\begin{equation}\label{equality}
	\reglim{\lambda}{\lambda_0} w(\lambda)=\lim_{\lambda\rightarrow\lambda_0}w(\lambda),
	\end{equation}
	holds if and only if $\lambda_0$ is a removable singularity or a regular point of $w(\lambda)$.
\end{theorem}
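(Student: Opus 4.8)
The plan is to prove the biconditional by characterizing exactly when the regularized limit agrees with the Cauchy limit, using the Laurent-series description established in Theorem \ref{representations}. By that theorem, $\reglim{\lambda}{\lambda_0} w(\lambda) = a_0$, the constant term of the regular part $w_R(\lambda|\lambda_0)$, so the question reduces to asking when $\lim_{\lambda\to\lambda_0} w(\lambda)$ exists and equals $a_0$.

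For the ``if'' direction, suppose $\lambda_0$ is a regular point or a removable singularity. Then all principal-part coefficients vanish, so $w_P(\lambda|\lambda_0)\equiv 0$ and $w(\lambda) = w_R(\lambda|\lambda_0) = \sum_{k\geq 0} a_k(\lambda-\lambda_0)^k$ in a deleted neighborhood of $\lambda_0$. Since this power series converges on a disk about $\lambda_0$, it is continuous there, so the Cauchy limit exists and equals $a_0$; hence $\lim_{\lambda\to\lambda_0} w(\lambda) = a_0 = \reglim{\lambda}{\lambda_0} w(\lambda)$.

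For the ``only if'' direction I would argue by contraposition: assume $\lambda_0$ is an isolated singularity that is \emph{not} removable, i.e.\ a pole or an essential singularity, and show the Cauchy limit cannot equal $a_0$ — indeed it fails to exist as a finite complex number. Write $w(\lambda) = w_P(\lambda|\lambda_0) + w_R(\lambda|\lambda_0)$; since $w_R \to a_0$, the Cauchy limit of $w$ exists (finitely) if and only if $\lim_{\lambda\to\lambda_0} w_P(\lambda|\lambda_0)$ exists finitely. But $w_P$ is nonzero with at least one negative-power term. If $\lambda_0$ is a pole of order $m\geq 1$, then $(\lambda-\lambda_0)^m w_P(\lambda|\lambda_0) \to a_{-m}\neq 0$, so $|w_P(\lambda|\lambda_0)|\to\infty$ and the Cauchy limit is infinite. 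If $\lambda_0$ is an essential singularity, then by the Casorati–Weierstrass theorem $w_P$ (equivalently $w$, since $w_R$ is bounded near $\lambda_0$) comes arbitrarily close to every complex value in every deleted neighborhood, so $\lim_{\lambda\to\lambda_0} w(\lambda)$ does not exist. In either case equation \eqref{equality} fails, completing the contrapositive.

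The only mildly delicate point is handling the essential-singularity case cleanly: one must invoke Casorati–Weierstrass (or Picard) for $w$ itself rather than for $w_P$ alone, which is immediate because $w - w_R = w_P$ and $w_R$ is bounded on a sufficiently small deleted neighborhood, so $w$ has dense range there precisely when $w_P$ does. Everything else is a direct unpacking of the Laurent decomposition \eqref{pp}–\eqref{rp} together with Theorem \ref{representations}; no new machinery is required.
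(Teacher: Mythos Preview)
Your argument is correct and follows essentially the same route as the paper's own proof: both directions hinge on the Laurent decomposition, with the principal part vanishing exactly at regular or removable points, and diverging or failing to have a limit at poles and essential singularities. Your treatment is in fact slightly more explicit than the paper's, which simply asserts that the Cauchy limit is ``infinite or undefined'' in the non-removable case, whereas you spell out the pole case via $(\lambda-\lambda_0)^m w_P \to a_{-m}$ and invoke Casorati--Weierstrass for the essential case.
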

\begin{proof}
	If $\lambda_0$ is a removable singularity or a regular point of $w(\lambda)$, the principal part of the Laurent series expansion of $w(\lambda)$ in a deleted neighborhood of $\lambda_0$ is identically zero and its regular part is just its Taylor series expansion about $\lambda_0$. Then the equality of the limits in \eqref{equality} follows. On the other hand, if $\lambda_0$ is not removable so that it is either a pole or an essential singularity, the principal part of $w(\lambda)$ does not identically vanish and the value of the Cauchy limit of $w(\lambda)$ at $\lambda_0$ is either infinite or undefined,  while the regularized limit is finite and well defined there.  
\end{proof}

The equality afforded by Theorem-\ref{equality0} and the finiteness of the regularized limit everywhere motivates introducing the concept of a regularized version of an analytic function $w(\lambda)$.
\begin{definition}
	Let $w(\lambda)$ be analytic in some open domain $D$ except perhaps at some isolated points in $D$. The regularized-$w(\lambda)$, to be denoted by
	\begin{equation}
		\overset{\times}{w}(\lambda),
	\end{equation} 
is the function of the complex variable $\lambda$ given by
	\begin{equation}
	\overset{\times}{w}(\lambda)=\reglim{\lambda'}{\lambda}w(\lambda'),
\end{equation}
for all $\lambda$ in $D$. 
\end{definition}
\noindent As defined,  $w(\lambda)$ and its regularized version $\overset{\times}{w}(\lambda)$ differ on a set of measure zero. In particular, they are equal everywhere where $w(\lambda)$ is analytic and differ at the isolated non-removable singularities of $w(\lambda)$.  In Part-II of the paper, we shall establish that the finite-part integral, defined as a function in the complex plane, is the regularized version of the analytic continuation of the Mellin transform. 

In general the Cauchy limit cannot be distributed over a sum, i.e. it is not linear. On the other hand, the regularized limit is linear.
\begin{theorem}\label{twoterm}
	Let $\lambda_0$ be an isolated singularity of $w_1(\lambda)$ and $w_2(\lambda)$. Then
	\begin{equation}\label{linearity}
	\reglim{\lambda}{\lambda_0}[\alpha_1 w_1(\lambda)+\alpha_2 w_2(\lambda)] = \alpha_1\reglim{\lambda}{\lambda_0}w_1(\lambda)+\alpha_2 \reglim{\lambda}{\lambda_0}w_2(\lambda), 
	\end{equation}
	for all complex numbers $\alpha_1$ and $\alpha_2$. 
\end{theorem}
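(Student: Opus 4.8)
The plan is to reduce the assertion to the linearity of a contour integral, via the representation \eqref{crep} of Theorem~\ref{representations}, after first arranging a single contour that works simultaneously for $w_1$, $w_2$, and their combination. First I would fix a common punctured disk: since $\lambda_0$ is an isolated singularity of $w_1$, there is $r_1>0$ with $w_1$ analytic on $0<|\lambda-\lambda_0|<r_1$, and likewise $r_2>0$ for $w_2$. Putting $r=\min(r_1,r_2)$, all of $w_1$, $w_2$, and $\alpha_1 w_1+\alpha_2 w_2$ are analytic on $0<|\lambda-\lambda_0|<r$, so $\lambda_0$ is either an isolated singularity of the combination or — after possible cancellation of principal parts — a removable singularity or a regular point of it; in each case the regularized limit of the combination is well defined.

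I would then pick any $\rho$ with $0<\rho<r$ and apply \eqref{crep} on the fixed circle $|\lambda-\lambda_0|=\rho$ to each of the three functions, using that the ordinary line integral is linear:
\begin{align*}
	\reglim{\lambda}{\lambda_0}[\alpha_1 w_1(\lambda)+\alpha_2 w_2(\lambda)]
	&= \frac{1}{2\pi i}\oint_{|\lambda-\lambda_0|=\rho}\frac{\alpha_1 w_1(\lambda)+\alpha_2 w_2(\lambda)}{\lambda-\lambda_0}\,\mathrm{d}\lambda\\
	&= \alpha_1\,\frac{1}{2\pi i}\oint_{|\lambda-\lambda_0|=\rho}\frac{w_1(\lambda)}{\lambda-\lambda_0}\,\mathrm{d}\lambda
	 + \alpha_2\,\frac{1}{2\pi i}\oint_{|\lambda-\lambda_0|=\rho}\frac{w_2(\lambda)}{\lambda-\lambda_0}\,\mathrm{d}\lambda\\
	&= \alpha_1\reglim{\lambda}{\lambda_0}w_1(\lambda)+\alpha_2\reglim{\lambda}{\lambda_0}w_2(\lambda),
\end{align*}
which is precisely \eqref{linearity}. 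Equivalently, and perhaps more transparently, I could avoid contours altogether: on the common punctured disk $0<|\lambda-\lambda_0|<r$ the Laurent expansion is unique, so the Laurent coefficients of $\alpha_1 w_1+\alpha_2 w_2$ are $\alpha_1 a_k^{(1)}+\alpha_2 a_k^{(2)}$; taking $k=0$ and invoking \eqref{anaught} gives \eqref{linearity} at once. I would likely present this Laurent-coefficient argument as the main proof, since it handles all cases uniformly, and keep the contour-integral computation as a remark.

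The only point that needs care — and the one I would flag as the main (minor) obstacle — is the degenerate case in which the combination has no genuine non-removable singularity at $\lambda_0$, so that Theorem~\ref{representations} as literally stated does not apply to it. In that situation I would note that, by Theorem~\ref{equality0}, the regularized limit of $\alpha_1 w_1+\alpha_2 w_2$ equals its ordinary Cauchy limit, i.e. the value at $\lambda_0$ of its (analytically continued) holomorphic extension, and by the Cauchy integral formula this value is again $\tfrac{1}{2\pi i}\oint_{|\lambda-\lambda_0|=\rho}(\alpha_1 w_1+\alpha_2 w_2)/(\lambda-\lambda_0)\,\mathrm{d}\lambda$, so the displayed chain of equalities is unaffected; under the Laurent-coefficient formulation this case is subsumed automatically, since there $a_0$ is simply the Taylor constant term. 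The contrast with the Cauchy limit — which is genuinely non-linear because $\infty+(-\infty)$ or undefined-plus-undefined need not simplify — is exactly what is being circumvented: subtracting the principal parts before taking the limit is a linear operation, and that is the whole content of the theorem.
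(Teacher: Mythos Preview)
Your proof is correct and follows the same approach as the paper: choose a common punctured disk, invoke the contour-integral representation \eqref{crep} on a fixed circle, and use linearity of the integral. Your treatment is more detailed---in particular, you explicitly handle the degenerate case where the combination has a removable singularity at $\lambda_0$, which the paper glosses over---and your alternative Laurent-coefficient argument via \eqref{anaught} is a clean equivalent.
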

\begin{proof}
	Since $\lambda_0$ is an isolated singularity for both $w_1(\lambda)$ and $w_2(\lambda)$, there exists a common deleted neighborhood $\delta_{\lambda_0}$ of $\lambda_0$ where they are both analytic. Then for a closed $C$ entirely lying in $\delta_{\lambda_0}$ and encircling $\lambda_0$, the sum $(w_1+w_2)(\lambda)$ can be substituted in the right hand side of the contour integral representation \eqref{crep} for $w(\lambda)$ and the integration distributed, giving equation \eqref{linearity}. 
\end{proof}

A stronger version of the linearity of the regularized limit is given by the following result.
\begin{theorem}\label{sequence}
	Let $w_0(\lambda)$, $w_1(\lambda)$, $w_2(\lambda)$, $\dots$ be an infinite sequence of functions of the complex variable $\lambda$, all of which analytic except perhaps at some isolated points in some region $D$. Let $\lambda_0$ be an interior point of $D$, and let the infinite series $\sum_{k=0}^{\infty} w_k(\lambda)$ converge uniformly in some deleted neighborhood, $D(r,\lambda_0)$, of $\lambda_0$. Then
	\begin{equation}\label{sisi}
	\reglim{\lambda}{\lambda_0}\sum_{k=0}^{\infty} w_k(\lambda)=\sum_{k=0}^{\infty} \reglim{\lambda}{\lambda_0}w_k(\lambda) .
	\end{equation} 
\end{theorem}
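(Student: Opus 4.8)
The plan is to reduce the statement about the infinite series to the two-term linearity already established in Theorem~\ref{twoterm}, together with a uniform-convergence argument that lets me pass the contour integral representation \eqref{crep} through the sum. First I would fix a radius $\rho<r$ small enough that the circle $|\lambda-\lambda_0|=\rho$ lies in the deleted neighborhood $D(r,\lambda_0)$ on which $\sum_k w_k(\lambda)$ converges uniformly, and small enough that each $w_k$ is analytic on that circle (this is possible since each $w_k$ has only isolated singularities, and $\lambda_0$ is the only one that matters near $\lambda_0$; I may need to shrink $\rho$ to avoid the other singularities of finitely many $w_k$, but since $\rho$ can be taken arbitrarily small and the circle is compact, uniform convergence on $D(r,\lambda_0)$ guarantees a good choice). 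On this circle the function $\lambda\mapsto 1/(\lambda-\lambda_0)$ is bounded, so $\sum_k w_k(\lambda)/(\lambda-\lambda_0)$ still converges uniformly there.

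The key computation is then
\begin{equation}
\reglim{\lambda}{\lambda_0}\sum_{k=0}^{\infty} w_k(\lambda)
=\frac{1}{2\pi i}\oint_{|\lambda-\lambda_0|=\rho}\frac{1}{\lambda-\lambda_0}\sum_{k=0}^{\infty} w_k(\lambda)\,\mathrm{d}\lambda
=\sum_{k=0}^{\infty}\frac{1}{2\pi i}\oint_{|\lambda-\lambda_0|=\rho}\frac{w_k(\lambda)}{\lambda-\lambda_0}\,\mathrm{d}\lambda
=\sum_{k=0}^{\infty}\reglim{\lambda}{\lambda_0}w_k(\lambda),
\end{equation}
where the first and last equalities are Theorem~\ref{representations}, and the middle one is the interchange of summation and integration justified by uniform convergence on the compact contour. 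A preliminary point to check is that $\sum_k w_k(\lambda)$, being a uniform limit of functions analytic on the punctured disk $\delta_{\lambda_0}$, is itself analytic there, so $\lambda_0$ is at worst an isolated singularity of the sum and the left-hand regularized limit is well defined; this is a standard Weierstrass-type argument.

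The main obstacle I anticipate is purely bookkeeping: making sure the single radius $\rho$ works simultaneously for the whole sequence $\{w_k\}$, i.e. that all $w_k$ are analytic on and inside the chosen circle except possibly at $\lambda_0$. Uniform convergence is only assumed on $D(r,\lambda_0)$, so a priori the individual $w_k$ could have singularities accumulating toward $\lambda_0$; one must argue that uniform convergence on the punctured disk forces, for each fixed $k$, no singularities of $w_k$ inside some fixed smaller punctured disk — but actually this does not follow from uniform convergence alone, so the cleanest route is to note that the hypothesis ``$\sum w_k$ converges uniformly in a deleted neighborhood of $\lambda_0$'' should be read as each $w_k$ being analytic on that deleted neighborhood (which is the natural reading, since otherwise the terms are not even defined there), and then every $w_k$ is analytic on the full punctured disk $D(r,\lambda_0)\setminus\{\lambda_0\}$ and any $\rho<r$ works. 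With that reading the proof is exactly the three-line contour computation above; without it one would additionally invoke that a convergent Laurent/power series interchange is legitimate term by term, which is the same uniform-convergence fact applied on the circle.
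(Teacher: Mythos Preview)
Your proof is correct and follows essentially the same route as the paper: apply the contour integral representation \eqref{crep} to the sum, interchange summation and integration by uniform convergence on the circle, and recognize each resulting contour integral as $\reglim{\lambda}{\lambda_0}w_k(\lambda)$. The paper's proof is the same three-line computation without your additional (reasonable) discussion of the choice of $\rho$ and the analyticity of the individual $w_k$ on the punctured disk.
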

\begin{proof} 
Applying the contour integral representation \eqref{crep} of the regularized limit on the left hand side of \eqref{sisi} and distributing the integration over the sum leads to the right hand side. The term by term integration of the infinite series is justified because the series converges uniformly in $D(r,\lambda_0)$.
\end{proof}
\noindent In Theorem-\ref{sequence}, $\lambda_0$ need not be a common isolated singularity of the $w_k(\lambda)$'s. Some of the elements of the sequence may be analytic at $\lambda_0$ or altogether analytic there.

A consequence of Theorem-\ref{equality0} is that we can replace the Cauchy limit wherever it appears with the regularized limit. The linearity of the later afforded by Theorems-\ref{twoterm} and-\ref{sequence} allows term by term application of the regularized limit which is generally not possible with the Cauchy limit. For example, we have by the usual limit
\begin{equation}\label{poy}
	\lim_{\lambda\rightarrow 0}\frac{e^{\lambda}-e^{-\lambda}}{\lambda}=2 .
\end{equation}
We cannot distribute the limit because doing so leads to a division by zero. However, we can replace the Cauchy limit with the regularized limit and appeal to the linearity of the later to have
\begin{equation}\label{qwe}
	\lim_{\lambda\rightarrow 0}\frac{e^{\lambda}-e^{-\lambda}}{\lambda}= \reglim{\lambda}{0}\frac{e^{\lambda}}{\lambda}-\reglim{\lambda}{0}\frac{e^{-\lambda}}{\lambda}.
\end{equation}
Both limits in the right hand side are well-defined and are given by
\begin{equation}
	\reglim{\lambda}{0}\frac{e^{\pm\lambda}}{\lambda}=\pm 1,
\end{equation} 
according to equation \eqref{coco}. Substituting these values back into equation \eqref{qwe}, we reproduce the result \eqref{poy}. We will see that much of the development to follow depends on the linear property of the regularized limit. 

\section{Generalized L'Hospital's Rule}\label{genhospital}
Later we will encounter the problem of evaluating the regularized limit of the rational function $f(\lambda)/g(\lambda)$ as $\lambda$ approaches some value $\lambda_0$, where $f(\lambda)$ and $g(\lambda)$ are both analytic at $\lambda_0$, with $f(\lambda_0)\neq 0$ and $g(\lambda_0)=0$. In this Section, we derive the explicit formula for the limit,
\begin{equation}\label{hehi}
	\reglim{\lambda}{\lambda_0}\frac{f(\lambda)}{g(\lambda)} 
\end{equation}
for arbitrary orders of zero of $g(\lambda)$. First, we prove the following representation of the regularized limit which will be the basis for our derivation of the desired formula for \eqref{hehi}.  

\begin{lemma}\label{representations2}
	Let $\lambda_0$ be an isolated singularity of the analytic function $w(\lambda)$.  If $w(\lambda)$ has a pole of order $n$ at $\lambda=\lambda_0$, then 
	\begin{equation}\label{prep}
	\reglim{\lambda}{\lambda_0} w(\lambda) = \frac{1}{n!}\lim_{\lambda\rightarrow \lambda_0}\frac{\mathrm{d}^n}{\mathrm{d}\lambda^n}\left[(\lambda-\lambda_0)^n w(\lambda)\right].
	\end{equation}
\end{lemma}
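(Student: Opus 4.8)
The plan is to work directly from the characterization of the regularized limit as the constant coefficient $a_0$ in the Laurent expansion of $w(\lambda)$ in a deleted neighborhood of $\lambda_0$, which is the content of Theorem \ref{representations}. Since $\lambda_0$ is a pole of order $n$, the principal part terminates: $w(\lambda)=\sum_{k=-n}^{\infty} a_k (\lambda-\lambda_0)^k$, valid in a punctured disk $0<|\lambda-\lambda_0|<r$. The natural move is to clear the pole by setting $g(\lambda)=(\lambda-\lambda_0)^n w(\lambda)$; this $g$ extends to a function holomorphic at $\lambda_0$ with Taylor expansion $g(\lambda)=\sum_{j=0}^{\infty} a_{j-n}(\lambda-\lambda_0)^j$, obtained by multiplying the Laurent series through by $(\lambda-\lambda_0)^n$ and reindexing $j=k+n$.

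First I would record that identification of $g$ and note that the constant $a_0$ we want is exactly the coefficient of $(\lambda-\lambda_0)^n$ in the Taylor series of $g$, i.e. $a_0 = g^{(n)}(\lambda_0)/n!$. This is just the standard Taylor coefficient formula applied to the holomorphic function $g$. Then I would observe that, because $g$ is holomorphic at $\lambda_0$, its $n$-th derivative is continuous there, so $g^{(n)}(\lambda_0)=\lim_{\lambda\to\lambda_0} g^{(n)}(\lambda) = \lim_{\lambda\to\lambda_0}\frac{\mathrm{d}^n}{\mathrm{d}\lambda^n}[(\lambda-\lambda_0)^n w(\lambda)]$, the limit being an ordinary Cauchy limit of a function already continuous at the point. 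Combining with Theorem \ref{representations} gives $\reglim{\lambda}{\lambda_0} w(\lambda) = a_0 = \frac{1}{n!}\lim_{\lambda\to\lambda_0}\frac{\mathrm{d}^n}{\mathrm{d}\lambda^n}[(\lambda-\lambda_0)^n w(\lambda)]$, which is \eqref{prep}.

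An alternative, essentially equivalent, route is to start from the contour integral representation \eqref{crep}, write $\reglim{\lambda}{\lambda_0} w(\lambda) = \frac{1}{2\pi i}\oint_{|\lambda-\lambda_0|=\rho} \frac{w(\lambda)}{\lambda-\lambda_0}\,\mathrm{d}\lambda$, and recognize the right-hand side as the residue at $\lambda_0$ of $w(\lambda)/(\lambda-\lambda_0)$, which has a pole of order $n+1$ there; the classical residue-at-a-pole formula then yields $\frac{1}{n!}\lim_{\lambda\to\lambda_0}\frac{\mathrm{d}^n}{\mathrm{d}\lambda^n}[(\lambda-\lambda_0)^{n+1}\cdot w(\lambda)/(\lambda-\lambda_0)]$, which is precisely \eqref{prep}. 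I would probably present the first (Laurent-coefficient) argument as the main proof since it is elementary and self-contained, perhaps remarking on the residue interpretation.

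There is no serious obstacle here — the statement is a repackaging of the order-$n$-pole residue formula in the language of the regularized limit, and the only point requiring a word of care is the interchange of "the Taylor coefficient of the holomorphically-extended $g$" with "the Cauchy limit of $g^{(n)}$": this is legitimate precisely because multiplying by $(\lambda-\lambda_0)^n$ removes the pole, so $g^{(n)}$ is genuinely continuous at $\lambda_0$ and the limit is attained. I would state this explicitly so the reader sees why the ordinary $\lim$ (rather than another $\reglim{}{}$) appears on the right-hand side of \eqref{prep}.
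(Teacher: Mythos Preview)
Your proof is correct and follows essentially the same approach as the paper: both arguments multiply the Laurent expansion by $(\lambda-\lambda_0)^n$ to clear the pole, then read off $a_0$ as the $n$-th Taylor coefficient of the resulting holomorphic function via differentiation and the ordinary Cauchy limit. Your explicit remark on why the Cauchy limit (rather than another regularized limit) appears on the right-hand side is a nice touch that the paper leaves implicit.
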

\begin{proof}
	In a deleted neighborhood of $\lambda_0$, the principal part of $w(\lambda)$ admits the expansion
	\begin{equation}\label{ff}
	w(\lambda)= \sum_{k=1}^n \frac{a_{-k}}{(\lambda-\lambda_0)^k} + a_0 + \sum_{k=1}^{\infty} \frac{a_k}{(\lambda-\lambda_0)^k},
	\end{equation}
	where $a_{-n}\neq 0$. Multiplying \eqref{ff} with $(\lambda-\lambda_0)^n$ and taking the $n$-th derivative with respect to $\lambda$, the contributions from the principal part vanishes and 
	\begin{equation}
	\frac{\mathrm{d}^n}{\mathrm{d}\lambda^n}\left[(\lambda-\lambda_0)^n w(\lambda)\right] = n!\, a_0 + O(\lambda-\lambda_0).
	\end{equation}
	Taking the limit $\lambda\rightarrow\lambda_0$ leads to equation \eqref{anaught} which establishes equation \eqref{prep}. 
\end{proof}

\begin{theorem}
	Let $f(\lambda)$ and $g(\lambda)$ be analytic at $\lambda_0$ with $f(\lambda_0)\neq 0$ and $g(\lambda_0)=0$. If $\lambda_0$ is a zero of $g(\lambda)$ of order $n$ so that $f(\lambda)/g(\lambda)$ has a pole of order $n$ at $\lambda_0$, then
	\begin{equation}\label{reglimit}
	\reglim{\lambda}{\lambda_0}\frac{f(\lambda)}{g(\lambda)} = \left(\frac{n!}{g^{(n)}(\lambda_0)}\right)^{n+1}\,\mathrm{det}\Delta^{(n)}(\lambda_0)
	\end{equation}
	where $\Delta^{(n)}(\lambda_0)$ is the $(n+1)\times(n+1)$ matrix given by
	\begin{equation}\label{determinant}
	\Delta^{(n)}(\lambda_0) = \left[\begin{array}{ccccc}
	\frac{g^{(n)}(\lambda_0)}{n!}  & 0 & 0 & \cdots & \frac{f(\lambda_0)}{0!} \\
	\frac{g^{(n+1)}(\lambda_0)}{(n+1)!}  & \frac{g^{(n)}(\lambda_0)}{n!}  & 0 & \cdots & \frac{f^{(1)}(\lambda_0)}{1!} \\
	\frac{g^{(n+2)}(\lambda_0)}{(n+2)!}  & \frac{g^{(n+1)}(\lambda_0)}{(n+1)!}  & \frac{g^{(n)}(\lambda_0)}{n!}  & \cdots & \frac{f^{(2)}(\lambda_0)}{2!} \\
	\vdots & \vdots & \vdots & \vdots & \vdots \\
	 \frac{g^{(2n)}(\lambda_0)}{(2n)!}  & \frac{g^{(2n-1)}(\lambda_0)}{(2n-1)!}  & \frac{g^{(2n-2)}(\lambda_0)}{(2n-2)!}  & \cdots & \frac{f^{(n)}(\lambda_0)}{n!} 
	\end{array}\right] .
	\end{equation}
	Equivalently 
	\begin{equation}\label{reglimit3}
	\reglim{\lambda}{\lambda_0}\frac{f(\lambda)}{g(\lambda)} = \frac{f^{(n)}(\lambda_0)}{g^{(n)}(\lambda_0)} + \left(\frac{n!}{g^{(n)}(\lambda_0)}\right)^{n+1}\, \sum_{k=0}^{n-1}\frac{f^{(k)}(\lambda_0)}{k!} C_{kn}(\lambda_0),
	\end{equation}
	where $C_{kn}(\lambda_0)$ is the cofactor of the element $f^{(k)}(\lambda_0)/k!$	of the matrix $\Delta^{(n)}(\lambda_0)$. 
\end{theorem}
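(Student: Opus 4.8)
The plan is to compute the regularized limit through its characterization as the constant term $a_0$ in the regular part of the Laurent expansion (Theorem~\ref{representations}), thereby reducing the problem to extracting a single Taylor coefficient and then to solving a triangular linear system by Cramer's rule.

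First I would expand both functions about $\lambda_0$. Writing $b_j = g^{(j)}(\lambda_0)/j!$ and $c_j = f^{(j)}(\lambda_0)/j!$, the hypothesis that $\lambda_0$ is a zero of $g$ of order $n$ means $b_0 = \cdots = b_{n-1} = 0$ and $b_n \neq 0$, so that $g(\lambda) = (\lambda-\lambda_0)^n\psi(\lambda)$ with $\psi(\lambda) = \sum_{j\geq 0} b_{n+j}(\lambda-\lambda_0)^j$ analytic and nonvanishing at $\lambda_0$. Hence $f(\lambda)/g(\lambda) = (\lambda-\lambda_0)^{-n} q(\lambda)$, where $q(\lambda) = f(\lambda)/\psi(\lambda)$ is analytic at $\lambda_0$; if $q(\lambda) = \sum_{j\geq 0} d_j(\lambda-\lambda_0)^j$ is its Taylor expansion, then $f(\lambda)/g(\lambda) = \sum_{k\geq -n} d_{k+n}(\lambda-\lambda_0)^k$, and so by Theorem~\ref{representations} the regularized limit equals $a_0 = d_n$.

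Next I would extract $d_n$. Equating coefficients of $(\lambda-\lambda_0)^m$ in the identity $q(\lambda)\psi(\lambda) = f(\lambda)$ for $m = 0,1,\dots,n$ gives $c_m = \sum_{i=0}^{m} b_{n+m-i}\, d_i$, that is, a lower-triangular $(n+1)\times(n+1)$ linear system for $(d_0,\dots,d_n)$ whose coefficient matrix $M$ is the Toeplitz matrix with $(m,i)$ entry $b_{n+m-i}$ (using the convention $b_j = 0$ for $j<n$, which makes the entries above the diagonal vanish), with constant diagonal $b_n$, and with right-hand side $(c_0,\dots,c_n)^{\mathrm{T}}$. Since $\det M = b_n^{n+1} \neq 0$, Cramer's rule gives $d_n = \det M_n / \det M$, where $M_n$ is $M$ with its last column replaced by $(c_0,\dots,c_n)^{\mathrm{T}}$. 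Substituting $b_j = g^{(j)}(\lambda_0)/j!$ and $c_j = f^{(j)}(\lambda_0)/j!$ identifies $M_n$ with the matrix $\Delta^{(n)}(\lambda_0)$ of \eqref{determinant}, and gives $\det M = (g^{(n)}(\lambda_0)/n!)^{n+1}$, which yields \eqref{reglimit}.

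Finally, for the equivalent form \eqref{reglimit3}, I would expand $\det\Delta^{(n)}(\lambda_0)$ along its last column, obtaining $\sum_{k=0}^{n} (f^{(k)}(\lambda_0)/k!)\, C_{kn}(\lambda_0)$, and then evaluate the $k=n$ cofactor explicitly: $C_{nn}(\lambda_0)$ is the determinant of the top-left $n\times n$ block, which is lower triangular with diagonal $g^{(n)}(\lambda_0)/n!$, hence equals $(g^{(n)}(\lambda_0)/n!)^n$. Inserting this term and simplifying the prefactor isolates $f^{(n)}(\lambda_0)/g^{(n)}(\lambda_0)$ and leaves the stated sum over $k=0,\dots,n-1$. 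The only genuine work here is the bookkeeping — checking that the triangular system has exactly the Toeplitz shape of $\Delta^{(n)}$ and that the convention $b_j=0$ for $j<n$ is consistent — and I expect this index-matching to be the main, though routine, obstacle. As an alternative to the Laurent-series route, one could instead start from representation \eqref{prep} of Lemma~\ref{representations2} and compute the $n$-th derivative of $(\lambda-\lambda_0)^n f(\lambda)/g(\lambda) = q(\lambda)$ by the Leibniz rule, again arriving at $d_n$.
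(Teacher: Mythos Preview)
Your proposal is correct and follows essentially the same approach as the paper. The paper also expands $f$ and $g$ about $\lambda_0$, forms the quotient $(\lambda-\lambda_0)^n f(\lambda)/g(\lambda)$ as a ratio of two power series, and identifies the regularized limit with the coefficient $c_n$ of that quotient; the only cosmetic differences are that the paper invokes Lemma~\ref{representations2} (the derivative representation) rather than Theorem~\ref{representations} directly, and quotes the determinant formula for the quotient of two power series from \cite{markus} rather than deriving it via Cramer's rule as you do.
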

\begin{proof}
	Since $f(\lambda)$ and $g(\lambda)$ are analytic at $\lambda_0$, they admit the expansions
	\begin{equation}\label{expandf}
	f(\lambda) = \sum_{k=0}^{\infty} \frac{f^{(k)}(\lambda_0)}{k!} (\lambda-\lambda_0)^k ,\;\; f(\lambda_0)\neq 0,
	\end{equation}
	\begin{equation}\label{expandg}
	g(\lambda) = \sum_{k=n}^{\infty} \frac{g^{(k)}(\lambda_0)}{k!} (\lambda-\lambda_0)^k,\;\; g^{(n)}(\lambda_0)\neq 0,
	\end{equation}
	for all $\lambda$ sufficiently close to $\lambda_0$. The expansion \eqref{expandg} for $g(\lambda)$ follows from the fact that $\lambda_0$ is a zero of $g(\lambda)$ of order $n$. Then
	\begin{eqnarray}\label{quotient}
	(\lambda-\lambda_0)^n \frac{f(\lambda)}{g(\lambda)} = \frac{\sum_{k=0}^{\infty} \frac{f^{(k)}(\lambda_0)}{k!} (\lambda-\lambda_0)^k}{\sum_{k=0}^{\infty} \frac{g^{(k+n)}(\lambda_0)}{(k+n)!} (\lambda-\lambda_0)^k} ,
	\end{eqnarray} 
	in a deleted neighborhood of $\lambda_0$.
	
	We now wish to obtain the expansion of the right hand side of \eqref{quotient} about $\lambda_0$ or obtain the quotient of the indicated division of the two infinite series. The following result is known \cite[pg. 436]{markus}:
	\begin{equation}\label{quotient1}
	\frac{\sum_{k=0}^{\infty}a_k (z-z_0)^k}{\sum_{k=0}^{\infty} b_k (z-z_0)^k} = \sum_{k=0}^{\infty} c_k (z-z_0)^k,
	\end{equation}
	where
	\begin{equation}\label{quotient2}
	c_k = \frac{1}{b_0^{k+1}} \, \mathrm{det} \left[\begin{array}{ccccc}
	b_0 & 0 & 0 & \cdots & a_0\\
	b_1& b_0 & 0 & \cdots & a_1 \\
	b_2 & b_1& b_0 & \cdots & a_2\\
	\vdots & \vdots &\vdots &\vdots &\vdots \\
	b_k & b_{k-1} & b_{k-2} & \cdots& a_k
	\end{array}\right] , \;\; k = 0, 1, 2,\dots . 
	\end{equation}
	When equations \eqref{quotient1} and \eqref{quotient2} are applied to the quotient in \eqref{quotient}, the coefficients are identified to be
	\begin{equation}\label{coeffs}
	a_k = \frac{f^{(k)}(\lambda_0)}{k!}, \;\; b_k=\frac{g^{(n)}(\lambda_0)}{(n+k)!}, \;\; k=0, 1, 2, \dots 
	\end{equation}
	Substituting these coefficients back into equations \eqref{quotient1} and \eqref{quotient2} yields the desired quotient in \eqref{quotient}.  
	
	The regularized limit can now evaluated using using Lemma-\ref{representations2}. Performing the indicated differentiations, we obtain
	\begin{equation}
	\frac{1}{n!} \frac{\mathrm{d}^n}{\mathrm{d}\lambda^n} \left((\lambda-\lambda_0)^n\frac{f(\lambda)}{g(\lambda)}\right) = c_n + O(\lambda-\lambda_0).
	\end{equation}  
	Taking the limit $\lambda\rightarrow\lambda_0$ yields the regularized limit of $f(\lambda)/g(\lambda)$ at $\lambda_0$, which is given by 
	\begin{equation}\label{reglimit2}
	\reglim{\lambda}{\lambda_0}\frac{f(\lambda)}{g(\lambda)} = c_n .
	\end{equation}
	Substituting the coefficients \eqref{coeffs}  into equation \eqref{quotient2} with $k=n$ gives expression  \eqref{reglimit} for the regularized limit with the appropriate determinant \eqref{determinant}. 
	
	To show equation \eqref{reglimit3}, we expand the determinant of $\Delta^{(n)}(\lambda_0)$ along the last column,
	\begin{equation}
	\mathrm{det}\Delta^{(n)}(\lambda) = \sum_{k=0}^n \Delta_{k n}(\lambda_0) C_{k n}(\lambda_0),
	\end{equation}
	where $\Delta_{k n}^{(n)}(\lambda_0)$ is the $k$-th row and $n$-th column element of the matrix $\Delta^{(n)}(\lambda_0)$ and $C_{k n}$ is the corresponding cofactor of $\Delta_{k n}^{(n)}(\lambda_0)$. The cofactor is given by  $C_{k n}=(-1)^{k+n} \mathrm{det}\Delta^{(n)}(\lambda_0)[k|n]$, where $\Delta^{(n)}(\lambda_0)[k|n]$ is the matrix obtained by removing the $k$-th row and $n$-th column of the matrix $\Delta^{(n)}(\lambda_0)$. Also the last column elements are $\Delta_{k n}^{(n)}(\lambda_0)=f^{(k)}(\lambda_0)/k!$. We have in particular
	\begin{equation}
	\Delta^{(n)}(\lambda_0)[n|n] = \left[\begin{array}{ccccc}
	\frac{g^{(n)}(\lambda_0)}{n!}  & 0 & 0 & \cdots & 0 \\
	\frac{g^{(n+1)}(\lambda_0)}{(n+1)!}  & \frac{g^{(n)}(\lambda_0)}{n!}  & 0 & \cdots & 0\\
	\frac{g^{(n+2)}(\lambda_0)}{(n+2)!}  & \frac{g^{(n+1)}(\lambda_0)}{(n+1)!}  & \frac{g^{(n)}(\lambda_0)}{n!}  & \cdots & 0 \\
	\vdots & \vdots & \vdots & \vdots & \vdots \\
	\frac{g^{(2n)}(\lambda_0)}{(2n)!}  & \frac{g^{(2n-1)}(\lambda_0)}{(2n-1)!}  & \frac{g^{(2n-2)}(\lambda_0)}{(2n-2)!}  & \cdots & \frac{g^{(n)}(\lambda_0)}{n!} 
	\end{array}\right] .
	\end{equation}
	Since the matrix is upper triangular, the determinant is just the product of the elements of the diagonal. Then we have the cofactor
	\begin{equation}
	C_{n n}(\lambda_0) = \left(\frac{g^{(n)}(\lambda)_0}{n!}\right)^n .
	\end{equation}
	Isolating the $n$-th term in the expansion of the determinant, the determinant assumes the form
	\begin{equation}
	\mathrm{det}\Delta^{(n)}(\lambda) = f^{(n)}(\lambda_0)\frac{(g^{(n)})^n}{(n!)^{n+1}}+\sum_{k=0}^{n-1} \Delta_{k n}(\lambda_0) C_{k n}(\lambda_0),
	\end{equation}
	Substituting this back into equation \eqref{reglimit}, we obtain the expression \eqref{reglimit3}.
\end{proof}

Regularized limits for specific values of order $n$ can now be obtained. Here we only list the specific cases of simple and double poles.
\begin{corollary} Let $f(z)$ and $g(z)$ be analytic at $\lambda_0$ with $f(\lambda_0)\neq 0$ and $g(\lambda_0)=0$. If $\lambda_0$ is a simple zero of $g(z)$, then 
\begin{equation}\label{reglimsimple}
\reglim{\lambda}{\lambda_0}\frac{f(\lambda)}{g(\lambda)} = \frac{f'(\lambda_0)}{g'(\lambda_0)} - \frac{f(\lambda_0) g''(\lambda_0)}{2 (g'(\lambda_0))^2}.
\end{equation}
\end{corollary}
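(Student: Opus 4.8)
The plan is to obtain the corollary as the special case $n=1$ of the preceding theorem, or, equivalently, by a direct application of Lemma~\ref{representations2}; I will describe the direct route first since it is shortest and self-contained. Since $\lambda_0$ is a simple zero of $g$, the quotient $f(\lambda)/g(\lambda)$ has a simple pole at $\lambda_0$, so Lemma~\ref{representations2} with $n=1$ gives
\begin{equation}
\reglim{\lambda}{\lambda_0}\frac{f(\lambda)}{g(\lambda)} = \lim_{\lambda\rightarrow\lambda_0}\frac{\mathrm{d}}{\mathrm{d}\lambda}\left[(\lambda-\lambda_0)\,\frac{f(\lambda)}{g(\lambda)}\right].
\end{equation}
First I would set $u=\lambda-\lambda_0$ and expand $f(\lambda)=f(\lambda_0)+f'(\lambda_0)u+O(u^2)$ and, using $g(\lambda_0)=0$, $g(\lambda)=g'(\lambda_0)u+\tfrac12 g''(\lambda_0)u^2+O(u^3)$, so that $(\lambda-\lambda_0)f(\lambda)/g(\lambda)$ is analytic at $\lambda_0$ with Taylor expansion $c_0+c_1 u+O(u^2)$; the right-hand side above is then exactly $c_1$.

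To extract $c_1$, I would match coefficients in the identity $f(\lambda_0)+f'(\lambda_0)u+\cdots=(c_0+c_1 u+\cdots)\bigl(g'(\lambda_0)+\tfrac12 g''(\lambda_0)u+\cdots\bigr)$. The $u^0$ coefficient gives $c_0=f(\lambda_0)/g'(\lambda_0)$, and the $u^1$ coefficient gives $f'(\lambda_0)=\tfrac12 c_0\, g''(\lambda_0)+c_1\, g'(\lambda_0)$, whence $c_1=f'(\lambda_0)/g'(\lambda_0)-f(\lambda_0)g''(\lambda_0)/\bigl(2 g'(\lambda_0)^2\bigr)$, which is precisely \eqref{reglimsimple}.

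Alternatively — and this is the route the placement of the corollary most directly suggests — one simply substitutes $n=1$ into \eqref{reglimit3}: the sum over $k$ collapses to the single term $k=0$, the matrix $\Delta^{(1)}(\lambda_0)$ is the $2\times2$ matrix with first row $\bigl(g'(\lambda_0),\,f(\lambda_0)\bigr)$ and second row $\bigl(\tfrac12 g''(\lambda_0),\,f'(\lambda_0)\bigr)$, and the cofactor of the entry $f(\lambda_0)/0!$ (first row, last column) is $C_{01}(\lambda_0)=-\tfrac12 g''(\lambda_0)$; substituting with the prefactor $(1!/g'(\lambda_0))^2$ reproduces the same expression. There is no genuine obstacle here, since the corollary is a pure specialization; the only point requiring care is the $0$-based indexing of $\Delta^{(n)}$, so I would double-check that $k=0$, $n=1$ indeed yields the sign $(-1)^{k+n}=-1$ that produces the subtraction in \eqref{reglimsimple}, and that the $f'(\lambda_0)/g'(\lambda_0)$ term of \eqref{reglimit3} is exactly the ``$f^{(n)}(\lambda_0)/g^{(n)}(\lambda_0)$'' term with $n=1$.
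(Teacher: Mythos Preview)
Your proposal is correct, and your second route is exactly the paper's own proof: specialize the theorem to $n=1$, write down the $2\times 2$ matrix $\Delta^{(1)}(\lambda_0)$, and evaluate the determinant to obtain \eqref{reglimsimple}. Your first route via Lemma~\ref{representations2} and direct Taylor matching is a minor shortcut that avoids quoting the general determinant formula, but it is the same computation in disguise.
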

\begin{proof}
For a simple pole, $n=1$, we have $b_0=g^{(1)}(\lambda_0)$ so that the coefficient $c_1$ is given by
\begin{equation}
c_1 = \frac{1}{(g^{(1)}(\lambda_0))^2}\, \mathrm{det}\left[\begin{array}{cc}
g^{(1)}(\lambda_0) & f(\lambda_0)\\
\frac{1}{2} g^{(2)}(\lambda_0) & f^{(1)}(\lambda_0)
\end{array}\right] .
\end{equation}
Evaluating the determinant, we obtain the regularized limit for simple poles given by equation \eqref{reglimsimple}.
\end{proof}
\begin{corollary} Let $f(z)$ and $g(z)$ be analytic at $\lambda_0$ with $f(\lambda_0)\neq 0$ and $g(\lambda_0)=0$. If $\lambda_0$ is a double zero of $g(z)$, then
\begin{equation}\label{reglimdouble}
	\begin{split}
\reglim{\lambda}{\lambda_0} \frac{f(\lambda)}{g(\lambda)} =& \frac{f''(\lambda_0)}{g''(\lambda_0)}  -\frac{2}{3} \frac{f'(\lambda_0) g'''(\lambda_0)}{(g''(\lambda_0))^2}\\
& + \frac{f(\lambda_0) \left(4 (g'''(\lambda_0))^2 - 3 g''(\lambda_0) g''''(\lambda_0)\right)}{18 (g''(\lambda_0))^3} . 
\end{split}
\end{equation}
\end{corollary}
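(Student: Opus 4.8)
The plan is to specialize the theorem of this section to the case $n=2$, exactly as the previous corollary handled $n=1$. Since $\lambda_0$ is a double zero of $g(\lambda)$, the quotient $f(\lambda)/g(\lambda)$ has a pole of order two there, so by equation \eqref{reglimit2} the regularized limit equals the coefficient $c_2$ in the power-series expansion of $(\lambda-\lambda_0)^2 f(\lambda)/g(\lambda)$ about $\lambda_0$. I would read off $c_2$ from the determinant formula \eqref{quotient2} with $k=2$, using the coefficient identifications \eqref{coeffs}, namely $a_0=f(\lambda_0)$, $a_1=f'(\lambda_0)$, $a_2=f''(\lambda_0)/2$ and $b_0=g''(\lambda_0)/2$, $b_1=g'''(\lambda_0)/6$, $b_2=g''''(\lambda_0)/24$.

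The next step is to evaluate the resulting $3\times 3$ determinant
\begin{equation}
c_2 = \frac{1}{b_0^3}\,\mathrm{det}\left[\begin{array}{ccc}
b_0 & 0 & a_0\\
b_1 & b_0 & a_1\\
b_2 & b_1 & a_2
\end{array}\right]
\end{equation}
by cofactor expansion along the last column, which gives $c_2 = a_2/b_0 - a_1 b_1/b_0^2 + a_0(b_1^2 - b_0 b_2)/b_0^3$. Substituting the coefficients above and clearing the factorials term by term then produces the three summands of \eqref{reglimdouble}: the $a_2/b_0$ term gives $f''(\lambda_0)/g''(\lambda_0)$; the $a_1 b_1/b_0^2$ term gives $-\tfrac{2}{3}\,f'(\lambda_0) g'''(\lambda_0)/(g''(\lambda_0))^2$; and the last term gives $f(\lambda_0)\bigl(4(g'''(\lambda_0))^2 - 3 g''(\lambda_0) g''''(\lambda_0)\bigr)/\bigl(18(g''(\lambda_0))^3\bigr)$ after putting $b_1^2 - b_0 b_2$ over the common denominator $144$ and dividing by $b_0^3=(g''(\lambda_0))^3/8$.

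Alternatively, the same identity follows directly from \eqref{reglimit3} with $n=2$: the leading term there is already $f''(\lambda_0)/g''(\lambda_0)$, and the remaining sum over $k=0,1$ involves the cofactors $C_{02}(\lambda_0)$ and $C_{12}(\lambda_0)$ of the matrix $\Delta^{(2)}(\lambda_0)$, which are precisely the two $2\times 2$ minors appearing above (up to sign). Either route reduces the corollary to a short determinant computation.

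I do not expect a genuine obstacle here; the only point requiring care is the bookkeeping of the factorial factors $2!$, $3!$, $4!$ concealed in the $b_k$ and the choice of common denominators $144$ and $8$ when assembling the final term — a routine but slightly error-prone simplification that I would carry out explicitly.
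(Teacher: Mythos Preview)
Your proposal is correct and follows essentially the same approach as the paper: both specialize the general determinant formula \eqref{quotient2} (equivalently \eqref{reglimit2}) to $n=2$, write out the $3\times 3$ determinant with the coefficient identifications \eqref{coeffs}, and evaluate it. Your version is slightly more explicit in carrying out the cofactor expansion and the factorial bookkeeping, but the argument is the same.
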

\begin{proof}
For a double pole, $n=2$, we have $b_0=g^{(2)}(\lambda_0)/2$ so that the coefficient $c_2$ is given by
\begin{equation}
c_2= \frac{2^3}{(g^{(2)}(\lambda_0))^3}\,\mathrm{det} \left[\begin{array}{ccc}
\frac{1}{2!} g^{(2)}(\lambda_0) & 0 & f(\lambda_0)\\
\frac{1}{3!} g^{(3)}(\lambda_0) & \frac{1}{2!} g^{(2)}(\lambda_0) & f^{(1)}(\lambda_0)\\
\frac{1}{4!} g^{(4)}(\lambda_0) & \frac{1}{3!} g^{(3)}(\lambda_0) & \frac{1}{2!} f^{(2)}(\lambda_0)
\end{array}\right] .
\end{equation}
Evaluating the determinant, we obtain the regularized limit for double poles given by equation \eqref{reglimdouble}.
\end{proof}
\noindent Regularized limits for higher order poles can be obtained similarly from the general expression.

Observe that the first term of the regularized limit in \eqref{reglimit3} is the expected limit using L'Hospital rule when $f(\lambda)$ and $g(\lambda)$ happen to have the same orders of zero at $\lambda_0$. Indeed the regularized limit coincides with L'Hospital's rule when $f(\lambda)$ and $g(\lambda)$ happen to have the same order $n$ of zeros at $\lambda=\lambda_0$, 
\begin{equation}\label{hospital}
\reglim{\lambda}{\lambda_0} \frac{f(\lambda)}{g(\lambda)} = \lim_{\lambda\rightarrow\lambda_0}\frac{f(\lambda)}{g(\lambda)}= \frac{f^{(n)}(\lambda_0)}{g^{(n)}(\lambda_0)}.
\end{equation}
This shows that the regularized limit is a generalization of the L'Hospital's rule when the later leads to a division by zero in the sense of Cauchy limit. Observe though that \eqref{reglimit3} reduces to L'Hospital's rule when the derivatives of $g(\lambda)$ vanish at sufficiently high orders. For example, for simple poles, if $g''(\lambda_0)=0$ the regularized limit reduces to L'Hospitals limit \eqref{hospital} even though the higher derivatives do not vanish. For double poles, the regularized limit reduces to L'Hospital's limit if $g'''(\lambda_0)=0$ and $g''''(\lambda_0)=0$ even though the higher derivatives do not vanish.  In general, when $g^{(n+1)}(\lambda_0)=g^{(n+2)}(\lambda_0)=\cdots=g^{(2n)}(\lambda_0)=0$ for any positive integer $n$, with higher order derivatives not necessarily vanishing, the regularized limit reduces to \eqref{hospital}. 

In our application of the regularized-limit, we are presented with an analytic function $w(\lambda)$ that is known to have a pole singularity at some $\lambda=\lambda_0$ but not yet in the form $w(\lambda)=f(\lambda)/g(\lambda)$. For our result to be useful, we will have to write the given $w(\lambda)$ in rational form with $g(\lambda)$ explicitly vanishing at $\lambda_0$ and $f(\lambda)$ non-vanishing at the same point. However, there is not a single way of rationalizing $w(\lambda)$ and question arises whether the outcome is the same for all rationalizations of $w(\lambda)$. 
Given a function $w(\lambda)$ and its  particular rationalization $w(\lambda)=f(\lambda)/g(\lambda)$. 
Let $\Phi(\lambda)$ be analytic at $\lambda=\lambda_0$ and $\Phi(\lambda_0)\neq 0$. Define the new rationalization $w(\lambda)=\tilde{f}(\lambda)/\tilde{g}(\lambda)$, where $\tilde{f}(\lambda)=f(\lambda)\Phi(\lambda)$ and $\tilde{g}(\lambda)=g(\lambda)\Phi(\lambda)$. Substituting this new rationalization back into Lemma-\ref{representations2}, we obtain the equality,
\begin{eqnarray}
	\reglim{\lambda}{\lambda_0}\frac{\tilde{f}(\lambda)}{\tilde{g}(\lambda)} =\reglim{\lambda}{\lambda_0}\frac{f(\lambda)}{g(\lambda)},
	\end{eqnarray}
	so that both rationalizations yield the same regularized limits. In our application below, we will choose the most convenient rationalization. For example, for simple poles it maybe convenient to choose the rationalization that yields an expression for the finite-part which is similar to L'Hospital's rule, which is achieved when $g''(\lambda_0)=0$. 
	
	% % % % % % % % % % % % % % % % % % 
	\begin{figure}
		\includegraphics[scale=0.5]{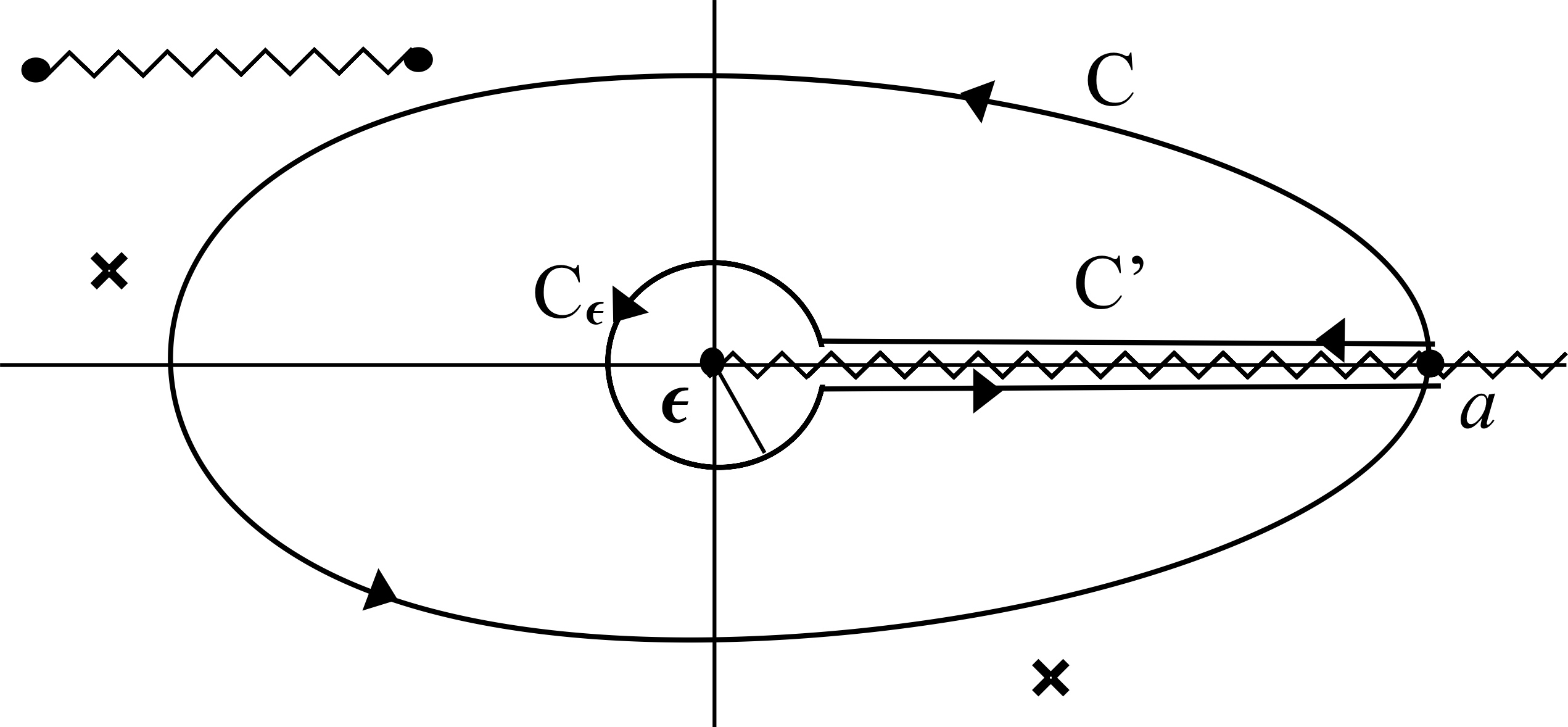}
		\caption{The contour $C$ of integration  in the construction of the analytic continuation of the Mellin transform. The contour $C$ does not enclose any pole or cross any branch cut of $k(z)$. }
		\label{boat1}
	\end{figure}
	% % % % % % % % % % % % % % % % % % % % % % % % % % % % % % %
 
 \section*{Part II}
 \section{The Relationship Between Analytic Continuation and Finite-part Integral in the Absence of Logarithmic Singularities} \label{nonlog}
 We now address the first of the two main problems of the paper---to uncover the relationship between the Mellin transform and the finite-part integral. We limit our investigation to the class of functions which we denote by $\mathcal{K}_a$, where $a$ is a positive real number and may be infinite. 
 
 The elements of $\mathcal{K}_a$ are functions of the real variable $t$, $k(t)$, with the following properties: (1) $k(0)\neq 0$; (2) $k(t)$ has a complex extension, $k(z)$, which is analytic in the interval $[0,a)$ such that $k(t)$ is the restriction of $k(z)$ in the said interval; (3) the Mellin transform of $k(t)$ exists, 
 \begin{equation}\label{mt1}
 	\mathcal{M}_a[k(t);1-\lambda]= \int_0^a \frac{k(t)}{t^{\lambda}}\,\mathrm{d}t, \;\; d<\mathrm{Re}(\lambda)<1,
 \end{equation}
for some $d<1$.  
 If $a<\infty$, then $d=-\infty$; on the other hand, if $a=\infty$, $d$ may be finite or negative infinite depending on the behavior of $k(t)$ at infinity. By the principle of analytic continuation, the complex extension $k(z)$ is uniquely identified by the conditions imposed on it. The function $\cos^2(t)$ belongs to $\mathcal{K}_a$ for all $a<\infty$ but does not belong to $\mathcal{K}_{\infty}$ because $\int_0^{\infty} t^{-\lambda}\cos^2(t)\,\mathrm{d}t$ does not converge for all $\lambda$.  
 
 From hereon when ever we refer to ``$\mathcal{K}_a$'' we will always mean that the upper limit of integration $a$ may take on all possible positive numbers including infinity unless otherwise indicated. That is any result accompanied with the statement ``given $k(t)$ in $\mathcal{K}_a$'' or ``for all $k(t)$ in $\mathcal{K}_a$'' or any statement similar to them will mean that the result holds for either finite positive $a$ or infinite $a$, again, unless otherwise indicated. In specific situations where $a=\infty$, we will refer to the Mellin transform simply as $\mathcal{M}[k(t);1-\lambda]$. 
 
 In this Section, for every $k(t)$ in $\mathcal{K}_a$, we wish to establish the relationship between the finite part integral 
 \begin{equation}\label{fpi1}
 	\bbint{0}{a}\frac{k(t)}{t^{\lambda}}\,\mathrm{d}t ,\;\;\;\mathrm{Re}(\lambda)\geq 1,
 \end{equation}
 and the Mellin transform of $k(t)$ given by equation \eqref{mt1}. The domains of definition of the finite-part integral \eqref{fpi1} and the Mellin transform \eqref{mt1} are disjoint. If a relationship exists between the two, this relationship must be through the analytic continuation of the Mellin transform, $\mathcal{M}^*_a[k(t);1-\lambda]$, covering the domain of the finite-part integral.

 \subsection{Analytic Continuation}
 We now obtain the analytic continuation of the Mellin transform $\mathcal{M}_a[k(t);1-\lambda]$ in the domain of the finite-part integral, the region $\mathrm{Re}(\lambda)\geq 1$.
 Consider the contour integral
 \begin{equation}
 	\int_C \frac{k(z)}{z^{\lambda}}\,\mathrm{d}z,
 \end{equation}
 where $z^{\lambda}$ takes the positive real axis as its branch cut and coincides with  $t^{\lambda}$ for $t>0$ on top of the real line, and $C$ is the contour straddling the branch cut of $z^{-\lambda}$ which starts and ends at $a$ itself as depicted in Figure-\ref{boat1}; the contour $C$ does not enclose any pole or cross any branch cut of $k(z)$. We deform the contour $C$ into the contour $C'$ as depicted in Figure-\ref{boat1} to obtain
 \begin{equation}
 	\int_C \frac{k(z)}{z^{\lambda}}\,\mathrm{d}z = \left(e^{-2\pi\lambda i}-1\right) \int_{\epsilon}^a \frac{k(t)}{t^{\lambda}}\,\mathrm{d}t + \int_{C_{\epsilon}}\frac{k(z)}{z^{\lambda}}\,\mathrm{d}z ,
 \end{equation}
 where $C_{\epsilon}$ is a circular path of radius $\epsilon$. (From hereon, whenever we say ``deform the contour from $C$ to $C'$'' we will always mean the deformation indicated by Figure-\ref{boat1}.) For $\mathrm{Re}(\lambda)<1$, the integral along $C_{\epsilon}$ vanishes in the limit as $\epsilon\rightarrow 0$. Under this condition, we obtain
 \begin{equation}\label{ac1}
 	\int_0^a\frac{k(t)}{t^{\lambda}}\mathrm{d}t = \frac{1}{\left(e^{-2\pi\lambda i}-1\right)} \int_{C}\frac{k(z)}{z^{\lambda}}\,\mathrm{d}z, \;\;\; \mathrm{Re}(\lambda)<1 .
 \end{equation}
 
 The right hand side of equation \eqref{ac1} is analytic in the region $\mathrm{Re}(\lambda)\geq 1$ except at some isolated points. Thus the right hand side is the desired analytic continuation of the Mellin transform,
 \begin{equation}\label{analcon}
 	\mathcal{M}^*_a[k(t);1-\lambda] = \frac{1}{\left(e^{-2\pi\lambda i}-1\right)} \int_{C}\frac{k(z)}{z^{\lambda}}\,\mathrm{d}z .
 \end{equation}
The analytic continuation \eqref{analcon} extends $\mathcal{M}_a[k(t);1-\lambda]$ to the right of its strip of analyticity but not to its left. That is the domain of $\mathcal{M}_a^*[k(t);1-\lambda]$ is $-d<\mathrm{Re}(\lambda)$. In this domain, the analytic continuation is analytic everywhere except possibly at the points $\lambda=1, 2, 3, \dots$. Clearly, the points $\lambda=1, 2, 3, \dots$ are at most simple poles. For positive integer $m$, the integral around the contour reduces to an integration around a closed curve and it yields the value 
 \begin{equation}
 	\int_C \frac{k(z)}{z^m}\,\mathrm{d}z =\frac{ 2\pi i}{m!} k^{(m-1)}(0) .
 \end{equation}
 Since $\lambda=m$ is a simple zero of $(e^{-2\pi\lambda i}-1)$, $\lambda=m$ is a removable singularity when $k^{(m-1)}(0)=0$; but when $k^{(m-1)}(0)\neq 0$, $\lambda=m$ is a simple pole. At the removable singularities, we will assign $\mathcal{M}_a^*[k(t);1-\lambda]$ the value equal to its Cauchy limit there, rendering the analytic continuation analytic at those points. We summarize our result by the following statement.
 \begin{theorem}\label{one}
 	Let $k(t)$ be in $\mathcal{K}_a$ and $k(z)$ its complex extension. Then the analytic continuation of the Mellin transform $\mathcal{M}_a[k(t);1-\lambda]$ in the region $\mathrm{Re}(\lambda)\geq 1$ is given by
 	\begin{equation}
 		\mathcal{M}^*_a[k(t);1-\lambda] = \frac{1}{\left(e^{-2\pi\lambda i}-1\right)} \int_{C}\frac{k(z)}{z^{\lambda}}\,\mathrm{d}z,
 	\end{equation}
 	where ${C}$ is the contour straddling the branch cut of $z^{-\lambda}$ and starting from $a$ and ending at $a$ as well; moreover, ${C}$ does not enclose any of the singularities of $k(z)$. The analytic continuation has 
 	at most simple poles at $\lambda=1, 2, 3, \dots$, and analytic everywhere else in the region. When $k^{(m-1)}(0)=0$ for some positive integer $m$, $\lambda=m$ is a removable singularity of the analytic continuation; on the other hand, when $k^{(m-1)}(0)\neq 0$, $\lambda=m$ is a simple pole.  
 \end{theorem}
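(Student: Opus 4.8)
The formula for $\mathcal{M}^*_a[k(t);1-\lambda]$ has already been assembled in the discussion preceding the statement, so the plan is to turn that computation into a proof by making four things precise: (i) the contour deformation $C\to C'$ that produces \eqref{ac1}; (ii) the fact that the right-hand side of \eqref{ac1} defines a function holomorphic on $\mathrm{Re}(\lambda)\geq 1$ away from isolated points, hence is the analytic continuation $\mathcal{M}^*_a$; (iii) the location of the possible singularities as the zeros of $e^{-2\pi\lambda i}-1$ lying in that region, namely $\lambda=1,2,3,\dots$; and (iv) the classification of each such point by evaluating the contour integral there.

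For (i) I would fix $\lambda$ with $\mathrm{Re}(\lambda)<1$ and note that $k(z)z^{-\lambda}$ is single-valued and holomorphic on and between $C$ and $C'$ (see Figure~\ref{boat1})---which is exactly what the hypotheses ``$C$ encloses no pole and crosses no branch cut of $k(z)$'' are there to guarantee---so Cauchy's theorem gives $\int_C k(z)z^{-\lambda}\,\mathrm{d}z=\int_{C'}k(z)z^{-\lambda}\,\mathrm{d}z$. Splitting $C'$ into the segment above the cut (where $z^{\lambda}=t^{\lambda}$), the segment below the cut (where $z^{\lambda}=t^{\lambda}e^{2\pi\lambda i}$), and the circle $C_{\epsilon}$, the two segments combine to $(e^{-2\pi\lambda i}-1)\int_{\epsilon}^a k(t)t^{-\lambda}\,\mathrm{d}t$, while on $C_{\epsilon}$ one has $|k(z)z^{-\lambda}|\leq M\,\epsilon^{-\mathrm{Re}(\lambda)}$ with $M=\sup_{|z|\leq r}|k(z)|$ and arclength $2\pi\epsilon$, so that integral is $O(\epsilon^{1-\mathrm{Re}(\lambda)})\to 0$ precisely because $\mathrm{Re}(\lambda)<1$; letting $\epsilon\to 0$ gives \eqref{ac1}.

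For (ii) I would show that $G(\lambda):=\int_C k(z)z^{-\lambda}\,\mathrm{d}z$ is holomorphic: for fixed $z\ne 0$ the map $\lambda\mapsto z^{-\lambda}=e^{-\lambda\log z}$ is entire, $\log z$ is continuous along $C$, and the integral converges locally uniformly in $\lambda$---immediate when $a<\infty$, since $C$ is then a bounded contour on which $k$ is continuous, and when $a=\infty$ one invokes the decay of $k$ at infinity implicit in $k\in\mathcal{K}_{\infty}$ (i.e. convergence of the Mellin integral) to control the tail---so differentiation under the integral sign, or Morera plus Fubini, applies. Since $1/(e^{-2\pi\lambda i}-1)$ is meromorphic with poles exactly at the integers, the right-hand side of \eqref{ac1} is meromorphic on $\mathbb{C}$, holomorphic on $\mathrm{Re}(\lambda)\geq 1$ except possibly at $\lambda=1,2,3,\dots$, and it coincides with $\mathcal{M}_a[k(t);1-\lambda]$ on the strip $d<\mathrm{Re}(\lambda)<1$ by \eqref{ac1}; by the principle of analytic continuation it is therefore $\mathcal{M}^*_a[k(t);1-\lambda]$, establishing \eqref{analcon}. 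The zero of $e^{-2\pi\lambda i}-1$ at each $\lambda=m\in\{1,2,\dots\}$ is simple because its derivative there is $-2\pi i\ne 0$, so $\mathcal{M}^*_a$ can have at most a simple pole at each positive integer and is holomorphic at all other points of $\mathrm{Re}(\lambda)\geq 1$.

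For the dichotomy (iv), I would evaluate $G(m)$ by observing that for an integer exponent $z^{-m}$ is single-valued: the two straight sides of $C'$ then contribute equally and with opposite orientation---consistently with the vanishing of the prefactor $e^{-2\pi m i}-1$---so only the loop $C_{\epsilon}$ survives and the residue theorem gives $\int_C k(z)z^{-m}\,\mathrm{d}z=2\pi i\,\mathrm{Res}_{z=0}\!\bigl(k(z)z^{-m}\bigr)$, a nonzero constant multiple of $k^{(m-1)}(0)$ (this is the closed-contour value quoted just before the theorem). Hence: if $k^{(m-1)}(0)=0$ then $G(m)=0$, the simple zero of the denominator is cancelled, $\lambda=m$ is a removable singularity, and assigning there the Cauchy limit (which exists precisely because the singularity is removable) makes $\mathcal{M}^*_a$ holomorphic at $\lambda=m$; if $k^{(m-1)}(0)\ne 0$ then $G(m)\ne 0$ and $\lambda=m$ is a genuine simple pole. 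The step I expect to need the most care is the holomorphy of $G(\lambda)$ when $a=\infty$: there $C$ is unbounded and one must check that the analyticity and growth of $k(z)$ near $[0,\infty)$ guaranteed by $k\in\mathcal{K}_{\infty}$ are strong enough to let the contour be pushed onto the real axis without destroying convergence, locally uniformly in $\lambda$ up to and past $\mathrm{Re}(\lambda)=1$; once this is in place, the deformation, the residue evaluation at the integers, and the removable-versus-simple-pole dichotomy are all routine.
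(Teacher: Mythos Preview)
Your proposal is correct and follows essentially the same approach as the paper: deform $C$ to the keyhole $C'$, use $\mathrm{Re}(\lambda)<1$ to kill the $C_{\epsilon}$ contribution and obtain \eqref{ac1}, then read off the analytic continuation and classify the integer points via the residue $\int_C k(z)z^{-m}\,\mathrm{d}z=2\pi i\,k^{(m-1)}(0)/(m-1)!$ against the simple zeros of $e^{-2\pi\lambda i}-1$. The extra care you take with the holomorphy of $G(\lambda)$ and the $a=\infty$ tail goes beyond what the paper spells out but is entirely in the spirit of its argument.
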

 
 \subsection{The Analytic Continuation and Finite-part Integrals}
 We now establish the relationship between the analytic continuation of the Mellin transform and the finite part of the same integral when evaluated beyond its strip of analyticity. There are two cases to consider, when $\lambda\neq m$ and when $\lambda=m$ for positive integer $m$. In the former, $\lambda$ is a regular point of the analytic continuation and in the later it is at most a simple pole. 
 
  \begin{lemma}\label{analnonlog}
  	Let $k(t)$ be in $\mathcal{K}_a$ and $k(z)$ its complex extension. Let $\rho_0$ be the distance of the singularity of $k(z)$ nearest to the origin. Then for every positive $\epsilon<\rho_0,a$, and $\lambda\neq 1, 2, 3,\dots$,
 	\begin{equation}\label{bobbobx}
 		\mathcal{M}^*_a[k(t);1-\lambda] = \int_{\epsilon}^a \frac{k(t)}{t^{\lambda}}\,\mathrm{d}t +\sum_{l=0}^{\infty} a_l \frac{\epsilon^{l-\lambda+1}}{(l-\lambda+1)}.
 	\end{equation}
 \end{lemma}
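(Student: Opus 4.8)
The plan is to read the contour-deformation identity already derived in the construction of $\mathcal{M}^*_a[k(t);1-\lambda]$ at a \emph{fixed} small radius $\epsilon$ rather than in the limit $\epsilon\to 0$, and then to evaluate the leftover circular integral explicitly by Taylor expansion of $k$. So, first I would invoke Theorem~\ref{one} and deform $C$ into $C'$ as in Figure~\ref{boat1}, but stopping the small arc at radius $\epsilon$ with $0<\epsilon<\min(\rho_0,a)$. Since $k(z)$ is holomorphic on a neighborhood of $[0,a)$ and the only singularity of $z^{-\lambda}$ interior to the region swept by the deformation is the origin, Cauchy's theorem gives, with $C_\epsilon$ the circle $|z|=\epsilon$ traversed from the upper to the lower lip of the branch cut,
\[
\int_C \frac{k(z)}{z^{\lambda}}\,\mathrm{d}z=\left(e^{-2\pi\lambda i}-1\right)\int_\epsilon^a\frac{k(t)}{t^{\lambda}}\,\mathrm{d}t+\int_{C_\epsilon}\frac{k(z)}{z^{\lambda}}\,\mathrm{d}z ,
\]
which is exactly the identity displayed in the text, only now kept for a nonzero $\epsilon$; note that it is valid for every $\lambda$, being purely a deformation statement, and that the first integral on the right converges because the non-integrable point $t=0$ has been excised.

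Next I would evaluate $\int_{C_\epsilon}k(z)z^{-\lambda}\,\mathrm{d}z$. Because $\epsilon<\rho_0$, the Taylor series $k(z)=\sum_{l=0}^\infty a_l z^l$ converges uniformly on $C_\epsilon$ and may be integrated term by term against the bounded factor $z^{-\lambda}$. Parametrizing $z=\epsilon e^{i\theta}$, $0\le\theta\le 2\pi$, with the branch $0\le\arg z<2\pi$ fixed in Theorem~\ref{one}, a one-line computation gives $\int_{C_\epsilon}z^{l-\lambda}\,\mathrm{d}z=\left(e^{-2\pi\lambda i}-1\right)\epsilon^{l-\lambda+1}/(l-\lambda+1)$, the denominator being nonzero for every $l\ge 0$ precisely because $\lambda\neq 1,2,3,\dots$. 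Summing, $\int_{C_\epsilon}k(z)z^{-\lambda}\,\mathrm{d}z=\left(e^{-2\pi\lambda i}-1\right)\sum_{l=0}^\infty a_l\,\epsilon^{l-\lambda+1}/(l-\lambda+1)$, and this series converges absolutely since $\sum_l|a_l|\epsilon^l<\infty$ while $|l-\lambda+1|$ stays bounded away from $0$ and grows like $l$.

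Finally I would insert both evaluations into the representation $\mathcal{M}^*_a[k(t);1-\lambda]=(e^{-2\pi\lambda i}-1)^{-1}\int_C k(z)z^{-\lambda}\,\mathrm{d}z$ of Theorem~\ref{one}, factor out the common $e^{-2\pi\lambda i}-1$, and cancel it — legitimate because $\lambda$ is not a positive integer, so $e^{-2\pi\lambda i}-1\neq 0$. What remains is precisely equation~\eqref{bobbobx}.

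I do not anticipate a genuine obstacle; the only point requiring care is the bookkeeping in the circular integral, namely keeping the orientation of $C_\epsilon$ and the branch of $z^{-\lambda}$ consistent with the conventions of Theorem~\ref{one}, so that the factor $e^{-2\pi\lambda i}-1$ emerges with the \emph{same} sign from the line-integral part and from the arc part and therefore cancels cleanly upon division. The interchange of summation and integration on $C_\epsilon$ and the convergence of the resulting series are routine consequences of $\epsilon<\rho_0$ and $\lambda\notin\{1,2,3,\dots\}$.
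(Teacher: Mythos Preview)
Your proposal is correct and follows essentially the same route as the paper: deform $C$ to $C'$ at a fixed radius $\epsilon<\min(\rho_0,a)$, expand $k(z)$ in its Taylor series on $C_\epsilon$, integrate term by term via $z=\epsilon e^{i\theta}$ to produce the factor $(e^{-2\pi\lambda i}-1)$, and then divide through. The only cosmetic difference is that the paper writes the deformation identity \eqref{bob} directly in terms of $\mathcal{M}^*_a$ (with the prefactor already absorbed) rather than first writing $\int_C$ and dividing at the end, but the substance is identical.
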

 \begin{proof}
 	We deform the contour of integration from $C$ to $C'$. Since no pole is enclosed by the contour $C$, the deformation leads to the equality
 	\begin{equation}\label{bob}
 		\mathcal{M}^*_a[k(t);1-\lambda] = \int_{\epsilon}^a \frac{k(t)}{t^{\lambda}}\,\mathrm{d}t + \frac{1}{\left(e^{-2\pi\lambda i}-1\right)} \int_{C_{\epsilon}} \frac{k(z)}{z^{\lambda}}\,\mathrm{d} z .
 	\end{equation}
 	This time $\mathrm{Re}\lambda\geq 1$ so that the integral around $C_{\epsilon}$ does not vanish as $\epsilon\rightarrow 0$; in fact, the integral diverges in the limit so that we cannot drop it in the same way we did when we were obtaining the analytic continuation. Also the first integral diverges in the same limit. 
 	
 	We take $\epsilon$ sufficiently small so that $k(z)$ is analytic in a neighborhood of the origin that contains ${C}_{\epsilon}$. Then we can expand $k(z)$ about $z=0$, $k(z)=\sum_{l=0}^{\infty}a_l z^l$, to evaluate the integral around ${C}_{\epsilon}$. Inserting the expansion back in the integral $\int_{{C}_{\epsilon}} k(z) z^{-\lambda}\,\mathrm{d}z$ and performing term by term integration with the parametrization $z=\epsilon e^{i\theta}$, we obtain the integral around ${C}_{\epsilon}$,
 	\begin{equation}\label{bok}
 		\int_{{C}_{\epsilon}} \frac{k(z)}{z^{\lambda}}\,\mathrm{d}z = \left(e^{-2\pi\lambda i}-1\right) \sum_{l=0}^{\infty} a_l \frac{\epsilon^{l-\lambda+1}}{(l-\lambda+1)}, \;\;\; \lambda\neq 0, 1, 2, \dots . 
 	\end{equation}
 	Substituting \eqref{bok} back into \eqref{bob} yields \eqref{bobbobx}.
 \end{proof}
 
 \subsubsection{At Regular Points} 
 
 \begin{theorem}\label{hello}
 	Let $k(t)$ be in $\mathcal{K}_a$ and $k(z)$ its complex extension. Then for all non-integer $\lambda$ in the half-plane $\mathrm{Re}(\lambda)\geq1$,
 	\begin{equation}\label{qua}
 		\bbint{0}{a} \frac{k(t)}{t^{\lambda}}\,\mathrm{d}t = \mathcal{M}^*_a[k(t);1-\lambda] .
 	\end{equation}
 	Moreover, the finite-part integral assumes the contour integral representation
 	\begin{equation}\label{que}
 		\bbint{0}{a}\frac{k(t)}{t^{\lambda}}\mathrm{d}t = \frac{1}{\left(e^{-2\pi\lambda i}-1\right)} \int_{C}\frac{k(z)}{z^{\lambda}}\,\mathrm{d}z,
 	\end{equation}
 where the contour $C$ is as described in Theorem-\ref{one}. 
 \end{theorem}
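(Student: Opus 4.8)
The plan is to extract both \eqref{qua} and \eqref{que} directly from Lemma~\ref{analnonlog}. Rearranging \eqref{bobbobx}, for every $0<\epsilon<\min(\rho_0,a)$ and every non-integer $\lambda$ one has the \emph{exact} identity
\[
\int_\epsilon^a \frac{k(t)}{t^\lambda}\,\mathrm{d}t \;=\; \mathcal{M}^*_a[k(t);1-\lambda] \;-\; \sum_{l=0}^\infty a_l\,\frac{\epsilon^{\,l-\lambda+1}}{l-\lambda+1},
\]
where the $a_l$ are the Taylor coefficients of $k(z)$ at the origin. Because this holds for \emph{all} small $\epsilon$, not merely asymptotically, it already exhibits a decomposition of the type \eqref{decomposition}, and the finite-part is to be read off from it.

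First I would split $\{0,1,2,\dots\}$ into the set $S_-=\{l:\mathrm{Re}(l-\lambda+1)\le 0\}$, which is finite because $\mathrm{Re}(\lambda)$ is fixed, and its complement $S_+$. For $l\in S_-$ the term $a_l\,\epsilon^{\,l-\lambda+1}/(l-\lambda+1)$ is a pure algebraic power $c_l\,\epsilon^{\mu_l}$ with $\mathrm{Re}(\mu_l)\le 0$ and $\mu_l\neq 0$; this is precisely where the hypothesis $\lambda\notin\mathbb{Z}$ enters, since $\mu_l=l-\lambda+1=0$ (which would instead generate a term $-a_l\ln\epsilon$) happens exactly when $\lambda$ is the positive integer $l+1$. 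Hence each such term fails to converge as $\epsilon\to0$, and I set $D_\epsilon:=-\sum_{l\in S_-} a_l\,\epsilon^{\,l-\lambda+1}/(l-\lambda+1)$. For $l\in S_+$ the corresponding term tends to $0$, so I set $C_\epsilon:=\mathcal{M}^*_a[k(t);1-\lambda]-\sum_{l\in S_+} a_l\,\epsilon^{\,l-\lambda+1}/(l-\lambda+1)$; then $\int_\epsilon^a k(t)t^{-\lambda}\,\mathrm{d}t=C_\epsilon+D_\epsilon$ is a decomposition of the form \eqref{decomposition}, $D_\epsilon$ being a finite sum of diverging algebraic powers of $\epsilon$ with no logarithmic term.

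Next I would verify $\lim_{\epsilon\to 0}C_\epsilon=\mathcal{M}^*_a[k(t);1-\lambda]$, i.e.\ that the $S_+$-tail vanishes. Fixing $\epsilon_0<\min(\rho_0,a)$, the absolute convergence of $\sum_l |a_l|\epsilon_0^{\,l}$ together with the bound $\epsilon^{\,l-\mathrm{Re}(\lambda)+1}\le\epsilon_0^{\,l-\mathrm{Re}(\lambda)+1}$, valid for $0<\epsilon\le\epsilon_0$ and $l\in S_+$ because the exponent is positive, supplies a Weierstrass majorant; the tail therefore converges uniformly on $[0,\epsilon_0]$ and, being a series of functions each continuous at $\epsilon=0$ with value $0$, it tends to $0$. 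By the definition \eqref{fpidef} of the finite-part this gives $\bbint{0}{a}k(t)t^{-\lambda}\,\mathrm{d}t=\lim_{\epsilon\to0}C_\epsilon=\mathcal{M}^*_a[k(t);1-\lambda]$, which is \eqref{qua}; substituting the contour form \eqref{analcon} of the analytic continuation then immediately yields \eqref{que}. For $a=\infty$ the same computation applies verbatim, since Lemma~\ref{analnonlog} and \eqref{analcon} are already stated in that generality; alternatively one passes to the limit $a\to\infty$ via \eqref{fpidef2}.

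I do not anticipate a real obstacle: essentially all the work is carried by Lemma~\ref{analnonlog}, and the only point demanding care is the bookkeeping that matches the exact expansion above to the decomposition \eqref{decomposition} --- namely checking that $S_-$ is finite and that, because $\lambda$ is not a positive integer, no $\ln\epsilon$ appears --- so that $D_\epsilon$ is unambiguously the diverging part and the finite-part is unambiguously $\mathcal{M}^*_a[k(t);1-\lambda]$.
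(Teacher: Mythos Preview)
Your proposal is correct and follows essentially the same route as the paper: both start from Lemma~\ref{analnonlog}, split the series $\sum_l a_l\,\epsilon^{l-\lambda+1}/(l-\lambda+1)$ into its diverging and vanishing parts, and identify the former with $D_\epsilon$ so that the finite-part equals $\mathcal{M}^*_a[k(t);1-\lambda]$. The only difference is cosmetic: the paper additionally recomputes $D_\epsilon$ from scratch by inserting an intermediate cut at $t=c$ and expanding $k(t)$ there, then checks the two expressions for $D_\epsilon$ agree, whereas you read the decomposition \eqref{decomposition} directly off the identity \eqref{bobbobx}, which is a bit more economical.
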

 \begin{proof}
 	 	The positive number $\epsilon$ in equation-\eqref{bobbobx} is fixed, but now we let $\epsilon$ approach zero. The integral and sum in 
 	 	equation \eqref{bobbobx} separately diverge in the said limit. We group the terms of the infinite series into those that diverge as $\epsilon\rightarrow 0$ and the terms that vanish in the same limit. Let $\lambda=\lambda_R + i \lambda_I$, with $\lambda_R$ and $\lambda_I$ the real and imaginary parts of $\lambda$. The terms corresponding to $l\leq \floor{\lambda_R-1}$ diverge or indeterminate as $\epsilon \rightarrow 0$ and the rest vanish in the same limit. Then
 	\begin{equation}\label{pwe}
 		\frac{1}{\left(e^{-2\pi\lambda i}-1\right)}\int_{C_{\epsilon}} \frac{k(z)}{z^{\lambda}}\,\mathrm{d}z =  \sum_{l=0}^{\floor{\lambda_R-1}} a_l \frac{\epsilon^{l-\lambda+1}}{(l-\lambda+1)}+\sum_{l=\floor{\lambda_R-1}+1}^{\infty} a_l \frac{\epsilon^{l-\lambda+1}}{(l-\lambda+1)},  
 	\end{equation}
 	for $\lambda\neq 0, 1, 2, \dots $. Substituting this back into equation \eqref{bob} and taking the limit, the analytic continuation assumes the limit representation
 	\begin{equation}\label{keek2}
 		\mathcal{M}^*_a[k(t);1-\lambda]= \lim_{\epsilon\rightarrow 0}\left[\int_{\epsilon}^a\frac{k(t)}{t^{\lambda}}\,\mathrm{d}t - \sum_{l=0}^{\floor{\lambda_R-1}} \frac{a_l}{(\lambda-l-1) \epsilon^{\lambda-l-1}}\right],
 	\end{equation}
 	for $\mathrm{Re}\lambda\geq 1$, $\lambda\neq 1, 2, 3,\dots$ . The limit exits by virtue of the fact that the left hand side exists.
 	
 	We now show that \eqref{pwe} is just the finite-part of the divergent integral $\int_0^a k(t) t^{-\lambda}\,\mathrm{d}t$ for $\mathrm{Re}\lambda\geq 1$. Let $c>\epsilon$ and $c$ is sufficiently small such that the expansion $k(t)=\sum_{l=0}^{\infty} a_l t^l$ converges absolutely for all $t\leq c$. Then the integral can be decomposed into two parts,
 	\begin{equation}
 		\int_{\epsilon}^a\frac{k(t)}{t^{\lambda}}\,\mathrm{d}t = \int_{\epsilon}^c\frac{k(t)}{t^{\lambda}}\mathrm{d}t + \int_c^a \frac{k(t)}{t^{\lambda}}\,\mathrm{d}t.
 	\end{equation}
 	Introducing the expansion $k(t)=\sum_{l=0}^{\infty} a_l t^l$ into the first term and distributing the integration, which we can do because the each term in the series is continuous and the the series converges uniformly in the interval, we obtain 
 	\begin{eqnarray}\label{kwe}
 		\int_{\epsilon}^a\frac{k(t)}{t^{\lambda}}\,\mathrm{d}t &=&-\sum_{l=0}^{\infty} a_l \frac{\epsilon^{l-\lambda +1}}{(l-\lambda+1)}+\sum_{l=0}^{\infty} a_l \frac{c^{l-\lambda +1}}{(l-\lambda+1)} + \int_{c}^a \frac{k(t)}{t^{\lambda}}\,\mathrm{d}t .
 	\end{eqnarray}
 	Despite appearances, the right hand side of the equation is independent of $c$ as can be verified by taking its derivative with respect to $c$. It is sufficient to identify the diverging part of the integral which arises from the first term in the right hand side of \eqref{kwe}. It is given by
 	\begin{eqnarray}
 		D_{\epsilon}= \sum_{l=0}^{\floor{\lambda_R-1}}\frac{a_l}{(\lambda-l-1) \epsilon^{\lambda-l-1}}.
 	\end{eqnarray}
 	Comparing this with the left hand side of equation \eqref{keek2}, we find that the entire expression is just the finite-part of the divergent integral. Then we have established equation \eqref{qua}; and, from this, the contour integral representation of the finite-part integral is equal to the analytic continuation itself as given by equation \eqref{que}.
 \end{proof}
 
 \subsubsection{At Isolated Singularities}
 \begin{theorem}
 	Let $k(t)$ be in $\mathcal{K}_a$ and $k(z)$ its complex extension. Then for all positive integer $m$, 
 	\begin{equation}\label{poleLim}
 		\bbint{0}{a}\frac{k(t)}{t^m}\mathrm{d}t = \reglim{\lambda}{m} \mathcal{M}^*_a[k(t);1-\lambda].
 	\end{equation}
 	Moreover, the finite-part integral assumes the contour integral representation
 	\begin{equation}\label{poleIso}
 		\bbint{0}{a} \frac{k(t)}{t^m}\,\mathrm{d}t = \frac{1}{2\pi i} \int_C \frac{k(z)}{z^m}\left(\log z - i\pi\right)\,\mathrm{d} z ,
 	\end{equation}
 where the contour $C$ is as described in Theorem-\ref{one}. 
 \end{theorem}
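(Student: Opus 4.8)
The plan is to read off the constant term $a_0$ of the Laurent expansion of $\mathcal{M}^*_a[k(t);1-\lambda]$ about $\lambda=m$ in two complementary ways: once from the real-variable representation of Lemma~\ref{analnonlog}, which will identify $a_0$ with the finite-part integral, and once from the contour representation of Theorem~\ref{one}, which will produce the contour formula \eqref{poleIso}; Theorem~\ref{representations} (which gives $\reglim{\lambda}{m}w(\lambda)=a_0$) then delivers both \eqref{poleLim} and \eqref{poleIso}. Throughout, write $k(z)=\sum_{l\geq 0}a_l z^l$ near the origin, so that $a_{m-1}=k^{(m-1)}(0)/(m-1)!$.

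First I would localize near $\lambda=m$ using Lemma~\ref{analnonlog}. Fix $\epsilon<\rho_0,a$; for $\lambda$ in a punctured disk about $m$ containing no other integer, \eqref{bobbobx} holds, and the only non-analytic term of the series is $l=m-1$, equal to $-a_{m-1}\epsilon^{m-\lambda}/(\lambda-m)=-a_{m-1}/(\lambda-m)+a_{m-1}\ln\epsilon+O(\lambda-m)$. Hence the principal part of $\mathcal{M}^*_a$ at $\lambda=m$ is exactly $-a_{m-1}/(\lambda-m)$ (so $\lambda=m$ is at most a simple pole, and is removable when $a_{m-1}=0$), and subtracting it and letting $\lambda\to m$ gives
\[
\reglim{\lambda}{m}\mathcal{M}^*_a[k(t);1-\lambda]=\int_{\epsilon}^{a}\frac{k(t)}{t^{m}}\,\mathrm{d}t+\sum_{l\geq 0,\ l\neq m-1}a_l\frac{\epsilon^{l-m+1}}{l-m+1}+a_{m-1}\ln\epsilon .
\]
The terms with $l\geq m$ vanish as $\epsilon\to 0$, while those with $0\leq l\leq m-2$ together with $a_{m-1}\ln\epsilon$ are precisely $-D_{\epsilon}$, the divergent part in the decomposition \eqref{decomposition} of $\int_{\epsilon}^{a}k(t)t^{-m}\,\mathrm{d}t$ --- this is the integer-exponent specialization of the term-by-term integration \eqref{kwe}, with the $l=m-1$ term now contributing the logarithm. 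Thus the right side tends to $\lim_{\epsilon\to 0}\big(\int_{\epsilon}^{a}k(t)t^{-m}\,\mathrm{d}t-D_{\epsilon}\big)=\bbint{0}{a}k(t)t^{-m}\,\mathrm{d}t$, which is \eqref{poleLim}; as a by-product the displayed quantity is independent of $\epsilon$, a useful consistency check.

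For \eqref{poleIso} I would instead Laurent-expand the two factors of $\mathcal{M}^*_a=(e^{-2\pi\lambda i}-1)^{-1}\int_C k(z)z^{-\lambda}\,\mathrm{d}z$ about $\lambda=m$. Since $e^{-2\pi i m}=1$ one has $(e^{-2\pi\lambda i}-1)^{-1}=-[2\pi i(\lambda-m)]^{-1}-\tfrac{1}{2}+O(\lambda-m)$, and differentiating under the compact contour integral gives $\int_C k(z)z^{-\lambda}\,\mathrm{d}z=\int_C k(z)z^{-m}\,\mathrm{d}z-(\lambda-m)\int_C k(z)z^{-m}\log z\,\mathrm{d}z+O((\lambda-m)^2)$. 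Multiplying, the constant term of the product --- which by Theorem~\ref{representations} equals $\reglim{\lambda}{m}\mathcal{M}^*_a$ --- is $\frac{1}{2\pi i}\int_C k(z)z^{-m}\log z\,\mathrm{d}z-\tfrac{1}{2}\int_C k(z)z^{-m}\,\mathrm{d}z$; writing $-\tfrac{1}{2}=-i\pi/(2\pi i)$ collapses this to $\frac{1}{2\pi i}\int_C\frac{k(z)}{z^{m}}(\log z-i\pi)\,\mathrm{d}z$, and combining with \eqref{poleLim} yields \eqref{poleIso}. Equivalently, one may apply Lemma~\ref{representations2} with $n=1$ to $w=f/g$, $f(\lambda)=\int_C k(z)z^{-\lambda}\,\mathrm{d}z$, $g(\lambda)=e^{-2\pi\lambda i}-1$; formula \eqref{reglimsimple} with $g'(m)=-2\pi i$, $g''(m)=-4\pi^2$ gives the same value.

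The main obstacle is the bookkeeping around the logarithm: the single series term $l=m-1$ simultaneously carries the pole of $\mathcal{M}^*_a$ (which the regularization removes) and a surviving $a_{m-1}\ln\epsilon$ (which must match the logarithm hidden in $D_{\epsilon}$), so one must keep ``subtract the principal part'' distinct from ``identify the divergent $\epsilon$-part'' and avoid double counting. A secondary technical point is the legitimacy of splitting the series in \eqref{bobbobx} into its $l=m-1$ term plus a remainder analytic at $\lambda=m$; this follows from the geometric (hence locally uniform on a small disk about $m$) convergence of $\sum_l a_l\epsilon^{l-\lambda+1}/(l-\lambda+1)$, valid for $\epsilon<\rho_0$, so the remainder is a locally uniform limit of functions analytic there.
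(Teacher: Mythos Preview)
Your proof is correct. The route differs from the paper's in order and in the tools invoked, and the difference is worth noting.

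The paper first establishes the contour formula and then the finite-part identification: it rationalizes $\mathcal{M}^*_a=f/g$ with $f(\lambda)=\int_C k(z)z^{-\lambda}\,\mathrm{d}z$ and $g(\lambda)=e^{-2\pi\lambda i}-1$, applies the generalized L'Hospital formula \eqref{reglimsimple} at the simple pole to obtain \eqref{poleIso} directly, and only then deforms $C$ to $C'$ to rederive the limit representation \eqref{kikiki} and recognize it as the finite-part. It also treats the removable case $k^{(m-1)}(0)=0$ separately via the ordinary Cauchy limit. You reverse the order: you start from Lemma~\ref{analnonlog} (which already encodes the deformation), read off the single singular term $l=m-1$, and identify the regularized limit with the finite-part in one stroke; then you get \eqref{poleIso} by multiplying the two Laurent expansions of the factors of $\mathcal{M}^*_a$. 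Your argument handles the pole and removable cases uniformly (when $a_{m-1}=0$ the principal part and the $\ln\epsilon$ term simply drop out), and it avoids invoking the Section~\ref{genhospital} machinery except as an optional cross-check at the end. What the paper's route buys is an explicit demonstration of the generalized L'Hospital rule in action; what yours buys is economy, since you reuse Lemma~\ref{analnonlog} rather than re-deforming the contour, and the direct product-of-Laurent-series computation is more elementary than the determinant formula.
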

 \begin{proof} We divide the proof into removable and simple pole singularities. In both cases, we rationalize the analytic continuation in the form
 	\begin{equation}
 		\mathcal{M}^*_a[k(t);1-\lambda]= \frac{f(\lambda)}{g(\lambda)},
 	\end{equation}
 	where
 	\begin{equation}
 		f(\lambda)= \int_C \frac{k(t)}{z^{\lambda}}\,\mathrm{d}z, \;\;\; g(\lambda) = \left(e^{-2\pi \lambda i}-1\right) .
 	\end{equation}
 	Both $f(\lambda)$ and $g(\lambda)$ are analytic everywhere in the strip of analyticity of the analytic continuation of the Mellin transform.
 	 	
 	\subsubsection*{At simple poles} 
 	For $k^{(m-1)}(0)\neq 0$ for positive integer $m$, the point $\lambda=m$ is a simple pole of the analytic continuation. Then the regularized limit at $\lambda=m$ is given by equation \eqref{reglimsimple} where $\lambda_0=m$, in particular
 	\begin{equation}\label{rv0}
 		\reglim{\lambda}{m}\frac{f(\lambda)}{g(\lambda)} = \frac{f'(m)}{g'(m)}- f(m)\frac{g''(m)}{2 (g'(m))^2} .
 	\end{equation}
 	Performing the indicated differentiations and evaluating the resulting expressions at $\lambda=m$ yield the regularized limit at the poles,
 	\begin{equation}\label{polerep}
 		\reglim{\lambda}{m}\mathcal{M}^*_a[k(t);1-\lambda] 
 		= \frac{1}{2\pi i} \int_C \frac{k(z)}{z^m}\left(\log z - i\pi\right)\,\mathrm{d} z .
 	\end{equation}
 	
 	Using the same method employed to establish Theorem-\ref{hello}, we can also establish that the regularized limit equals the finite part of the divergent integral $\int_0^a k(t) t^{-m},\,\mathrm{d}t$. We deform the contour of integration ${C}$ to ${C}'$ as indicated in Figure-\ref{boat1} in \eqref{polerep} and evaluate the integral around ${C}_{\epsilon}$ under the same conditions leading to equation \eqref{bok}. As in equation \eqref{pwe}, the integral around $C_{\epsilon}$ has diverging and converging parts as $\epsilon\rightarrow 0$. Taking the limit as $\epsilon\rightarrow 0$, we obtain the limit representation of the regularized limit at the poles,
 	\begin{equation}\label{kikiki}
 		\reglim{\lambda}{m}\mathcal{M}^*_a[k(t);1-\lambda] =  \lim_{\epsilon\rightarrow 0^+}\left[\int_{\epsilon}^a \frac{k(t)}{t^{m}}\,\mathrm{d}t  -\sum_{k=0}^{n-1}\frac{k^{(k)}(0)}{k! (m-2)}\frac{1}{\epsilon^{n-k}} + \frac{k^{(m-1)}(0)}{n!}\ln \epsilon\right].
 	\end{equation}
 	By extracting the diverging and converging parts of $\int_{\epsilon}^a k(t) t^{-m}\,\mathrm{d}t$ in the same we have done in Theorem-\ref{hello}, we find that the right hand side of \eqref{kikiki} is just the finite part of the divergent integral. Thus we have established that equations \eqref{poleLim} and \eqref{poleIso} hold at the simple poles of the analytic continuation.
 
 \subsubsection*{At removable singularities}
 Let $k^{(m-1)}(0)=0$ for some positive integer $m$ so that $\lambda=m$ is a removable singularity of the analytic continuation $\mathcal{M}^*_a[k(t);1-\lambda]$. Then the regularized limit coincides with the Cauchy limit
 \begin{equation}\label{cauchy}
 	\lim_{\lambda\rightarrow m}\mathcal{M}^*_a[k(t);1-\lambda]= \frac{f'(m)}{g'(m)}. 
 \end{equation}
 Performing the indicated differentiations at $\lambda=m$, we obtain 
 \begin{equation}\label{reglim_removable}
 	\lim_{\lambda\rightarrow m}\mathcal{M}^*_a[k(t);1-\lambda] = \frac{1}{2\pi i} \int_C \frac{k(z)}{z^m} \log z\,\mathrm{d} z .
 \end{equation}
 Following the same method as above, we can establish that \eqref{reglim_removable} is the finite-part integral when $\lambda=m$ is a removable singularity.
 
 Observe that the contour integral in \eqref{reglim_removable} is the reduction of the contour integral in \eqref{poleIso} for $\int_C k(z) z^{-m}\,\mathrm{d}z =0$ or for $k^{(m-1)}(0)=0$. Since the regularized limit coincides with the Cauchy limit for the case of removable singularities, we have thus shown that equations \eqref{poleLim} and \eqref{poleIso} hold when $k^{(m-1)}(0)=0$ or when $\lambda=m$ is a removable singularity of the analytic continuation of the Mellin transform. 
\end{proof}

Our result \eqref{poleLim} generalizes our earlier result in \cite{galapon1} where we had the restriction $\lambda=m+\nu$ for positive integer $m$ and $0<\nu<1$. Moreover, there the contour integral representation for the finite-part integral corresponding to $\nu=0$ was introduced independent of the case for non-integer $\lambda$. Now we see that both contour integral representations come from the same analytic continuation of the Mellin transform.
 
 \subsection{Example} Let us demonstrate how finite-part integrals can be conveniently extracted using tabulated Mellin transforms. Let us evaluate the following finite part integral using the method of analytic continuation,
 \begin{equation}
 	\bbint{0}{\infty} \frac{\cos(a t)}{t^{\lambda}}\,\mathrm{d}t ,\;\;\; \mathrm{Re}(\lambda)\geq 1 .
 \end{equation}
 We identify $k(t)=\cos(a t)$ which belongs to $\mathcal{K}_{\infty}$. Since $k(t)$ is even in $t$, $k^{(2n-1)}(0)=0$ for $\lambda=2n$, $n=1, 2,\dots$,  so that the points $\lambda=2n$ are removable singularities. On the other hand, $k^{(2n-2)}(0)\neq 0$ for the points $(2n-1)$, so that the points $\lambda=(2n-1)$ are simple poles of the analytic continuation of the Mellin transform. From the tabulated integral in \cite[p.441,\#3.761.9]{gr}, we deduce the integral
 \begin{equation}\label{popo}
 	\int_0^{\infty} \frac{\cos(a t)}{t^{\lambda}}\,\mathrm{d}t =\frac{\pi a^{\lambda-1} \sin(\pi\lambda/2)}{\sin(\pi\lambda) \Gamma(\lambda)}, \;\;\; 0<\mathrm{Re}(\lambda)<1 ,
 \end{equation}
 which is the desired Mellin transform $\mathcal{M}[\cos(a t); 1-\lambda]$. The right hand side of \eqref{popo} analytically continues the Mellin transform in the entire complex plane, 
 \begin{equation}
 	\mathcal{M}^*[\cos(a t);1-\lambda]=\frac{\pi a^{\lambda-1} \sin(\pi\lambda/2)}{\sin(\pi\lambda) \Gamma(\lambda)}.
 \end{equation}   
 Clearly the points $\lambda=2n$  are removable singularities and the points $\lambda=2n-1$ are simple poles in the half-plane $\mathrm{Re}(\lambda)\geq 1$, and all other points are regular points.
 
 The finite-part integrals can now be calculated from the analytic continuation of the Mellin transform. At the regular points, the finite-part equals the analytic continuation so that 
 \begin{equation}
 	\begin{split}\label{regular}
 		\bbint{0}{\infty}\frac{\cos(a t)}{t^{\lambda}}\,\mathrm{d}t = \frac{\pi a^{\lambda-1} \sin(\pi\lambda/2)}{\sin(\pi\lambda) \Gamma(\lambda)}, \;\; \mathrm{Re}(\lambda)\geq 1, \;\; \lambda\neq 1, 2, 3,\dots .
 	\end{split}
 \end{equation}
 To evaluate the finite-parts at the isolated singularities of $\mathcal{M}^*[\cos(at); 1-\lambda]$, we use the rationalization of the analytic continuation, \begin{equation}
 	\mathcal{M}^*[\cos(a );1-\lambda]=\frac{f(\lambda)}{g(\lambda)},
 \end{equation}
 where 
 \begin{equation}
 	f(\lambda)=\pi a^{\lambda-1} \frac{\sin(\pi\lambda/2)}{\Gamma(\lambda)}, \;\;\; g(\lambda)=\sin(\pi\lambda).
 \end{equation} 
 At the removable singularities, $\lambda=2n$, the finite-part is equal to the usual Cauchy limit of $\mathcal{M}^*[\cos(a t);1-\lambda]$ as $\lambda\rightarrow(2n-1)$,
 \begin{equation}\label{removable}
 	\bbint{0}{\infty}\frac{\cos(a t)}{t^{2n}}\,\mathrm{d}t=\frac{(-1)^n \pi 2^{2n-1} }{2 (2n-1)!},\;\;\; n=1, 2, 3,\dots  .
 \end{equation}
 On the other hand, at the simple poles, $\lambda=2n-1$, the finite-part is equal to the regularized limit of $\mathcal{M}^*[\cos(a t);1-\lambda]$ as $\lambda\rightarrow(2n-1)$, 
 \begin{equation}\label{pole}
 	\bbint{0}{\infty}\frac{\cos(a t)}{t^{2n-1}}\,\mathrm{d}t= \frac{(-1)^{n+1} a^{2n-2}}{(2n-n)!} \left[\ln a -\psi(2n-1)\right],\;\;\; n=1, 2, 3, \dots .
 \end{equation} 
 Alternatively the finite-part integrals \eqref{regular}, \eqref{removable} and \eqref{pole}  can be obtained using the canonical definition \eqref{fpidef} and \eqref{fpidef2} of the finite-part integral only with much more effort.

\section{The Relationship Between Analytic Continuation and Finite-part Integral in the Presence of Logarithmic Singularities}\label{powerlog}
Given $k(t)$ an element of $\mathcal{K}_a$, we now wish to obtain the relationship between the finite-part integral
\begin{equation}
	\bbint{0}{a} \frac{k(t)\ln^n t}{t^{\lambda}}\,\mathrm{d}t, \mathrm{Re}(\lambda)\geq 1,  
\end{equation}
and the Mellin transform
\begin{equation}\label{meme}
	\mathcal{M}_a[k(t) \ln^n t;1-\lambda]=\int_0^a \frac{k(t)\ln^n t}{t^{\lambda}}\,\mathrm{d}t, \;\;\; d<\mathrm{Re}(\lambda)<1,\;\;\; 0<a\leq \infty,  
\end{equation}
for all positive integer $n$. We first establish the existence of the Mellin transform \eqref{meme} given the conditions on $k(t)$.  
\begin{theorem}\label{co}
	Let $k(t)$ be in $\mathcal{K}_a$. Then the Mellin transform $\mathcal{M}_a[k(t)\ln^n t; 1-\lambda]$ for all positive integer $n$ exists and its strip of analyticity coincides with the strip of analyticity of the Mellin transform $\mathcal{M}_a[k(t);1-\lambda]$.
\end{theorem}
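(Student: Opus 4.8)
The plan is to deduce everything from the elementary differentiation identity
$$
\mathcal{M}_a[k(t)\ln^n t;1-\lambda] \;=\; (-1)^n\,\frac{\mathrm{d}^n}{\mathrm{d}\lambda^n}\,\mathcal{M}_a[k(t);1-\lambda],
$$
which holds at the level of integrands because $\partial_\lambda^n\, t^{-\lambda}=(-\ln t)^n t^{-\lambda}$. Once this identity is justified on the open strip $d<\mathrm{Re}(\lambda)<1$, the right-hand side is automatically holomorphic there, since a holomorphic function has holomorphic derivatives of every order; hence the left-hand side exists and is analytic on at least that strip. The remaining work is (i) to show the integral defining $\mathcal{M}_a[k(t)\ln^n t;1-\lambda]$ converges exactly on $d<\mathrm{Re}(\lambda)<1$, so that the strip does not shrink or grow, and (ii) to legitimise the interchange of differentiation and integration.

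For (i) I would split $[0,a)$ at an interior point $c$. Near $t=0$: since $k$ is analytic with $k(0)\neq0$, $|k(t)|$ is bounded on $[0,c]$; choosing $\sigma$ with $\mathrm{Re}(\lambda)<\sigma<1$, the fact that $|\ln t|^n\,t^{\sigma-\mathrm{Re}(\lambda)}\to0$ as $t\to0^{+}$ shows the integrand is dominated by a constant times $t^{-\sigma}$, which is integrable near $0$; and since $k(0)\neq0$ forces $|k(t)|\geq\tfrac12|k(0)|$ near $0$, the same analysis shows divergence as soon as $\mathrm{Re}(\lambda)\geq1$, so the right edge of the strip is exactly where it is for $\mathcal{M}_a[k(t);1-\lambda]$. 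Near $t=a$, only the case $a=\infty$ needs attention: for $\mathrm{Re}(\lambda)>d$ pick $\sigma'$ with $d<\sigma'<\mathrm{Re}(\lambda)$; then $\ln^n t/t^{\mathrm{Re}(\lambda)-\sigma'}$ is bounded on $[c,\infty)$, whence $|k(t)\ln^n t\,t^{-\lambda}|\leq C\,|k(t)|\,t^{-\sigma'}$, integrable on $[c,\infty)$ because the plain Mellin integral converges (absolutely, in the interior of its strip) at $\sigma'$. The reverse comparison $|k(t)\,t^{-\lambda}|\leq|k(t)\ln^n t\,t^{-\lambda}|$ for $t\geq e$ shows conversely that convergence of the logarithmic transform at $\lambda$ forces convergence of the plain one there, so the left edges coincide as well. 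Thus the two transforms share the same strip $d<\mathrm{Re}(\lambda)<1$; the sub-polynomial factor $\ln^n t$ is harmless at both ends.

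For (ii) and the derivative identity: for $0<r<R<a$ (with $R$ allowed to equal $a$ when $a<\infty$), the truncated integral $\lambda\mapsto\int_r^R k(t)\,t^{-\lambda}\,\mathrm{d}t$ is entire, with $n$-th $\lambda$-derivative equal to $(-1)^n\int_r^R k(t)\ln^n t\,t^{-\lambda}\,\mathrm{d}t$ (differentiation under the integral over a compact interval is immediate). The dominations obtained in (i) hold uniformly for $\lambda$ in any compact subset of the strip, so as $r\to0^{+}$ and $R\to a^{-}$ these truncations, and each of their $\lambda$-derivatives, converge uniformly on compact subsets of the strip. By the Weierstrass convergence theorem the limit $\mathcal{M}_a[k(t);1-\lambda]$ is holomorphic on the strip and its $n$-th derivative is the limit of the $n$-th derivatives, i.e. $(-1)^n\mathcal{M}_a[k(t)\ln^n t;1-\lambda]$; in particular the latter integral converges and is holomorphic on the whole strip. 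Combining this with (i) gives the asserted equality of strips of analyticity.

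The step I expect to be the main obstacle is the endpoint analysis at $t=a=\infty$: converting the bare hypothesis that $\mathcal{M}_a[k(t);1-\lambda]$ ``exists'' into the locally uniform domination needed for the Weierstrass/differentiation argument, and verifying cleanly that inserting $\ln^n t$ — which sharpens the singularity at $0$ and slows the decay at $\infty$, but only sub-polynomially — leaves both abscissae of convergence exactly fixed. By contrast the behaviour at $t=0$, where $k(0)\neq0$ pins the right edge at $\mathrm{Re}(\lambda)=1$, and the purely algebraic derivative identity, are routine.
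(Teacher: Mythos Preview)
Your proof is correct and, in fact, more careful than the paper's. The core content of your part (i)---the endpoint analysis at $t=0$ and at $t=a=\infty$ showing that the sub-polynomial factor $\ln^n t$ neither helps nor hurts integrability---is exactly the paper's argument, but you execute it rigorously with explicit buffer exponents $\sigma,\sigma'$, whereas the paper merely asserts that ``the behavior of $t^{-\lambda}k(t)\ln^n t$ at infinity is dominated by the behavior of $t^{-\lambda}k(t)$ there.'' You also supply the reverse comparison $|k(t)t^{-\lambda}|\le|k(t)\ln^n t\,t^{-\lambda}|$ for $t\ge e$, which the paper omits, so your argument actually pins down the left edge $d$ while the paper's does not.

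Where you genuinely diverge is in the organizing principle: you route everything through the differentiation identity $\mathcal{M}_a[k(t)\ln^n t;1-\lambda]=(-1)^n\partial_\lambda^n\mathcal{M}_a[k(t);1-\lambda]$ and the Weierstrass convergence theorem, whereas the paper treats this theorem as a pure integrability statement and defers the differentiation identity to the \emph{next} result (Theorem~\ref{ako}), which it proves by a different method---differentiating the contour-integral representation of the analytic continuation (Lemma~\ref{analnonlog}) rather than the Mellin integral itself. Your approach is more self-contained and yields analyticity and the derivative formula on the strip in one stroke; the paper's separation lets it work directly with the analytic continuation beyond the strip, which is what it ultimately needs. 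Either route is fine here; yours front-loads what the paper establishes one theorem later.
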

\begin{proof}
First let us consider the case of finite $a$. Under this condition the strip of analyticity of the Mellin transform $\mathcal{M}[k(t);1-\lambda]$ is the half plane $\mathrm{Re}(\lambda)<1$. Since $t^{-\lambda} k(t)\ln^n t$ is locally integrable in the neighborhood of $t=a$, the strip of analyticity of $\int_0^a t^{-\lambda} k(t) \ln^n t\,\mathrm{d}t$ still goes all the way in the negative real axis. Near the origin $\int t^{-\lambda} k(t) \ln^n t\, \mathrm{d}t =O(t^{1-\lambda}\ln^n t)$ which vanishes for all $n$ as $t\rightarrow 0^+$ provided $\mathrm{Re}(\lambda)<1$, owing to the slow growth of $\ln^n t$ compared to the decay of $t^{1-\lambda}$ as $t\rightarrow 0$. Then $\int_0^a t^{-\lambda} k(t) \ln^nt\, \mathrm{d}t$ exists in the strip $\mathrm{Re}(\lambda)<1$, which is the strip of analyticity of the $\mathcal{M}_a[k(t);1-\lambda]$. 

For this case of $a=\infty$, the strip of analyticity is $d<\mathrm{Re}(\lambda)<1$ for some $d<1$. The existence of $\int_0^{\infty} t^{-\lambda} k(t) \mathrm{d}t$ implies that $t^{-\lambda} k(t)$ is integrable at infinity. Now the rate of increase of $\ln^n t$ with $t$ vanishes as $t\rightarrow\infty$; this means that the behavior of $t^{-\lambda} k(t) \ln^n t$ at infinity is dominated by the behavior of  $t^{-\lambda} k(t)$ there. That is if $t^{-\lambda} k(t)$ is integrable at infinity, $t^{-\lambda} k(t)\ln^n t$ is likewise integrable there for any positive integer $n$. Thus a necessary condition for $\int_0^{\infty} t^{-\lambda} k(t) \ln^nt\, \mathrm{d}t$ to exist is that $d<\mathrm{Re}(\lambda)$. Now from the same reasoning given above, $t^{-\lambda} k(t)\ln^n t$ is integrable at the origin provided $\mathrm{Re}(\lambda)<1$. Thus $\int_0^{\infty} t^{-\lambda} k(t) \ln^n \mathrm{d}t$ exists in the strip $d<\mathrm{Re}(\lambda)<1$, which is the strip of analyticity of $\mathcal{M}_a[k(t);1-\lambda]$.
\end{proof}

\subsection{Analytic Continuation}
\begin{theorem}\label{ako}
	Let $k(t)$ be in $\mathcal{K}_a$. Then for every positive integer $n$
	\begin{equation}\label{generalanal}
		\mathcal{M}_a^*[k(t)\ln^nt;1-\lambda] = (-1)^n \frac{\mathrm{d}^n}{\mathrm{d}\lambda^n} \mathcal{M}_a^*[k(t);1-\lambda]. 
	\end{equation}
	If $k^{(m-1)}(0)\neq 0$, then $\lambda=m$ is a pole of $\mathcal{M}_a^*[k(t)\ln^nt;1-\lambda]$ of order $(n+1)$; otherwise, when $k^{(m-1)}(0)=0$, $\lambda=m$ is a removable singularity.  
\end{theorem}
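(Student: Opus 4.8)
The plan is to establish the differentiation identity \eqref{generalanal} first, and then read off the pole structure from it. For the first part, I would start from the integral representation of the analytic continuation given in Theorem~\ref{one},
\begin{equation}
\mathcal{M}_a^*[k(t);1-\lambda] = \frac{1}{\left(e^{-2\pi\lambda i}-1\right)} \int_{C}\frac{k(z)}{z^{\lambda}}\,\mathrm{d}z, \nonumber
\end{equation}
together with the Mellin-integral representation of $\mathcal{M}_a[k(t)\ln^n t;1-\lambda]$ valid in the strip $d<\mathrm{Re}(\lambda)<1$ afforded by Theorem~\ref{co}. Inside the strip, I would differentiate $\int_0^a t^{-\lambda}k(t)\,\mathrm{d}t$ under the integral sign $n$ times with respect to $\lambda$; since $\partial_\lambda^n t^{-\lambda} = (-1)^n (\ln t)^n t^{-\lambda}$, this yields $(-1)^n \frac{\mathrm{d}^n}{\mathrm{d}\lambda^n}\mathcal{M}_a[k(t);1-\lambda] = \mathcal{M}_a[k(t)\ln^n t;1-\lambda]$ on the strip. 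Differentiation under the integral is justified because the differentiated integrand is dominated locally uniformly in $\lambda$ on compact substrips (the $\ln^n t$ factors are harmless both at the origin and, when $a=\infty$, at infinity, exactly as argued in the proof of Theorem~\ref{co}). Both sides of this identity are analytic on the strip; the left-hand side continues analytically to $\mathrm{Re}(\lambda)\geq 1$ via $\mathcal{M}_a^*[k(t)\ln^n t;1-\lambda]$, and the right-hand side continues analytically there because $\mathcal{M}_a^*[k(t);1-\lambda]$ is analytic off the points $\lambda=1,2,3,\dots$ and derivatives of analytic functions are analytic. By the principle of analytic continuation (the identity theorem), the equality \eqref{generalanal} persists throughout the common domain of analyticity, hence on all of $\mathrm{Re}(\lambda)\geq 1$ away from the integers.

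For the pole-order claim, I would localize near a fixed positive integer $m$. By Theorem~\ref{one}, if $k^{(m-1)}(0)=0$ then $\lambda=m$ is a removable singularity of $\mathcal{M}_a^*[k(t);1-\lambda]$, so near $m$ this function is analytic; its $n$-th derivative is then analytic at $m$ as well, and by \eqref{generalanal} so is $\mathcal{M}_a^*[k(t)\ln^n t;1-\lambda]$, giving a removable singularity. If instead $k^{(m-1)}(0)\neq 0$, then $\lambda=m$ is a simple pole of $\mathcal{M}_a^*[k(t);1-\lambda]$, so in a punctured neighborhood of $m$ we may write $\mathcal{M}_a^*[k(t);1-\lambda] = \frac{c}{\lambda-m} + \phi(\lambda)$ with $c\neq 0$ and $\phi$ analytic at $m$. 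Differentiating $n$ times gives $(-1)^n \frac{\mathrm{d}^n}{\mathrm{d}\lambda^n}\mathcal{M}_a^*[k(t);1-\lambda] = (-1)^n\left(\frac{(-1)^n n!\,c}{(\lambda-m)^{n+1}} + \phi^{(n)}(\lambda)\right) = \frac{n!\,c}{(\lambda-m)^{n+1}} + (-1)^n\phi^{(n)}(\lambda)$, whose leading singular term has coefficient $n!\,c\neq 0$ and whose remaining term is analytic at $m$; hence $\lambda=m$ is a pole of order exactly $n+1$. Combining with \eqref{generalanal} identifies this as the pole structure of $\mathcal{M}_a^*[k(t)\ln^n t;1-\lambda]$.

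The main obstacle I anticipate is the rigorous justification of differentiation under the integral sign in the $a=\infty$ case, where one must simultaneously control the integrand near $t=0$ (the $t^{-\lambda}$ singularity, worsened only logarithmically by $\ln^n t$, still integrable for $\mathrm{Re}(\lambda)<1$) and near $t=\infty$ (where integrability of $t^{-\lambda}k(t)$ must survive multiplication by $\ln^n t$, which it does by the slow-growth argument of Theorem~\ref{co} applied on a slightly smaller substrip $d' < \mathrm{Re}(\lambda) < 1$ with $d < d' $). Once differentiation under the integral is legitimate on the open strip, the rest is a routine invocation of the identity theorem and a Laurent-expansion bookkeeping argument, so the analytic-continuation step, though conceptually the heart of the statement, poses no real difficulty.
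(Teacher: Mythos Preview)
Your proof is correct and, for the bare statement of the theorem, arguably cleaner than the paper's. The paper takes a different route: rather than differentiating the Mellin integral $\int_0^a t^{-\lambda}k(t)\,\mathrm{d}t$ directly on the strip, it differentiates the $\epsilon$-representation of Lemma~\ref{analnonlog},
\[
\mathcal{M}_a^*[k(t);1-\lambda]=\int_\epsilon^a\frac{k(t)}{t^\lambda}\,\mathrm{d}t+\sum_{l=0}^\infty a_l\,\frac{\epsilon^{l-\lambda+1}}{l-\lambda+1},
\]
which is already valid for all $\lambda\neq 1,2,\dots$ in the half-plane. Differentiating this identity $n$ times produces the explicit formula \eqref{xoxo}, from which the paper (i) recovers the equality with $\mathcal{M}_a[k(t)\ln^n t;1-\lambda]$ on the strip by letting $\epsilon\to 0$, and (ii) reads off the pole order by isolating the $l=m-1$ term of the series. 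Your Laurent-expansion argument for the pole order is equivalent and more direct.

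The trade-off is this: the paper's approach yields, as a by-product, the explicit $\epsilon$-representation \eqref{xoxo} of $\mathcal{M}_a^*[k(t)\ln^n t;1-\lambda]$, and that formula is not incidental---it is the engine of Corollary~\ref{colloanal} and the proofs of Theorems~\ref{bay} and~\ref{bek}, where the finite-part integrals are identified termwise. Your argument establishes the theorem efficiently but would leave those subsequent results without their main computational tool; you would still need to derive \eqref{xoxo} separately.
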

\begin{proof} 
We employ Lemma-\ref{analnonlog}, in particular equation \eqref{bobbobx}. The integral in the right hand side can be differentiated under the integral sign to any order and the infinite series can be differentiated term by term to any order. Then
\begin{equation}\label{xoxo}
	\begin{split}
		(-1)^n \frac{\mathrm{d}^n}{\mathrm{d}\lambda^n} \mathcal{M}_a^*[k(t);1-\lambda] = & \int_{\epsilon}^{a} \frac{k(t) \ln^n t}{t^{\lambda}}\,\mathrm{d}t \\
		&\hspace{-12mm}+ (-1)^n n! \sum_{l=0}^{\infty} a_l \epsilon^{l-\lambda+1}\sum_{j=0}^n \frac{(-1)^j}{j!} \frac{\ln^j\epsilon}{(l-\lambda+1)^{n-j+1}}
	\end{split}
\end{equation}
When $\mathrm{Re}(\lambda)<1$ the second term in equation \eqref{xoxo} vanishes as $\epsilon\rightarrow 0$ and the first term is just the integral $\int_0^a t^{-\lambda} k(t) \ln^n t\,\mathrm{d}t$ in the same limit. According to Theorem-\ref{co} this integral exists and has a strip of analyticity coinciding with that of  $\mathcal{M}_a[k(t);1-\lambda]$. Thus
\begin{equation}\label{de}
	\begin{split}
		(-1)^n \frac{\mathrm{d}^n}{\mathrm{d}\lambda^n} \mathcal{M}_a^*[k(t);1-\lambda] =  \mathcal{M}_a[k(t)\ln^n t;1-\lambda],\;\; \mathrm{Re}(\lambda)<1 . 
	\end{split}
\end{equation}
On the other hand, for some fixed positive $\epsilon<\rho_0$, the right hand side of equation \eqref{xoxo} is defined everywhere in the region $\mathrm{Re}(\lambda)\geq 1$ except at some isolated points, and, thus, analytically continues the right hand side of \eqref{de} in the region $\mathrm{Re}(\lambda)\geq 1$. This establishes the equality \eqref{generalanal}. 
	
Now the condition $k^{(m-1)}(0)\neq 0$ implies that the coefficient $a_{m-1}$ in the expansion of $k(t)$ is not equal to zero. Then from the second term of \eqref{xoxo}, it is seen that the term $l=(m-1)$ (and only this term) develops a pole at $\lambda=m-1$ of order $(n+1)$; thus, $\mathcal{M}_a[k(t)\ln^nt;1-\lambda]$ has a pole of order $(n+1)$ there. On the other hand, when $k^{(m-1)}(0)=0$, the coefficient $a_{m-1}$ is necessarily zero. Then the $l=(m-1)$ term is not present and the pole singularity is removed; thus $\mathcal{M}_a[k(t)\ln^nt;1-\lambda]$ has at most a removable singularity there. 
\end{proof}

\subsection{The Relationship Between the Analytic Continuation and Finite-part Integrals} 

\subsection{At Regular Points}
\begin{theorem}\label{bay} 
	Let $k(t)$ be in $\mathcal{K}_a$. If $\lambda\neq 1, 2, ,3,\dots$, then 
	\begin{equation}\label{hehe}
		\bbint{0}{a}\frac{k(t) \ln^nt}{t^{\lambda}}\,\mathrm{d}t = \mathcal{M}^*[k(t) \ln^nt;1-\lambda],\;\; \mathrm{\lambda}>1 ,
	\end{equation}
	for all positive integer $n$. 
\end{theorem}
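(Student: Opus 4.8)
The plan is to imitate the structure of the proof of Theorem~\ref{hello}, using the explicit $\epsilon$-representation \eqref{xoxo} that was obtained in the course of proving Theorem~\ref{ako}. First I would fix $\lambda$ non-integral with $\mathrm{Re}(\lambda)\geq 1$ and a small $\epsilon<\rho_0,a$, and start from equation \eqref{xoxo}, which expresses $(-1)^n\frac{\mathrm{d}^n}{\mathrm{d}\lambda^n}\mathcal{M}_a^*[k(t);1-\lambda]=\mathcal{M}_a^*[k(t)\ln^n t;1-\lambda]$ as the sum of $\int_\epsilon^a t^{-\lambda}k(t)\ln^n t\,\mathrm{d}t$ and a double series in $a_l$, $\epsilon^{l-\lambda+1}$, and powers of $\ln\epsilon$. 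The next step is to split that double series, as in \eqref{pwe}, into the finitely many terms with $l\leq\floor{\lambda_R-1}$, which diverge or are indeterminate as $\epsilon\to 0$, and the tail $l\geq\floor{\lambda_R-1}+1$, which vanishes in that limit because $\mathrm{Re}(l-\lambda+1)>0$ kills $\epsilon^{l-\lambda+1}\ln^j\epsilon$ for every $j$. Letting $\epsilon\to 0$ then yields a limit representation
\begin{equation}\label{prop-limrep}
	\mathcal{M}_a^*[k(t)\ln^n t;1-\lambda]=\lim_{\epsilon\to 0}\left[\int_\epsilon^a\frac{k(t)\ln^n t}{t^{\lambda}}\,\mathrm{d}t - D_\epsilon\right],
\end{equation}
where $D_\epsilon$ is the finite sum over $l\leq\floor{\lambda_R-1}$ of the divergent-in-$\epsilon$ contributions.

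The remaining task is to identify $D_\epsilon$ with the genuine divergent part of $\int_\epsilon^a t^{-\lambda}k(t)\ln^n t\,\mathrm{d}t$ in the sense of the decomposition \eqref{decomposition}, so that \eqref{prop-limrep} is exactly the canonical finite-part \eqref{fpidef}. To do this I would choose $c$ with $\epsilon<c$ small enough that $k(t)=\sum_{l=0}^\infty a_l t^l$ converges uniformly on $[\,0,c\,]$, split the integral at $c$, insert the series into $\int_\epsilon^c$, and integrate term by term using $\int_\epsilon^c t^{l-\lambda}\ln^n t\,\mathrm{d}t$ --- each such elementary integral produces, after evaluation at the lower limit, precisely a term of the form $\epsilon^{l-\lambda+1}$ times a polynomial of degree $n$ in $\ln\epsilon$, matching the structure in \eqref{xoxo}. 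As in Theorem~\ref{hello}, the resulting right-hand side is manifestly independent of $c$ (its $c$-derivative vanishes), and the part that diverges as $\epsilon\to 0$ is exactly the sum over $l\leq\floor{\lambda_R-1}$, i.e.\ it equals $D_\epsilon$. Comparing with \eqref{prop-limrep} gives \eqref{hehe}.

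Alternatively --- and this is probably the cleaner write-up --- one can avoid repeating the $\epsilon$-bookkeeping entirely: by Theorem~\ref{hello} the finite-part integral $\bbint{0}{a}k(t)t^{-\lambda}\,\mathrm{d}t$ equals $\mathcal{M}_a^*[k(t);1-\lambda]$ at every non-integral $\lambda$, both sides being analytic there in $\lambda$; differentiating $n$ times in $\lambda$, the right-hand side gives $(-1)^n\frac{\mathrm{d}^n}{\mathrm{d}\lambda^n}\mathcal{M}_a^*[k(t);1-\lambda]=\mathcal{M}_a^*[k(t)\ln^n t;1-\lambda]$ by Theorem~\ref{ako}, while on the left one must justify that $\frac{\mathrm{d}^n}{\mathrm{d}\lambda^n}\bbint{0}{a}k(t)t^{-\lambda}\,\mathrm{d}t=(-1)^n\bbint{0}{a}k(t)t^{-\lambda}\ln^n t\,\mathrm{d}t$. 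That interchange of $\frac{\mathrm{d}}{\mathrm{d}\lambda}$ with the finite-part operation is the one genuinely delicate point, and I would establish it by differentiating the limit representation \eqref{keek2} of Theorem~\ref{hello} under the limit sign, noting that both $\int_\epsilon^a$ and the finite correction sum $\sum_{l=0}^{\floor{\lambda_R-1}}a_l(\lambda-l-1)^{-1}\epsilon^{-(\lambda-l-1)}$ are differentiable in $\lambda$ with the convergence uniform on compact $\lambda$-neighborhoods avoiding the integers (here one uses that $\floor{\lambda_R-1}$ is locally constant away from the integers). I expect this justification of differentiating through the finite-part --- equivalently, the uniformity needed to pass the $\lambda$-derivative inside the $\epsilon\to 0$ limit --- to be the main obstacle; everything else is the same contour/series manipulation already carried out in Theorems~\ref{hello} and~\ref{ako}.
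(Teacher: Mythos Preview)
Your first approach is essentially the paper's own proof: start from the $\epsilon$-representation \eqref{xoxo}, split the $l$-sum at $\floor{\mathrm{Re}(\lambda)-1}$ to isolate the diverging piece, pass to the limit to get a representation of the form \eqref{prop-limrep}, and then verify that the subtracted finite sum is exactly the divergent part of $\int_\epsilon^a t^{-\lambda}k(t)\ln^n t\,\mathrm{d}t$ by splitting at $c$, expanding $k(t)$ in its Taylor series, and integrating term by term via the elementary formula $\int x^s\ln^r x\,\mathrm{d}x$ (equation \eqref{integral} in the paper). That is precisely what the paper does, step for step.

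Your alternative route --- differentiate the $n=0$ identity of Theorem~\ref{hello} and invoke Theorem~\ref{ako} on the right --- is a genuinely different organization. The paper does not argue this way here, but it does prove exactly the interchange you flag as delicate, namely $\frac{\mathrm{d}}{\mathrm{d}\nu}\bbint{0}{a}t^{-\nu-r}k(t)\ln^s t\,\mathrm{d}t=-\bbint{0}{a}t^{-\nu-r}k(t)\ln^{s+1}t\,\mathrm{d}t$, later on as Lemma~\ref{xe}, using the very $\epsilon$-representation \eqref{xoxo2} that Theorem~\ref{bay} produces as a corollary. So the differentiation approach is correct but somewhat circular relative to the paper's development; the direct $\epsilon$-bookkeeping is what actually establishes the result here and then feeds the differentiation lemma afterward.
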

\begin{proof} 
	Each term in the infinite series in the representation of $\mathcal{M}^*[k(t)\ln^n t;1-\lambda]$ given by equation \eqref{xoxo} either diverges or vanishes as $\epsilon$ approaches zero when $\mathrm{Re}(\lambda)>1$. The integral in the right hand side diverges in the same limit. However, the left hand side is fixed so that the limit of the right hand side exists as $\epsilon\rightarrow 0$. The terms rendering the series divergent must necessarily cancel the divergence coming from the integral. Then
	\begin{equation}\label{he}
		\begin{split}
			\mathcal{M}_a^*[k(t) \ln^n t; 1-\lambda] = & \lim_{\epsilon\rightarrow 0}\left[ \int_{\epsilon}^{a} \frac{k(t) \ln^n t}{t^{\lambda}}\,\mathrm{d}t\right. \\
			&\hspace{-20mm}\left.+ (-1)^n n! \sum_{l=0}^{\floor{\mathrm{Re}(\lambda)-1}} a_l \epsilon^{l-\lambda+1}\sum_{j=0}^n \frac{(-1)^j}{j!} \frac{\ln^j\epsilon}{(l-\lambda+1)^{n-j+1}}\right]
		\end{split}
	\end{equation}
	
	The desired equality \eqref{hehe} is established if the second term of \eqref{he} happens to be the negative of the divergent part of the integral $\int_{\epsilon}^{a} t^{-\lambda}\,k(t) \ln^nt \, \mathrm{d}t$ as $\epsilon\rightarrow 0$. Let $c$ be such that $0<\epsilon<c<a$. Then we have the decomposition
	\begin{equation}\label{decom}
		\int_{\epsilon}^{a} \frac{k(t)\ln^n t}{t^{\lambda}}\,\mathrm{d}t =\int_{\epsilon}^{c} \frac{k(t)\ln^n t}{t^{\lambda}}\,\mathrm{d}t + \int_{c}^{a} \frac{k(t)\ln^n t}{t^{\lambda}}\,\mathrm{d}t .
	\end{equation}
	  Let us work through the first term. Let $c$ be sufficiently small such that the expansion $k(t)=\sum_{l=0}^{\infty}a_l t^l$ holds in the interval $[0,c+\delta]$ for some small $\delta>0$. We can then expand $k(t)$ about $t=0$ and perform a term by term integration, 
	  \begin{equation}\label{sese}
	  	\int_{\epsilon}^c \frac{k(t)\ln^nt}{t^{\lambda}}\,\mathrm{d}t = \sum_{l=0}^{\infty} a_l \int_{\epsilon}^c t^{l-\lambda} \ln^n t\, \mathrm{d}t .
	  \end{equation}
	   Employing the known integral \cite[p-238, \#2.722]{gr},
	\begin{equation}\label{integral}
		\int x^s \ln^r x\,\mathrm{d}x = (-1)^r r! \,x^{s+1} \sum_{j=0}^r \frac{(-1)^j}{j!} \frac{\ln^j x}{(s+1)^{r-j+1}},
	\end{equation}
	in evaluating the integrals in equation \eqref{sese} and returning the result back into \eqref{decom}, equation \eqref{decom} assumes the form
	\begin{equation}\label{le}
		\begin{split}
			\int_{\epsilon}^{a} \frac{k(t)\ln^n t}{t^{\lambda}}\,\mathrm{d}t =& \int_{c}^{a} \frac{k(t)\ln^n t}{t^{\lambda}}\,\mathrm{d}t \\
			&+ (-1)^n n!\sum_{l=0}^{\infty} a_l c^{l-\lambda+1} \sum_{j=0}^n \frac{(-1)^n}{j!} \frac{\ln^j c}{(l-\lambda+1)^{n-j+1}}\\
			&-(-1)^n n!\sum_{l=0}^{\infty} a_l \epsilon^{l-\lambda+1} \sum_{j=0}^n \frac{(-1)^n}{j!} \frac{\ln^j \epsilon}{(l-\lambda+1)^{n-j+1}}.
		\end{split}
	\end{equation}
	
	Each term in the third term of \eqref{le} either diverge or vanish. The vanishing terms and the first two terms constitute the converging part of the left hand side of the equation \eqref{le}. The terms in the range $0\leq l<\mathrm{Re}(\lambda)-1$ of the third term diverge or indeterminate as $\epsilon$ approaches zero. Thus we have identified the divergent part of the left hand side and is given by
	\begin{equation}
		D_{\epsilon}=-(-1)^n n! \sum_{l=0}^{\floor{\mathrm{Re}(\lambda)-1}} a_l \epsilon^{l-\lambda+1}\sum_{j=0}^n \frac{(-1)^j}{j!} \frac{\ln^j\epsilon}{(l-\lambda+1)^{n-j+1}} .
	\end{equation} Comparing this with equation \eqref{he}, we verify that the second term in the right hand side of \eqref{he} is the negative of the divergent part of the integral $\int_{\epsilon}^a t^{-\lambda}k(t)\ln^n t\,\mathrm{d}t$ as $\epsilon$ becomes arbitrarily small. 	
\end{proof}

From equality \eqref{xoxo} and from Theorem-\ref{bay} itself, we obtain the following representation of the finite-part integral.
\begin{corollary} \label{colloanal}
Let $k(t)$ be in $\mathcal{K}_a$ and $k(z)$ its complex extension.	Let $\rho_0$ be the distance of the singularity of $k(z)$ nearest to the origin. Then for every positive $\epsilon<a,\rho_0$, and for all $\lambda\neq 1, 2, 3,\dots$,
	 	\begin{equation}\label{xoxo2}
		\begin{split}
			\bbint{0}{a} \frac{k(t) \ln^n t}{t^{\lambda}}  = & \int_{\epsilon}^{a} \frac{k(t) \ln^n t}{t^{\lambda}}\,\mathrm{d}t \\
			&\hspace{-12mm}+ (-1)^n n! \sum_{l=0}^{\infty} a_l \epsilon^{l-\lambda+1}\sum_{j=0}^n \frac{(-1)^j}{j!} \frac{\ln^j\epsilon}{(l-\lambda+1)^{n-j+1}} ,\;\; \mathrm{Re}(\lambda)>1,
		\end{split}
	\end{equation}
for all positive integer $n$. 
\end{corollary}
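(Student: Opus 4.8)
The plan is to derive \eqref{xoxo2} by stringing together three statements already in hand: the identity \eqref{xoxo} obtained in the proof of Theorem-\ref{ako}, the differentiation formula \eqref{generalanal} of that same theorem, and the equality \eqref{hehe} of Theorem-\ref{bay} between the finite-part integral and the analytic continuation at non-integer points.

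First I would fix a real $\epsilon$ with $0<\epsilon<\min(\rho_0,a)$; this is exactly the range in which $k(z)$ is analytic on and inside the circle $C_\epsilon$, so that the Taylor expansion $k(z)=\sum_{l\geq 0}a_l z^l$ underlying \eqref{bobbobx} converges there, while $\epsilon<a$ keeps the truncated integral $\int_\epsilon^a$ meaningful. Differentiating \eqref{bobbobx} $n$ times in $\lambda$ — under the integral sign in the first term and term by term in the series, both permissible for the reasons given in the proof of Theorem-\ref{ako} — produces \eqref{xoxo}. Hence, for every $\lambda$ with $\mathrm{Re}(\lambda)>1$ that is not a positive integer, the right-hand side of \eqref{xoxo}, namely the sum appearing in \eqref{xoxo2}, equals $(-1)^n\,\mathrm{d}^n/\mathrm{d}\lambda^n\,\mathcal{M}_a^*[k(t);1-\lambda]$.

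Next I would invoke \eqref{generalanal}, which asserts precisely that $(-1)^n\,\mathrm{d}^n/\mathrm{d}\lambda^n\,\mathcal{M}_a^*[k(t);1-\lambda]=\mathcal{M}_a^*[k(t)\ln^n t;1-\lambda]$, and then \eqref{hehe}, which for $\lambda\neq 1,2,3,\dots$ with $\mathrm{Re}(\lambda)>1$ identifies $\mathcal{M}_a^*[k(t)\ln^n t;1-\lambda]$ with the finite-part integral $\bbint{0}{a}t^{-\lambda}k(t)\ln^n t\,\mathrm{d}t$. Chaining these equalities together delivers \eqref{xoxo2} for every positive integer $n$.

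I do not anticipate a genuine obstacle; the only point requiring care is the compatibility of the domains of validity of the three ingredients. Equation \eqref{xoxo} requires $\epsilon<\rho_0$ and $\lambda\notin\{0,1,2,\dots\}$; equation \eqref{hehe} requires $\mathrm{Re}(\lambda)>1$ and $\lambda\notin\{1,2,3,\dots\}$; and the truncated integral requires $\epsilon<a$. Their intersection is exactly the hypothesis of the corollary, namely $0<\epsilon<\min(\rho_0,a)$ together with $\mathrm{Re}(\lambda)>1$ and $\lambda$ not a positive integer, so no further argument is needed. If one preferred a self-contained derivation, one could instead re-run the $\epsilon$-splitting of $\int_\epsilon^a$ carried out in the proof of Theorem-\ref{bay}, expand $k(t)$ about the origin, evaluate the pieces via \eqref{integral}, and read off the converging part directly; but the short chaining above is cleaner, since it leans only on statements already established.
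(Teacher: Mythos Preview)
Your proposal is correct and matches the paper's own justification, which simply states that the corollary follows ``from equality \eqref{xoxo} and from Theorem-\ref{bay} itself.'' You have merely made explicit the bridging role of \eqref{generalanal} between \eqref{xoxo} and \eqref{hehe}, which the paper leaves implicit since in the proof of Theorem-\ref{bay} equation \eqref{xoxo} is already treated as a representation of $\mathcal{M}_a^*[k(t)\ln^n t;1-\lambda]$.
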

\subsection{At Isolated Singularities}
\begin{theorem}\label{bek}
	Let $k(t)$ be in $\mathcal{K}_a$. Then for all positive integers $n$ and $m$,
	\begin{equation}\label{key}
		\bbint{0}{a}\frac{k(t) \ln^nt}{t^{m}}\,\mathrm{d}t = \reglim{\lambda}{m}\mathcal{M}_a^*[k(t)\ln^n t;1-\lambda] .
	\end{equation}
\end{theorem}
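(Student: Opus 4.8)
The plan is to start from the $\epsilon$-parametrized representation of the finite-part integral already in hand and then apply the regularized limit term by term, exploiting the linearity established in Part~I. By Corollary-\ref{colloanal} --- which, by Theorem-\ref{bay}, also represents $\mathcal{M}_a^*[k(t)\ln^n t;1-\lambda]$ --- for every $\lambda$ in a deleted neighborhood of $m$ and every fixed positive $\epsilon$ smaller than both $a$ and the distance from the origin to the nearest singularity of $k(z)$,
\[
	\mathcal{M}_a^*[k(t)\ln^n t;1-\lambda]=\int_{\epsilon}^{a}\frac{k(t)\ln^n t}{t^{\lambda}}\,\mathrm{d}t+(-1)^n n!\sum_{l=0}^{\infty}a_l\,\epsilon^{l-\lambda+1}\sum_{j=0}^{n}\frac{(-1)^j}{j!}\frac{\ln^j\epsilon}{(l-\lambda+1)^{n-j+1}},
\]
where the $a_l$ are the Taylor coefficients of $k$ at the origin. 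First I would apply $\reglim{\lambda}{m}$ to both sides. The integral is an entire function of $\lambda$ (since $\epsilon>0$), so its regularized limit is its value at $\lambda=m$. For the series I would invoke Theorem-\ref{sequence}: after checking that it converges uniformly on a small deleted disc about $\lambda=m$ --- which holds for $\epsilon$ small, since $\sum_l|a_l|\epsilon^l$ converges and the remaining factors are bounded uniformly in $l$ for $\lambda$ near $m$ --- the regularized limit passes inside the sum. Every term with $l\neq m-1$ is analytic at $\lambda=m$ and contributes its value there, while the term $l=m-1$ carries the pole of order $n+1$ predicted by Theorem-\ref{ako}.

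The crux is to evaluate $\reglim{\lambda}{m}$ of that single singular term, i.e. to extract the constant Laurent coefficient of $a_{m-1}\epsilon^{m-\lambda}\sum_{j=0}^{n}\frac{(-1)^j}{j!}(m-\lambda)^{-(n-j+1)}\ln^j\epsilon$ about $\lambda=m$. Expanding $\epsilon^{m-\lambda}=\sum_{p\geq 0}(m-\lambda)^p(\ln\epsilon)^p/p!$ and collecting the $(m-\lambda)^0$ term forces $p=n-j+1$, which gives $a_{m-1}\ln^{n+1}\epsilon\sum_{j=0}^{n}\frac{(-1)^j}{j!\,(n-j+1)!}$; the identity $\sum_{j=0}^{n}\binom{n+1}{j}(-1)^j=(-1)^n$ collapses this to $(-1)^n a_{m-1}\ln^{n+1}\epsilon/(n+1)!$, so after the overall factor $(-1)^n n!$ the contribution to the regularized limit is $a_{m-1}\ln^{n+1}\epsilon/(n+1)$. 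Assembling everything produces an expression in $\epsilon$ for $\reglim{\lambda}{m}\mathcal{M}_a^*[k(t)\ln^n t;1-\lambda]$; since this quantity is a number while the right-hand side of the displayed equation does not actually depend on $\epsilon$, that expression is $\epsilon$-independent and I may let $\epsilon\to 0$. The terms with $l\geq m$ then vanish and what survives is precisely
\[
	\lim_{\epsilon\to 0}\left[\int_{\epsilon}^{a}\frac{k(t)\ln^n t}{t^{m}}\,\mathrm{d}t+(-1)^n n!\sum_{l=0}^{m-2}a_l\,\epsilon^{l-m+1}\sum_{j=0}^{n}\frac{(-1)^j}{j!}\frac{\ln^j\epsilon}{(l-m+1)^{n-j+1}}+\frac{a_{m-1}}{n+1}\ln^{n+1}\epsilon\right].
\]

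To finish, I would identify this limit with the canonical finite-part $\bbint{0}{a}t^{-m}k(t)\ln^n t\,\mathrm{d}t$ straight from the definition \eqref{fpidef}: splitting $\int_\epsilon^a=\int_\epsilon^c+\int_c^a$, expanding $k$ in its Taylor series on $[0,c]$, and integrating term by term with the primitive \eqref{integral} (and with $\int t^{-1}\ln^n t\,\mathrm{d}t=\ln^{n+1}t/(n+1)$ for the term $l=m-1$), one sees that the divergent group $D_\epsilon$ consists exactly of the algebraic-logarithmic terms with $l\leq m-2$ together with $-a_{m-1}\ln^{n+1}\epsilon/(n+1)$; hence the bracket above equals $C_\epsilon=\int_\epsilon^a-D_\epsilon$, and its limit is the finite-part integral, which establishes \eqref{key}. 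The case $k^{(m-1)}(0)=0$ is subsumed: then $a_{m-1}=0$, the term $l=m-1$ drops out, $\lambda=m$ is a removable singularity, and the regularized limit reduces to the Cauchy limit in accordance with Theorem-\ref{equality0}. I expect the main obstacle to be step two --- justifying the term-by-term regularized limit of the infinite series via the uniform-convergence hypothesis of Theorem-\ref{sequence}, and, above all, pinning down the constant Laurent coefficient of the lone singular term through the $\epsilon^{m-\lambda}$ expansion and the accompanying binomial identity. A conceivable alternative, applying the generalized L'Hospital's rule of Section-\ref{genhospital} to the order-$(n+1)$ pole of $\mathcal{M}_a^*[k(t)\ln^n t;1-\lambda]$ directly, would require differentiating the contour representation $n$ times and looks distinctly less economical.
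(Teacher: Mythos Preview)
Your proposal is correct and follows essentially the same route as the paper's own proof: start from the $\epsilon$-representation \eqref{xoxo2}, distribute the regularized limit over the sum by linearity (the paper simply invokes linearity, you more carefully cite Theorem~\ref{sequence}), isolate the lone singular term $l=m-1$, evaluate its regularized limit via the Laurent coefficient of $\epsilon^{m-\lambda}/(m-\lambda)^{n-j+1}$ (the paper does the equivalent step via Proposition~\eqref{coco}), collapse the resulting $j$-sum with the same binomial identity, and then let $\epsilon\to 0$ and identify the surviving bracket with the canonical finite-part by the primitive $\int t^{-1}\ln^n t\,\mathrm{d}t=\ln^{n+1}t/(n+1)$. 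The only differences are cosmetic: your explicit check of uniform convergence for Theorem~\ref{sequence} and your direct Laurent expansion in place of the appeal to \eqref{coco}.
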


\begin{proof}The analytic continuation \eqref{xoxo} is now divergent at $\lambda=m$ for a positive integer $m$. The divergence occurs at $l=m-1$. We isolate this term. We then use the linearity of the regularized limit to arrive at
	\begin{equation}
		\begin{split}
			\reglim{\lambda}{m}\mathcal{M}_a^*[k(t)\ln^n t;1-\lambda]=&\reglim{\lambda}{m}\int_{\epsilon}^{a} \frac{k(t) \ln^n t}{t^{\lambda}}\,\mathrm{d}t \\
			&\hspace{-28mm}+  \sum_{j=0}^n \frac{(-1)^j}{j!}\ln^j\epsilon  \left[\sum_{l=0}^{m-2} a_l \reglim{\lambda}{m} \frac{\epsilon^{l-\lambda+1}}{(l-\lambda+1)^{n-j+1}}
			\right.\\
			&\hspace{-28mm} 
			\left.+ a_{m-1} \reglim{\lambda}{m} \frac{\epsilon^{m-\lambda}}{(m-\lambda)^{n-j+1}}\right.\\ &\hspace{-28mm}\left.+\sum_{j=0}^n \frac{(-1)^j}{j!}\ln^j\epsilon  \sum_{l=m}^{\infty} a_l \reglim{\lambda}{m} \frac{\epsilon^{l-\lambda+1}}{(l-\lambda+1)^{n-j+1}}\right] .
		\end{split} 
	\end{equation}
	All the regularized limit, except the one in the $a_{m-1}$ term, reduce to the Cauchy limit. We appeal to the result \eqref{coco} to obtain
	\begin{equation}
		\reglim{\lambda}{m} \frac{\epsilon^{m-\lambda}}{(m-\lambda)^{n-j+1}}=\frac{\ln^{n-j+1}\epsilon}{(n-j+1)!} .
	\end{equation}
	Then
	\begin{equation}\label{labo}
		\begin{split}
			\reglim{\lambda}{m}\mathcal{M}_a^*[k(t)(\ln t)^n;1-\lambda]=&\int_{\epsilon}^{a} \frac{k(t) \ln^n t}{t^{m}}\,\mathrm{d}t \\
			&\hspace{-24mm}+  \sum_{j=0}^n \frac{(-1)^j}{j!}\ln^j\epsilon  \sum_{l=0}^{m-2} a_l  \frac{\epsilon^{l-m+1}}{(l-m+1)^{n-j+1}}
			+ a_{m-1} \frac{\ln^{n+1}\epsilon}{(n+1)}\\ &\hspace{-24mm}+\sum_{j=0}^n \frac{(-1)^j}{j!}\ln^j\epsilon  \sum_{l=m}^{\infty} a_l  \frac{\epsilon^{l-m+1}}{(l-m+1)^{n-j+1}},
		\end{split} 
	\end{equation}
	where the coefficient of the $a_{m-1}$ term has been simplified using the fact that
	\begin{equation}
		\sum_{j=0}^n \frac{(-1)^j}{j! (n-j+1)!} = \frac{(-1)^n}{(n+1)!} .
	\end{equation}
	
	The first three terms diverge as $\epsilon$ approaches zero and the last term vanishes in the same limit. The left hand side is fixed so that the limit must exist. Then
	\begin{equation}
		\begin{split}
			\reglim{\lambda}{m}\mathcal{M}_a^*[k(t)\ln^n t;1-\lambda]=&\lim_{\epsilon\rightarrow 0}\left[\int_{\epsilon}^{a} \frac{k(t) \ln^n t}{t^{m}}\,\mathrm{d}t\right. \\
			&\hspace{-30mm}\left.+  \sum_{j=0}^n \frac{(-1)^j}{j!}\ln^j\epsilon  \sum_{l=0}^{m-2} a_l  \frac{\epsilon^{l-m+1}}{(l-m+1)^{n-j+1}}
			+ a_{m-1} \frac{\ln^{n+1}\epsilon}{(n+1)}\right],
		\end{split} 
	\end{equation}
	Proceeding in the same manner we have proceeded in Theorem-\ref{bay} and using, in conjunction with \eqref{integral}, the integral
	\begin{equation}
		\int \frac{\ln^r x}{x}\,\mathrm{d}x = \frac{\ln^{r+1}x}{r+1},
	\end{equation}
	we can establish that the second term is the negative of the divergent part of the integral $\int_{\epsilon}^{a} k(t) (\ln t)^n t^{-m}\,\mathrm{d}t$. This establishes equation \eqref{key}. 
\end{proof}

From equation \eqref{labo} and from Theorem-\ref{bek} itself, we obtain the following representation for the finite-part integral.
\begin{corollary}\label{collozero} 
Under the same relevant conditions as in Corollary-\ref{colloanal},
	\begin{equation}\label{xoxo3}
		\begin{split}
			\bbint{0}{a} \frac{k(t) \ln^n t}{t^{m}}  = & \int_{\epsilon}^{a} \frac{k(t) \ln^n t}{t^{m}}\,\mathrm{d}t \\
			&\hspace{-15mm}+  \sum_{j=0}^n \frac{(-1)^j}{j!}\ln^j\epsilon  \sum_{l=0}^{m-2} a_l  \frac{\epsilon^{l-m+1}}{(l-m+1)^{n-j+1}}
			+ a_{m-1} \frac{\ln^{n+1}\epsilon}{(n+1)}\\ &\hspace{-15mm}+\sum_{j=0}^n \frac{(-1)^j}{j!}\ln^j\epsilon  \sum_{l=m}^{\infty} a_l  \frac{\epsilon^{l-m+1}}{(l-m+1)^{n-j+1}} ,
		\end{split}
	\end{equation}
	for all positive integer $m$. 
\end{corollary}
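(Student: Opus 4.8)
The plan is to obtain \eqref{xoxo3} by splicing together two facts already in hand, with no new analytic input required: the identification of the finite-part integral with a regularized limit furnished by Theorem~\ref{bek}, and the explicit fixed-$\epsilon$ formula for that regularized limit that was derived \emph{en route} to proving Theorem~\ref{bek}, namely equation~\eqref{labo}. The only real work is assembling these two statements and verifying the admissible range of the auxiliary parameter $\epsilon$.

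Concretely I would proceed as follows. First, fix positive integers $m$ and $n$ and fix $\epsilon$ with $0<\epsilon<\min(a,\rho_0)$; this is exactly the hypothesis inherited from Corollary~\ref{colloanal}, and the bound $\epsilon<\rho_0$ is what licenses the Taylor expansion $k(t)=\sum_{l\ge 0}a_l t^l$ on a neighbourhood of $[0,\epsilon]$, hence all the manipulations underlying \eqref{labo} (the term-by-term integration, the term-by-term application of the regularized limit justified by Theorems~\ref{twoterm} and~\ref{sequence}, and the evaluation \eqref{coco} of the one genuinely singular term, the $a_{m-1}$ term). Second, invoke equation~\eqref{labo}: it writes $\reglim{\lambda}{m}\mathcal{M}_a^*[k(t)\ln^n t;1-\lambda]$ as the sum of $\int_\epsilon^a k(t)\ln^n t\,t^{-m}\,\mathrm{d}t$, the finite double sum over $0\le l\le m-2$, the isolated term $a_{m-1}\ln^{n+1}\epsilon/(n+1)$, and the convergent tail sum over $l\ge m$ --- which is precisely the right-hand side of \eqref{xoxo3}. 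Third, invoke Theorem~\ref{bek}, i.e.\ \eqref{key}, which gives $\bbint{0}{a}k(t)\ln^n t\,t^{-m}\,\mathrm{d}t = \reglim{\lambda}{m}\mathcal{M}_a^*[k(t)\ln^n t;1-\lambda]$. Chaining these two equalities yields \eqref{xoxo3}. (When $m=1$ the sum over $l\le m-2$ is empty and only the $a_0$ term survives from the singular part; the formula is unaffected.)

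There is no hard step here, so in place of an obstacle I would only flag one point worth stating explicitly: \eqref{labo}, and hence \eqref{xoxo3}, holds for \emph{every} admissible $\epsilon$, not merely for some sufficiently small one. Individual terms on the right of \eqref{xoxo3} depend on $\epsilon$, but their sum does not, because by Theorem~\ref{bek} it equals the $\epsilon$-independent finite-part integral --- the same phenomenon as the $c$-independence noted in the proof of Theorem~\ref{bay}. This is what makes \eqref{xoxo3} a practical computational representation of the finite-part integral at the integer points $\lambda=m$: one may choose any convenient $\epsilon<\min(a,\rho_0)$. The corollary is thus a bookkeeping consequence of Theorem~\ref{bek} together with display~\eqref{labo}, exactly parallel to how Corollary~\ref{colloanal} follows from Theorem~\ref{bay} and \eqref{xoxo2}.
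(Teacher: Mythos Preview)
Your proposal is correct and matches the paper's approach exactly: the paper states the corollary immediately after Theorem~\ref{bek} with the one-line justification ``From equation \eqref{labo} and from Theorem-\ref{bek} itself, we obtain the following representation for the finite-part integral,'' which is precisely the chaining you describe. Your added remarks on the $\epsilon$-independence and the degenerate $m=1$ case are sound elaborations but not required.
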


	% % % % % % % % % % % % % % % % % % 
\begin{figure}
	\includegraphics[scale=0.5]{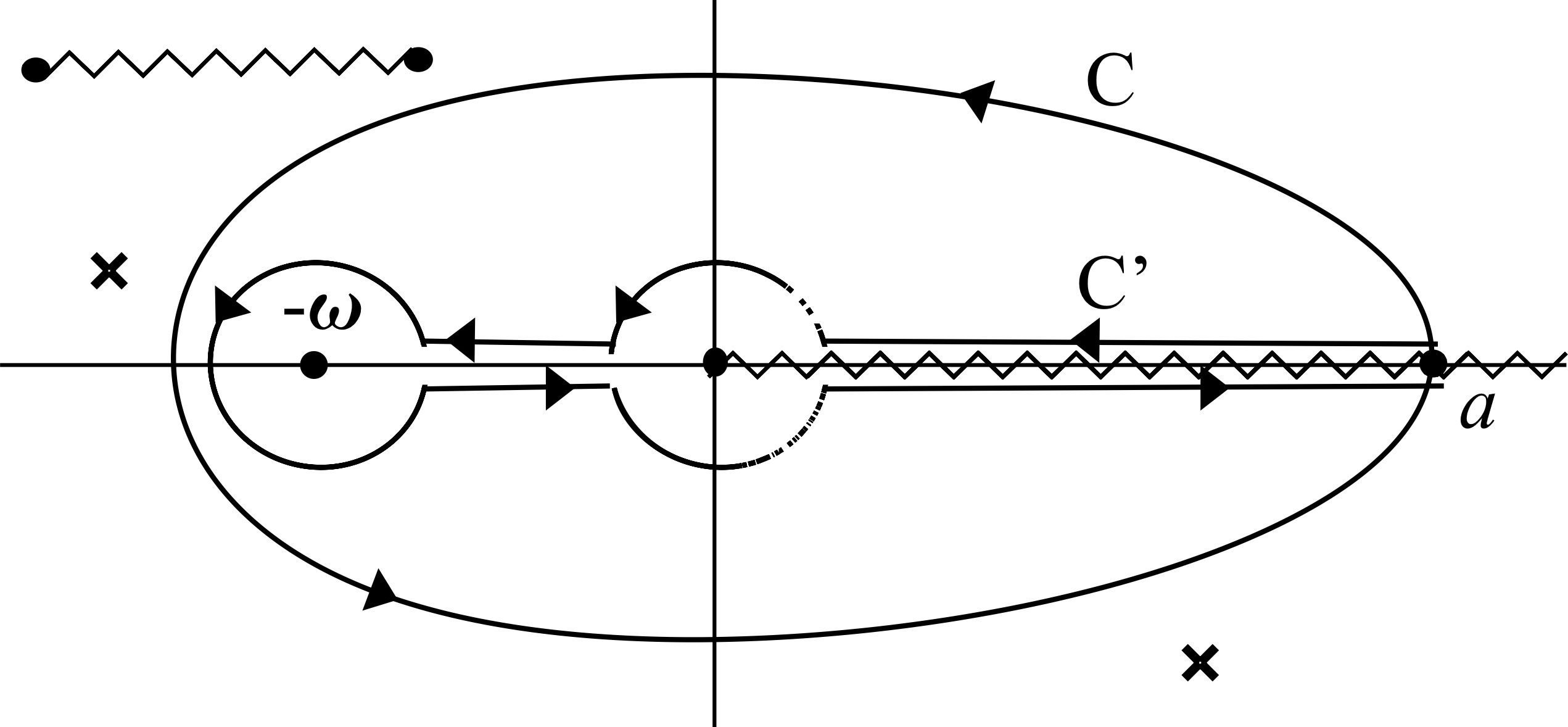}
	\caption{The path of integration  in performing finite-part integration of the Stieltjes transform. All singularities of $k(z)$ must stay to right of $\mathrm{C}$ and $-\omega$ must stay to the left when $\mathrm{C}$ is traversed in the counter-clockwise direction.}
	\label{fig:boat2}
\end{figure}
% % % % % % % % % % % % % % % % % % % % % % % % % % % % % % %

\subsection{Complete Characterization of the Finite-part Integral}
If we apply the definition of the finite-part integral in the strip of analyticity of $\mathcal{M}_a[k(t)\ln^n t;1-\lambda]$ for all non-negative integer $n$, we have
\begin{equation}\label{fpi1extend}
	\begin{split}
		\bbint{0}{a}\frac{k(t)\ln^n t}{t^{\lambda}}\,\mathrm{d}t = \lim_{\epsilon\rightarrow 0}\left(\int_{\epsilon}^a \frac{k(t) \ln^n t}{t^{\lambda}}\,\mathrm{d}t - D_{\epsilon}\right),\;\; d<\mathrm{Re}(\lambda)<1.
	\end{split}
\end{equation}
By hypothesis $k(t)$ belongs to $\mathcal{K}_a$ so that the integral 
\begin{equation}
	\int_0^a \frac{ k(t)\ln^n t}{t^{\lambda}}\,\mathrm{d}t=\lim_{\epsilon\rightarrow 0}\int_{\epsilon}^a \frac{ k(t)\ln^n t}{t^{\lambda}}\,\mathrm{d}t
\end{equation}
exists in the strip $d<\mathrm{Re}(\lambda)<1$. This implies that the divergent part $D_{\epsilon}$ is identically zero. Then we have the equality
\begin{equation}
	\bbint{0}{a}\frac{k(t)\ln^n t}{t^{\lambda}}\,\mathrm{d}t=\mathcal{M}_a[k(t)\ln^n t;1-\lambda],\;\; d<\mathrm{Re}(\lambda)<1 ,
\end{equation}
for all non-negative integer $n$. That is the finite-part is equal to the value of the Mellin integral when it happens to be convergent. 

This means that we can extend the domain of the finite-part integral in the entire strip of analyticity of the analytic continuation of the Mellin transform, which is the half-plane $d<\mathrm{Re}(\lambda)$ when $d$ happens to be finite or the entire complex plane when $d$ is negative infinite. Then as a function, the finite-part integral is given by
\begin{equation}\label{mok}
	\bbint{0}{a}\frac{k(t) \ln^n t}{t^{\lambda}}\,\mathrm{d}t = \left\{
	\begin{array}{ll}
		\mathcal{M}_a[k(t)\ln^n t; 1-\lambda] &,\; d<\mathrm{Re}(\lambda)<1 \\
		\mathcal{M}_a^*[k(t)\ln^n t; 1-\lambda] &,\; 1\leq \mathrm{Re}(\lambda),\;\; \lambda\neq 1, 2, 3, \dots 
		\\
		\reglim{\lambda}{m} \mathcal{M}_a^*[k(t)\ln^n t; 1-\lambda] &,\; \lambda = 1, 2, 3, \dots 
	\end{array}
	\right. 
\end{equation}
for all non-negative integer $n$. Clearly the finite-part integral as a function of $\lambda$ is almost everywhere equal to the analytic continuation of the Mellin transform. They only differ in a set of measure zero, at the isolated points where the analytic continuation develops poles. Because of the relationship between the divergent integral \eqref{div} and the Mellin transform \eqref{mt} given by \eqref{mok}, we  refer to the divergent integral \eqref{div} as Mellin-type divergent integral. 

Since the domain of the analytic continuation extends to the domain of the Mellin transform, we have established the following result.
\begin{theorem}
	The finite-part of Mellin-type divergent integral is the regularized analytic continuation of the corresponding Mellin transform, in particular,
	\begin{equation}
	\bbint{0}{a}\frac{k(t) \ln^n t}{t^{\lambda}}\,\mathrm{d}t =	\overset{\times}{\mathcal{M}_a^*}[k(t)\ln^n t; 1-\lambda],\;\; d<\operatorname{Re}(\lambda) ,
	\end{equation}
for all non-negative integer $n$.
\end{theorem}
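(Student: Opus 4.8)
The plan is to read the statement directly off equation \eqref{mok} together with the definition of the regularized version $\overset{\times}{w}(\lambda)=\reglim{\lambda'}{\lambda}w(\lambda')$ and Theorem-\ref{equality0}. First I would set $w(\lambda)=\mathcal{M}_a^*[k(t)\ln^n t;1-\lambda]$ and recall, from Theorem-\ref{one} for $n=0$ and Theorem-\ref{ako} for arbitrary $n$, that $w(\lambda)$ is analytic throughout the half-plane $d<\operatorname{Re}(\lambda)$ except at the isolated points $\lambda=1,2,3,\dots$, and that among these $\lambda=m$ is a pole (of order $n+1$) precisely when $k^{(m-1)}(0)\neq 0$ and a removable singularity otherwise, the removable value being the Cauchy limit as stipulated after Theorem-\ref{one}.

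Next I would split the half-plane $d<\operatorname{Re}(\lambda)$ into the three regions appearing in \eqref{mok} and match them with the definition of $\overset{\times}{w}$ region by region. On the strip $d<\operatorname{Re}(\lambda)<1$, at every non-integer $\lambda$ with $\operatorname{Re}(\lambda)\geq 1$, and at every integer $m$ with $k^{(m-1)}(0)=0$, the point $\lambda$ is a regular point or a removable singularity of $w$, so Theorem-\ref{equality0} yields $\overset{\times}{w}(\lambda)=w(\lambda)$; in the sub-strip this equals $\mathcal{M}_a[k(t)\ln^n t;1-\lambda]$ and elsewhere it equals $\mathcal{M}_a^*[k(t)\ln^n t;1-\lambda]$, which are the first two lines of \eqref{mok}. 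At an integer $m$ with $k^{(m-1)}(0)\neq 0$, where $\lambda=m$ is a genuine pole, the definition of the regularized version gives immediately $\overset{\times}{w}(m)=\reglim{\lambda}{m}\mathcal{M}_a^*[k(t)\ln^n t;1-\lambda]$, the third line of \eqref{mok}. Comparing with the left-hand side of \eqref{mok}, the finite-part integral and $\overset{\times}{\mathcal{M}_a^*}[k(t)\ln^n t;1-\lambda]$ agree at every point of $d<\operatorname{Re}(\lambda)$, which is the assertion.

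There is no substantive obstacle here---the theorem essentially repackages \eqref{mok}---but the point deserving care is the internal consistency of the removable-singularity case: one must check that the value of the finite-part integral at an integer $m$ with $k^{(m-1)}(0)=0$, which by Theorem-\ref{bek} is $\reglim{\lambda}{m}\mathcal{M}_a^*[k(t)\ln^n t;1-\lambda]$ and which collapses to the Cauchy limit there by Theorem-\ref{equality0}, is the same value that was assigned to $\mathcal{M}_a^*$ at that removable singularity. Since both are that common Cauchy limit, the three cases of \eqref{mok} partition $d<\operatorname{Re}(\lambda)$ without gap or overlap, and the identification of the finite-part integral with $\overset{\times}{\mathcal{M}_a^*}[k(t)\ln^n t;1-\lambda]$ on the whole half-plane follows.
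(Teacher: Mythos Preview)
Your proposal is correct and matches the paper's own treatment: the theorem is stated immediately after \eqref{mok} as a direct consequence of that three-case characterization together with the definition of the regularized version, and the paper offers no separate proof beyond the remark that the domain of $\mathcal{M}_a^*$ extends to that of $\mathcal{M}_a$. Your explicit check of the removable-singularity case via Theorem-\ref{equality0} is a welcome bit of care that the paper leaves implicit.
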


%While the two are almost equal everywhere, their difference is profound because at the poles the analytic continuation is infinite while the finite-part integral has a well-defined finite value there. Nevertheless, the finite-part is completely determined by the Mellin transform $\mathcal{M}_a[k(t)\ln^n t;1-\lambda]$.

\subsection{Contour integral representation of finite-part integral for linear logarithmic case}
We apply our results in deriving the contour integral representation of the finite-part integral for the linear logarithmic case to allow us to perform direct finite-part integration later on the corresponding Stieltjes transform. 
\begin{theorem}
	Let $k(t)$ be in $\mathcal{K}_a$ and $k(z)$ its complex extension. Then for all non-positive integer $\lambda$ with $\mathrm{Re}(\lambda)\leq 1$, 	\begin{equation}\label{n1regular}
		\begin{split}
			\bbint{0}{a}\frac{k(t) \ln t}{t^{\lambda}}\mathrm{d}t = \int_{C}\frac{k(z) }{z^{\lambda}}\, \left[\frac{\log z}{\left(e^{-2\pi\lambda i}-1\right)}  -\frac{2\pi i e^{-2\pi\lambda i}}{(e^{-2\pi\lambda i}-1)^2}\right]\mathrm{d}z,
		\end{split}
	\end{equation}
	and for all positive integer $m$,
	\begin{equation}\label{fpi2c}
		\begin{split}
			\bbint{0}{a} \frac{k(t) \ln t}{t^m}\,\mathrm{d}t =  \frac{1}{2\pi i} \int_C \frac{k(t)}{z^m}\left[\frac{1}{2}\log^2 z - \pi i \log z - \frac{\pi^2}{3}\right]\,\mathrm{d}z ,
		\end{split}
	\end{equation}
where the contour $C$ is as described in Theorem-\ref{one}.
\end{theorem}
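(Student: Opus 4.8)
The plan is to get \eqref{n1regular} by differentiating the non-logarithmic contour representation, and then to obtain \eqref{fpi2c} as the regularized limit of \eqref{n1regular} at $\lambda=m$. The starting point is Theorem~\ref{ako} with $n=1$, namely $\mathcal{M}_a^*[k(t)\ln t;1-\lambda]=-\frac{\mathrm{d}}{\mathrm{d}\lambda}\mathcal{M}_a^*[k(t);1-\lambda]$, together with $\mathcal{M}_a^*[k(t);1-\lambda]=(e^{-2\pi\lambda i}-1)^{-1}\int_C k(z)z^{-\lambda}\,\mathrm{d}z$ from Theorem~\ref{one}. Fixing the inner radius of $C$ at some positive $\epsilon<\rho_0$ keeps $C$ on a compact set away from the branch point $z=0$, where $z^{-\lambda}=e^{-\lambda\log z}$ with $\log z=\ln|z|+\arg z$, $0\le\arg z<2\pi$, and its $\lambda$-derivative $-z^{-\lambda}\log z$ are jointly continuous; hence differentiation under the integral sign is legitimate, and the value is independent of the admissible choice of $C$ by Cauchy's theorem. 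Applying the product rule with $\frac{\mathrm{d}}{\mathrm{d}\lambda}(e^{-2\pi\lambda i}-1)^{-1}=2\pi i\,e^{-2\pi\lambda i}(e^{-2\pi\lambda i}-1)^{-2}$ and $\frac{\mathrm{d}}{\mathrm{d}\lambda}z^{-\lambda}=-z^{-\lambda}\log z$ and collecting terms reproduces exactly the bracket in \eqref{n1regular}. For non-integer $\lambda$, Theorem~\ref{bay} (for $\mathrm{Re}(\lambda)>1$) and the convergence of the Mellin integral inside the strip (the identification \eqref{mok}) equate $\mathcal{M}_a^*[k(t)\ln t;1-\lambda]$ with the finite-part integral, which proves \eqref{n1regular}.

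For \eqref{fpi2c} I would invoke Theorem~\ref{bek} with $n=1$, so that the finite-part integral at $\lambda=m$ equals $\reglim{\lambda}{m}\mathcal{M}_a^*[k(t)\ln t;1-\lambda]$, and evaluate this from \eqref{n1regular}. Writing the right side of \eqref{n1regular} over the common denominator $(e^{-2\pi\lambda i}-1)^2$, I rationalize $\mathcal{M}_a^*[k(t)\ln t;1-\lambda]=f(\lambda)/g(\lambda)$ with
\[
g(\lambda)=(e^{-2\pi\lambda i}-1)^2,\qquad f(\lambda)=(e^{-2\pi\lambda i}-1)\int_C\frac{k(z)\log z}{z^{\lambda}}\,\mathrm{d}z-2\pi i\,e^{-2\pi\lambda i}\int_C\frac{k(z)}{z^{\lambda}}\,\mathrm{d}z ,
\]
both entire in $\lambda$, with $g$ having a double zero at $\lambda=m$. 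When $k^{(m-1)}(0)\neq0$ this is a genuine double pole, and the double-pole formula \eqref{reglimdouble} (equivalently Lemma~\ref{representations2} with $n=2$) applies. I would compute the elementary values $g''(m)=-8\pi^2$, $g'''(m)=48\pi^3 i$, $g''''(m)=224\pi^4$, together with $f(m)$, $f'(m)$, $f''(m)$, the last two obtained by repeated use of $\frac{\mathrm{d}}{\mathrm{d}\lambda}z^{-\lambda}=-z^{-\lambda}\log z$ so that they are contour integrals carrying the weights $\log z$ and $\log^2 z$; using $\int_C k(z)z^{-m}\,\mathrm{d}z=2\pi i\,k^{(m-1)}(0)/m!$ and substituting into \eqref{reglimdouble}, the accumulated powers of $2\pi i$ cancel and leave precisely the bracket $\tfrac12\log^2 z-\pi i\log z-\pi^2/3$ with the overall factor $1/(2\pi i)$, which is \eqref{fpi2c}.

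The removable case $k^{(m-1)}(0)=0$ needs only a short remark: then $f(m)=f'(m)=0$, so $f/g$ is regular at $m$ (consistent with Theorem~\ref{ako}), the regularized limit reduces to the Cauchy limit $f''(m)/g''(m)$, and the same computation specializes---the $\pi^2/3$ term drops out because $\int_C k(z)z^{-m}\,\mathrm{d}z=0$, yet \eqref{fpi2c} still holds since its third term is then absent. Alternatively, both subcases can be treated uniformly by deforming $C$ to $C'$ directly inside the right side of \eqref{fpi2c}, evaluating $\int_{C_\epsilon}$ term by term from $k(z)=\sum_l a_l z^l$ with $z=\epsilon e^{i\theta}$ and $\log z=\ln\epsilon+i\theta$, and matching the diverging part against the canonical definition \eqref{fpidef} of the finite-part, exactly as was done for \eqref{poleIso}.

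The main obstacle is the bookkeeping in the second paragraph: forming $f'(m)$ and $f''(m)$ correctly as contour integrals while tracking the $0\le\arg z<2\pi$ branch of $\log z$ on $C$, and then reducing the double-pole expression \eqref{reglimdouble}---a computation with many factors of $2\pi i$---to the clean coefficients $\tfrac12$, $-\pi i$, $-\pi^2/3$. There is no conceptual difficulty here; a secondary point requiring a sentence is the justification of differentiation under the integral sign and of the independence of all the contour integrals from the particular admissible choice of $C$.
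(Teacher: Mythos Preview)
Your proposal is correct and follows the same overall strategy as the paper: obtain \eqref{n1regular} by differentiating the $n=0$ contour representation via Theorem~\ref{ako}, then recover \eqref{fpi2c} as the regularized limit at $\lambda=m$ via Theorem~\ref{bek}. The execution differs in one detail. The paper splits the right side of \eqref{n1regular} as $F_1(\lambda)+F_2(\lambda)$, where $F_1$ carries the $\log z$ term (simple pole at $m$) and $F_2$ the remaining term (double pole at $m$ when $k^{(m-1)}(0)\neq0$), invokes linearity of the regularized limit, and applies the simple-pole formula \eqref{reglimsimple} and the double-pole formula \eqref{reglimdouble} separately before adding. You instead place everything over the common denominator $(e^{-2\pi\lambda i}-1)^2$ and apply \eqref{reglimdouble} once. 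Your route is slightly more economical and, as you note, handles the removable case $k^{(m-1)}(0)=0$ uniformly (since then $f(m)=f'(m)=0$ and \eqref{reglimdouble} collapses to $f''(m)/g''(m)$), whereas the paper re-rationalizes $F_2$ with a simple-pole denominator in that case. The trade-off is that the paper's split keeps each derivative computation shorter, while yours concentrates all the bookkeeping you flag as the ``main obstacle'' into one larger calculation.
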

\begin{proof}
	The analytic continuation of the Mellin transform for $n=1$ can be obtained in at least two ways. One is by proceeding in the same manner that we have arrived at the analytic continuation for $n=0$ by considering the contour integral $\int_{C} z^{-\lambda}k(z) \log z\,\mathrm{d}z$ and deforming the contour to the keyhole contour under the condition $\mathrm{Re}(\lambda)<1$. Second is by differentiation of the analytic continuation for the $n=0$ case in accordance with Theorem-\ref{ako}. Both yield the analytic continuation
	\begin{equation}\label{helo}
		\begin{split}
			\mathcal{M}_a^*[k(t)\ln t;1-\lambda] = \int_{C}\frac{k(z) }{z^{\lambda}}\, \left[\frac{\log z}{\left(e^{-2\pi\lambda i}-1\right)}  -\frac{2\pi i e^{-2\pi\lambda i}}{(e^{-2\pi\lambda i}-1)^2}\right]\mathrm{d}z.
		\end{split}
	\end{equation} 
	This is analytic everywhere in the strip $\mathrm{Re}(\lambda)\geq 1$ except possibly at positive integers $\lambda=m=1, 2, 3\dots$. If $k^{(m-1)}(0)\neq 0$, $\lambda=m$ is a double pole; otherwise, $m$ is a removable singularity in accordance with Theorem-\ref{ako}. For $\mathrm{Re}(\lambda)\geq 1$  and $\lambda\neq 1, 2, 3, \dots$, the finite-part coincide with the value of the analytic continuation there. Then the finite-part integral assumes the contour integral representation \eqref{n1regular}.
	
	Now let $\lambda=m$ and $k^{(m-1)}(0)\neq 0$. Under this condition $\lambda=m$ is a double pole of the analytic continuation. We write the analytic continuation in the form
	\begin{equation}\label{mimi}
		\mathcal{M}^*_a[k(t)\ln t;1-\lambda] = F_1(\lambda)+F_2(\lambda)
	\end{equation}
	where
	\begin{equation}\label{mimi2}
		F_1(\lambda)= \frac{1}{\left(e^{-2\pi\lambda i}-1\right)} \int_C \frac{k(t)\log z}{z^{\lambda}}\,\mathrm{d}z,\;\; F_2(\lambda)= \frac{-2\pi i e^{-2\pi\lambda i}}{\left(e^{-2\pi\lambda i}-1\right)^2} \int_C \frac{k(t)}{z^{\lambda}}\,\mathrm{d}z.
	\end{equation}
	Utilizing the linearity of the regularized limit, the regularized limit is given by
	\begin{equation}\label{bebot}
		\reglim{\lambda}{m}\mathcal{M}^*_a[k(t)\ln t;1-\lambda] = \reglim{\lambda}{m}F_1(\lambda) + \reglim{\lambda}{m}F_2(\lambda) .
	\end{equation}
	
	We now compute separately the limits. First for $F_1(\lambda)$. We rationalize $F_1(\lambda)$ in the form 
	\begin{equation*}
		F_1(\lambda)=\frac{f_1(\lambda)}{g_1(\lambda)},
	\end{equation*}
	where
	\begin{equation}
		f_1(\lambda)=\int_C \frac{k(z)\log z}{z^{\lambda}}\,\mathrm{d}z, \;\;\; g_1(z)= \left(e^{-2\pi\lambda i }-1\right) .
	\end{equation}
	Now $f_1(m)$ cannot vanish so that $\lambda=m$ is a simple pole of $F_1(\lambda)$. The regularized limit is then given by equation \eqref{rv0}. Performing the indicated differentiations at $\lambda=m$, we obtain the regularized limit 
	\begin{eqnarray}\label{reg1}
		\reglim{\lambda}{m}\frac{f_1(\lambda)}{g_1(\lambda)} 
		= \frac{1}{2\pi i} \int_C \frac{k(t)\log^2 z}{z^m}\,\mathrm{d}z - \frac{1}{2}\int_C \frac{k(z)\log z}{z^m}\,\mathrm{d}z , 
	\end{eqnarray}
	for positive integers $m$. 
	
	Now we compute for the regularized limit for $F_2(\lambda)$. We rationalize $F_2(\lambda)$ in the form
	\begin{equation*}
		F_2(\lambda)=\frac{f_2(\lambda)}{g_2(\lambda)},
	\end{equation*}
	where
		\begin{equation}
		f_2(\lambda)=-2\pi i e^{-2\pi i\lambda} \int_C \frac{k(z)}{z^{\lambda}}\,\mathrm{d}z, \;\;\; g_2(\lambda)= (e^{-2\pi \lambda i}-1)^2 .
	\end{equation}
	Since $\lambda=m$ is a double pole, the regularized limit, according to equation \eqref{reglimdouble}, is given by 
	\begin{equation}
		\begin{split}
		\reglim{\lambda}{m} \frac{f_2(\lambda)}{g_2(\lambda)} &= \frac{f_2''(m)}{g_2''(m)}  -\frac{2}{3} \frac{f_2'(m) g_2'''(m)}{(g_2''(m))^2}\\
		&\hspace{12mm} + \frac{f_2(m) \left(4 (g_2'''(m))^2 - 3 g_2''(m) g_2''''(m)\right)}{18 (g_2''(m))^3} . 
		\end{split}
	\end{equation}
	Performing the indicated differentiations at $\lambda=m$, we obtain
	\begin{eqnarray}
		\reglim{\lambda}{m} \frac{f_2(\lambda)}{g_2(\lambda)} 
		= \frac{1}{12 (2\pi i)} \int_C \frac{k(z)}{z^m}\,\mathrm{d}z - \frac{1}{2(2\pi i)} \int_C \frac{k(z)\log^2 z}{z^m}\,\mathrm{d}z.\label{reg2}
	\end{eqnarray}
	
	Finally we compute for the regularized limit \eqref{bebot} by adding equations \eqref{reg1} and \eqref{reg2}. The result is
	\begin{equation}\label{keke2}
		\reglim{\lambda}{m}\mathcal{M}^*_a[k(t)\ln t;1-\lambda] = \frac{1}{2\pi i} \int_C \frac{k(t)}{z^m}\left[\frac{1}{2}\log^2 z - \pi i \log z - \frac{\pi^2}{3}\right]\,\mathrm{d}z,
	\end{equation}
	for $k^{(m-1)}(0)\neq 0 $. Theorem-\ref{bek} asserts that \eqref{keke2} is equal to finite-part integral. Then the right hand side of \eqref{keke2} provides the contour integral representation of the finite-part integral. This proves \eqref{fpi2c} for $k^{(m-1)}(0)\neq 0$ or at the double poles of the analytic continuation of the Mellin transform. 
	
	If it happens that $k^{(m-1)}(0)=0$, the point $\lambda=m$ is a removable singularity according to Theorem-\ref{ako}. This can be seen by inspection of the expression \eqref{helo} for the analytic continuation. We may use the Cauchy limit to obtain the desired regularized limit; however, it is not convenient to do so. We exploit again the linearity of the regularized limit on the decomposition \eqref{mimi} only this time $F_2(\lambda)$ has a simple pole at $\lambda=m$. The regularized limit for $F_1(\lambda)$ is already given by \eqref{reg1}. For $F_2(\lambda)$, we rationalize it as
	\begin{equation}
		F_2(\lambda)= \frac{\tilde{f}_2(\lambda)}{\tilde{g}_2(\lambda)},
	\end{equation}  
	where
	\begin{equation}
		\tilde{f}_2(\lambda) = \frac{1}{(e^{-2\pi \lambda i}-1)} \int_{C} \frac{k(z)}{z^{\lambda}}\,\mathrm{d}z ,\;\;\; \tilde{g}_2(\lambda)=\frac{(e^{-2\pi\lambda i}-1)}{-2\pi i e^{-2\pi\lambda i}},
	\end{equation}
	in which $\tilde{f}_2^{(n)}(m)=\lim_{\lambda\rightarrow m} \tilde{f}_2^{(n)}(\lambda)$ for all non-negative integer $n$. In this form, the simple pole singularity of $F_2(\lambda)$ is clearly manifested. 
	
	Using equation \eqref{reglimsimple} for simple poles and performing the indicated differentiations at $\lambda=m$, we obtain 
	\begin{equation}\label{reg22}
		\reglim{\lambda}{m}\frac{\tilde{f}_2(\lambda)}{\tilde{g}_2(\lambda)} = -\frac{1}{4\pi i} \int_C \frac{k(z) \log^2 z}{z^m}\,\mathrm{d}z 
	\end{equation}
Combining equations \eqref{reg1} and \eqref{reg22}, we arrive at the desired regularized limit 
	\begin{equation}\label{xixi}
		\reglim{\lambda}{m}\mathcal{M}^*_a[k(t)\ln t;1-\lambda] = \frac{1}{2\pi i} \int_C \frac{k(t)}{z^m}\left[\frac{1}{2}\log^2 z - \pi i \log z\right]\,\mathrm{d}z,
	\end{equation}
	for $k^{(m-1)}(0)=0$. Observe that \eqref{xixi} is just the reduction of equation \eqref{fpi2c} when $\int_Cz^{-m}k(z)\mathrm{d}z = 0$ or when $k^{(m-1)}(0)=0$. Since the Cauchy and the regularized limit coincide at removable singularities, equation \eqref{fpi2c} holds for pole and removable singularities or for all positive integer $m$. 
	
\end{proof}

\section*{Part III}
\section{Finite-part Integration of the Stieltjes Transform in the Absence of Logarithmic Singularities}\label{nonlogcase}
In this Section we start to address the second of the two main problems of the paper---the evaluation of the Stieltjes transform by finite-part integration. Our solution proceeds in two steps. First, is to evaluate the Stieltjes transform in the non-logarithmic case using the machinery of finite-part integration; and, second, is to evaluate the transform in the presence of logarithmic singularities at the origin by repeated differentiation and application of the regularized limit on the result for the non-logarithmic case. Here we built the foundation upon which the solution to the Stieltjes transform in the general case can be obtained. While our problem in this Section has been dealt with in \cite{galapon2}, our method of proof here is more powerful than the one used in \cite{galapon2}. Our method here can be applied to the generalized Stieltjes transform to which the method of proof in \cite{galapon2} cannot be applied directly.

Now finite-part integration generally proceeds in two major steps. The first step is deliberately inducing divergent integrals in the given integral, and the second step is recasting the integral in a form that leads into its evaluation in terms of the finite-parts of the induced divergent integrals. The first step involves identifying the divergent integrals, extracting the corresponding finite-part integrals, and obtaining the contour integral representation of the finite-parts. The second step involves extracting the given integral from a contour integral whose functional form is dictated by the contour integral representation of the finite-part integrals; this is followed by the desired term by term integration. The sinew that connects the two steps is the representation of the finite-part integral as a contour integral in the complex plane. 

Here, for every $k(t)$ in $\mathcal{K}_a$, we perform finite-part integration on the Stieltjes transform 
\begin{equation}\label{lnst}
	\int_0^a \frac{k(t) }{t^{\nu}(\omega+t)}\,\mathrm{d}t,
\end{equation}
for $0\leq\mathrm{Re}(\nu)<1$, $|\mathrm{Arg}(\omega)|<\pi$ and $0<a\leq\infty$. To deliberately induce divergent integrals, we introduce the expansion \eqref{bino} back into \eqref{lnst}. The divergent integrals are 
\begin{equation}
	\int_0^a \frac{k(t)}{t^{\nu+k+1}}\,\mathrm{d}t ,
\end{equation}
for all non-negative integer $k$. The appropriate contour integral representation of the finite-part of these divergent integrals is dictated by $\nu$. The case $0< \mathrm{\nu}<1$ corresponds to $\lambda=\nu+k+1$ which is a regular point of the analytic continuation of the Mellin transform; then the appropriate contour integral representation is given by \eqref{que}. On the other hand, the case $\nu=0$ corresponds to $\lambda=k+1$ which is an isolated singularity of the the analytic continuation; then the appropriate contour integral representation is given by \eqref{poleIso}. 

We do not loose generality by assuming that $k(0)\neq 0$. If it happens that $k(0)=0$, then $k(z)=z^m g(z)$ for some positive integer $m$, and $g(z)$ is some analytic function in the interval satisfying $g(0)\neq 0$. Now we expand
\begin{equation}
	\frac{1}{\omega+t} = \sum_{j=0}^{m-1} \frac{(-1)^j \omega^j}{t^{j+1}} + \frac{(-1)^m \omega^m}{t^m (\omega+t)}.
\end{equation} 
Substituting this back into the integral yields
\begin{equation}\label{lnst3}
	\int_0^a \frac{k(t)}{t^{\nu}(\omega+t)}\,\mathrm{d}t = \sum_{j=0}^{m-1} (-1)^j \omega^j \int_0^a t^{m-j-1} g(t) \,\mathrm{d}t +(-1)^m \omega^m \int_0^a \frac{g(t) }{\omega+t}\,\mathrm{d}t . 
\end{equation}
The first term in the right hand side of the equation exists; now the second term involves an integral in the form of the desired Stieltjes transform because $g(0)\neq 0$.

\subsection{$\nu\neq 0$}
\begin{theorem}\label{miyo}
	Let $k(t)$ be in $\mathcal{K}_a$ and $k(z)$ its complex extension. If $\rho_0$ is the distance of the singularity of $k(z)$ nearest to the origin, then 
	\begin{equation}\label{orig1}
		\int_0^a \frac{k(t)}{t^{\nu} (\omega+t)}\,\mathrm{d}t = \sum_{j=0}^{\infty} (-1)^j \omega^j \bbint{0}{a} \frac{k(t)}{x^{j+\nu+1}}\,\mathrm{d}t +\frac{\pi k(-\omega)}{\omega^{\nu}\sin(\pi\nu)} ,\; 
	\end{equation}
for all $\omega$ satisfying $|\omega|<\mathrm{min}(\rho_0,a)$, $|\operatorname{Arg}\omega|<\pi$,  $0<\mathrm{Re}(\nu)<1$, where $\omega^{-\nu}$ takes its principal value. If the Stieltjes integral exists as $a\rightarrow\infty$, then equality \eqref{orig1} also holds for $a=\infty$ for all $|\omega|<\rho_0$ when $k(z)$ has at least one singularity or for all $|\omega|<\infty$ when $k(z)$ happens to be entire.

\end{theorem}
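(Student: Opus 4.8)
The plan is to turn the asserted series identity into a statement about a single contour integral and then read off the ``missing term'' $\pi k(-\omega)/(\omega^{\nu}\sin(\pi\nu))$ as a residue at $z=-\omega$. First I would invoke Theorem-\ref{hello}: since $0<\operatorname{Re}(\nu)<1$, each exponent $\lambda=j+\nu+1$ is non-integer, so
\begin{equation*}
	\bbint{0}{a}\frac{k(t)}{t^{j+\nu+1}}\,\mathrm{d}t = \frac{1}{e^{-2\pi i\nu}-1}\int_{C}\frac{k(z)}{z^{j+\nu+1}}\,\mathrm{d}z ,
\end{equation*}
where $e^{-2\pi i(j+\nu+1)}=e^{-2\pi i\nu}$ has been used. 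Because the integrand $k(z)z^{-j-\nu-1}$ is single-valued and analytic in the punctured disk $0<|z|<\rho_0$, this representation is unchanged if the inner radius of the keyhole $C$ is taken to be \emph{any} $\epsilon<\min(\rho_0,a)$. This freedom is the crux of the argument: I would choose $\epsilon$ with $|\omega|<\epsilon<\min(\rho_0,a)$, which is possible precisely because $|\omega|<\min(\rho_0,a)$.

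On such a contour one has $|z|\ge\epsilon>|\omega|$, so the geometric expansion \eqref{bino} converges uniformly on $C$. Multiplying by $(-1)^j\omega^j$, summing over $j$, and interchanging the summation with the contour integration (justified by the uniform convergence, in the spirit of Theorem-\ref{sequence}) gives
\begin{equation*}
	\sum_{j=0}^{\infty}(-1)^j\omega^j\bbint{0}{a}\frac{k(t)}{t^{j+\nu+1}}\,\mathrm{d}t = \frac{1}{e^{-2\pi i\nu}-1}\int_{C}\frac{k(z)}{z^{\nu}(\omega+z)}\,\mathrm{d}z .
\end{equation*}
Next I would relate this contour integral to the Stieltjes transform itself. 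A keyhole $C$ with inner radius $\epsilon>|\omega|$ winds once around the pole $z=-\omega$, whereas a tight keyhole $C_\delta$ with inner radius $\delta<|\omega|$ does not, and no singularity of $k(z)$ lies in the annulus swept in passing between them because $\epsilon<\rho_0$. Hence, shrinking the inner radius and letting $\delta\rightarrow 0$,
\begin{equation*}
	\int_{C}\frac{k(z)}{z^{\nu}(\omega+z)}\,\mathrm{d}z = (e^{-2\pi i\nu}-1)\int_0^a\frac{k(t)}{t^{\nu}(\omega+t)}\,\mathrm{d}t + 2\pi i\,\frac{k(-\omega)}{(-\omega)^{\nu}} ,
\end{equation*}
where the contribution of the small circle about the origin vanishes in the limit since $\operatorname{Re}(\nu)<1$ (the same estimate as in Lemma-\ref{analnonlog}), and $(-\omega)^{\nu}$ is taken in the branch with $0\le\arg<2\pi$.

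To finish I would divide by $e^{-2\pi i\nu}-1$, identify the left side with the series above, and simplify the residue using $2\pi i/(e^{-2\pi i\nu}-1)=-\pi e^{i\pi\nu}/\sin(\pi\nu)$ together with $(-\omega)^{\nu}=e^{i\pi\nu}\omega^{\nu}$ (valid for all $|\operatorname{Arg}\omega|<\pi$, with $\omega^{\nu}$ the principal branch); the residue then equals $-\pi k(-\omega)/(\omega^{\nu}\sin(\pi\nu))$, and rearranging yields \eqref{orig1}. For $a=\infty$ I would pass to the limit $a\rightarrow\infty$: when the Stieltjes integral converges there, each finite-part integral converges by \eqref{fpidef2} and the series converges for $|\omega|<\rho_0$ (or for all $\omega$ when $k$ is entire, since then $\epsilon$ may be chosen arbitrarily large and $\rho_0=\infty$), so \eqref{orig1} survives the limit.

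The main obstacle is the orientation/winding bookkeeping in the third step: one must check carefully that the ``wide'' keyhole encircles $-\omega$ while the ``tight'' one does not, fix the sign of the residue accordingly, and track the branch of $(-\omega)^{\nu}$, so that the residue reproduces \emph{exactly} the advertised term $\pi k(-\omega)/(\omega^{\nu}\sin(\pi\nu))$. A subsidiary point requiring care is the $\epsilon$-independence of the finite-part representation — this is where $\rho_0$, and hence the stated radius of convergence, genuinely enters — together with the uniform-convergence justification for the term-by-term contour integration, which forces the contour to have inner radius exceeding $|\omega|$ in the first place.
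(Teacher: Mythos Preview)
Your proposal is correct and is essentially the paper's own argument run in the opposite order: the paper starts from the contour integral $\frac{1}{e^{-2\pi i\nu}-1}\int_C \frac{k(z)}{z^{\nu}(\omega+z)}\,\mathrm{d}z$, collapses the keyhole to extract the Stieltjes integral plus the residue at $z=-\omega$, and \emph{then} expands $(\omega+z)^{-1}$ to identify the resulting contour integrals as finite parts; you instead sum the finite-part representations first and only afterwards shrink the keyhole past the pole. The key ingredients --- the representation of Theorem~\ref{hello}, uniform convergence of the geometric series on a contour with inner radius exceeding $|\omega|$, and the residue yielding $\pi k(-\omega)/(\omega^{\nu}\sin\pi\nu)$ --- are identical.

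Two places where the paper is a little more careful than your sketch: (i) it works first with $\omega>0$ and extends to $|\operatorname{Arg}\omega|<\pi$ by analytic continuation, which sidesteps the branch/winding bookkeeping you flag as the main obstacle; (ii) for $a=\infty$ it explicitly bounds the finite parts (via a deformation estimate, equation~\eqref{ineq24}) to get uniform convergence of the series in $a$, so that the limit $a\to\infty$ can be pushed through the sum. Your proposal gestures at both points but would need those details filled in to be complete.
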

\begin{proof}
	We assume in the mean time that $\omega>0$ and later effect analytic continuation to cover the general case. We first consider the case of finite $a$ and $\rho_0$. For $\nu\neq 0$, the finite-part integrals have the contour integral representation
	\begin{equation}\label{ilo}
		\begin{split}
			\bbint{0}{a}\frac{k(t) }{t^{\nu+k+1}}\,\mathrm{d}t = \frac{1}{(e^{-2\pi(\nu+k+1)i}-1)} \int_{C}\frac{k(z) }{z^{\nu+k+1}}\, \mathrm{d}z,
		\end{split}
	\end{equation}
	coming from \eqref{que} at regular points of the analytic continuation. The contour $C$ must satisfy the following conditions: ({\it i}) the singularity of the kernel, $z=-\omega$, stays to the left when $C$ is traversed in the same direction; ({\it ii}) all singularities of $k(z)$ must stay to the right when the contour $C$ is traversed in the positive sense. The former will allow us to perform term by term integration, and the later will allow us to identify contour integrals arising from the term by term integration as finite-part integrals.
	
	We proceed by extracting the Stieltjes transform from a contour integral that is consistent with the contour integral representation of the finite-part integrals. From the contour integral representation \eqref{ilo} of the finite-part integral, the desired contour integral to extract the Stieltjes integral from is
	\begin{equation}
		\frac{1}{(e^{-2\pi\nu i}-1)}\int_{\mathrm{C}} \frac{k(z)}{z^{\nu} (\omega+z)} \,\mathrm{d}z.
	\end{equation}
	Deforming the contour ${C}$ to the contour ${C}'$, the Stieltjes transform assumes the representation
	\begin{equation}\label{mi}
		\begin{split}
			&\int_0^a \frac{k(t)}{t^{\nu} (\omega+t)}\,\mathrm{d}t = \frac{1}{(e^{-2\pi\nu i}-1)}\int_{\mathrm{C}} \frac{k(z)}{z^{\nu} (\omega+z)} \,\mathrm{d}z +\frac{\pi k(-\omega)}{\omega^{\nu} \sin(\pi\nu)},
		\end{split}
	\end{equation}
	where the second term is the residue contribution from the simple pole $z=-\omega$ of the kernel of the transformation.
	
	The next step is to implement term by term integration by performing an expansion of the kernel about $\omega=0$, 
	\begin{equation}\label{expansion}
		\frac{1}{\omega+z} = \sum_{l=0}^{\infty} (-1)^l \frac{\omega^l}{z^{l+1}} ,
	\end{equation}
	which is valid provided $\omega<|z|$. We choose the contour ${C}$ such that $\omega<|z|$ for all $z$ in the contour ${C}$; this implies condition ({\it i}). Under this condition, the expansion \eqref{expansion} converges uniformly along $C$: If $d_0>\omega$ is the distance of the point $z_0$ in $C$ closest to the origin, then $|\omega/z|^k\leq |\omega/d_0|^k$ for all $k$ and for every $z$ in $C$; since $d_0>\omega$, the series $\sum_{k=0}^{\infty}(\omega/d_0)^k$ converges, implying that the expansion \eqref{expansion} uniformly converges in the entire length of $C$. This allows us to introduce the expansion \eqref{expansion} back into \eqref{mi} and perform term-by-term integration to yield
	\begin{equation}\label{rep2}
		\begin{split}
			&\int_0^a \frac{k(t)}{t^{\nu} (\omega+t)}\,\mathrm{d}t = \sum_{l=0}^{\infty}(-1)^l \omega^l \frac{1 z}{(e^{-2\pi\nu i}-1)}\int_{\mathrm{C}} \frac{k(z)}{z^{\nu +l+1} } \,\mathrm{d}z +\frac{\pi k(-\omega)}{\omega^{\nu} \sin(\pi\nu)} .
		\end{split}
	\end{equation} 
	Under condition ({\it ii}), the contour $C$ does not enclose any of the singularities of $k(z)$, so that the contour integrals in \eqref{rep2} are necessarily finite part integrals. We are led to the equality
	\begin{equation}\label{series}
		\begin{split}
			&\int_0^a \frac{k(t)}{t^{\nu} (\omega+t)}\,\mathrm{d}t = \sum_{l=0}^{\infty}(-1)^l \omega^l\bbint{0}{a}\frac{k(t)}{t^{\nu+l+1}}\mathrm{d}t +\frac{\pi k(-\omega)}{\omega^{\nu} \sin(\pi\nu)} .
		\end{split}
	\end{equation}
	Since $a$ falls in the contour $C$, it is necessary that $\omega<a$; also since all the singularities of $k(z)$ are to the right of $C$, it is also necessary that $\omega<\rho_0$; hence $\omega<\mathrm{min}(a,\rho_0)$. 
		
	We now show that the the infinite series of finite-parts in \eqref{series} is absolutely convergent.  First we establish a bound for the finite-parts using the contour integral representation given by equation \eqref{ilo}. Again let $d_0$ be the distance of the point $z_0$ in $C$ that is closest to the origin. Deforming the contour $C$ to $C'$ with the radius of the circle equal to $d_0$ leads to
	\begin{equation}\label{deform}
		\begin{split}
			\bbint{0}{a}\frac{k(t) }{t^{\nu+k+1}}\,\mathrm{d}t =& \int_{d_0}^a\frac{k(t)}{t^{\nu+k+1}}\mathrm{d}t+\frac{1}{\left(e^{-2\pi\nu i}-1\right)}\int_{0}^{2\pi}\frac{k(d_0 e^{i\theta}) }{(d_0e^{i\theta})^{\nu+k+1}}\,  i d_0 e^{i\theta} \mathrm{d}\theta ,
		\end{split}
	\end{equation}
	from which the following bound can be readily derived,
	\begin{equation}\label{ineq24}
		\begin{split}
			\left|\bbint{0}{a}\frac{k(t) }{t^{\nu+k+1}}\,\mathrm{d}t \right| \leq \frac{M(a)}{|d_0^{\nu}|d_0^{k}}  .
		\end{split}
	\end{equation}
where
\begin{equation}\label{ineq23}
	\begin{split}
		M(a)=  \int_{d_0}^a \frac{\left|k(t)\right|}{t}\,\mathrm{d}t + \int_0^{2\pi} \left|\frac{k(d_0 e^{i\theta})}{\left(e^{-2\pi\nu i}-1\right)} i d_0 e^{i\theta} \right|\mathrm{d}\theta ,
	\end{split}
\end{equation}
	for all non-negative integer $k$. Then we arrive at the inequality 
	\begin{equation}\label{ineq}
		\left|\sum_{k=0}^{\infty} (-1)^k \omega^k \bbint{0}{a} \frac{k(t) }{t^{\nu+k+1}}\,\mathrm{d}t\right|\leq  \frac{M(a)}{|d_0^{\nu}|} \sum_{k=0}^{\infty}\left(\frac{\omega}{d_0}\right)^k .
	\end{equation}
	The sum in the right hand side converges whenever $\omega<d_0$.  Now if $\omega<\mathrm{min}(a,\rho_0)$, there always exists a $d_0$ such that $\omega<d_0<\mathrm{min}(a,\rho_0)$. This implies that the left hand side of \eqref{ineq} is finite or the infinite series of finite-parts is absolutely convergent. 
	
	The preceding result holds for finite $a$ only. We now show that equation \eqref{orig1} holds for infinite $a$ as well. By hypothesis the Stieltjes transform exists as $a\rightarrow\infty$, which implies that $t^{-1} k(t)$ is integrable at infinity. Then, from the integral representation of the finite-part integral for $a<\infty$ given by equation \eqref{deform}, the limit as $a\rightarrow\infty$ is seen to exist as a consequence of the existence of the Stieltjes transform. Then the finite-part integral 
	\begin{equation}\label{limfpi}
		\bbint{0}{\infty} \frac{k(t)}{t^{\nu+k+1}}\,\mathrm{d}t = \lim_{a\rightarrow\infty} \bbint{0}{a} \frac{k(t)}{t^{\nu+k+1}}\,\mathrm{d}t
	\end{equation}
	exists for all non-negative integer $k$. Now inequality \eqref{ineq} implies that the series in the right hand side of \eqref{series} is uniformly convergent  for all $a$ in the open interval $(0,\infty)$. The existence of the limit \eqref{limfpi} and the uniform convergence of the series \eqref{series} for all finite $a$ imply the equality
	\begin{equation}
		\lim_{a\rightarrow\infty} \sum_{k=0}^{\infty} (-1)^k \omega^k \bbint{0}{a} \frac{k(t) }{t^{\nu+k+1}}\,\mathrm{d}t= \sum_{k=0}^{\infty} (-1)^k \omega^k \lim_{a\rightarrow\infty} \bbint{0}{a} \frac{k(t)}{t^{\nu+k+1}}\,\mathrm{d}t .
	\end{equation}
	This means that the evaluation \eqref{orig1} of the Stieltjes transform holds for infinite $a$ as well. Since $a$ is already infinite, the condition for convergence of the series is $\omega<\rho_0$ when $k(z)$ has at least one singularity or $\omega<\infty$ when $k(z)$ happens to be entire. This completes the proof of the equality \eqref{series} for all positive $\omega<\mathrm{min}(a,\rho_0)$.  
	
	We now extend the validity of equation \eqref{series} for complex $\omega$ by analytic continuation. The left hand side of \eqref{series} is analytic for all complex $\omega$ provided $|\operatorname{Arg}(\omega)|<\pi$. Let us consider the right hand side. The inequality \eqref{ineq} holds as well for all complex $\omega$ satisfying $|\omega|<\operatorname{min}(a,\rho_0)$; then the infinite series in the right hand side holds for all such $\omega$. Now the second term of the right hand side is analytic for all complex $\omega$ satisfying, at least, $|\omega|<\rho_0$, and $|\operatorname{Arg}(\omega)|<\pi$ when $\omega^{\nu}$ is replaced with its the principal value. Then the entire right hand side is analytic for all complex $\omega$ with $|\omega|<\operatorname{min}(a,\rho_0)$ and  $|\operatorname{Arg}(\omega)|<\pi$. Since both sides of \eqref{series} are equal for all positive $\omega<\operatorname{min}(a,\rho_0)$, then, by the principle of analytic continuation, the equality extends in the entire common domain of analyticity of both sides of the equation, which is the domain consisting of all complex $\omega$ with $|\omega|<\operatorname{min}(a,\rho_0)$ and  $|\operatorname{Arg}(\omega)|<\pi$. This completes the proof of the theorem. 
\end{proof}

\subsection{$\nu=0$}

\begin{theorem}
	Under the same relevant conditions as in Theorem-\ref{miyo},
	\begin{equation}\label{orig2}
		\int_0^a \frac{k(t)}{(\omega+t)}\,\mathrm{d}t = \sum_{j=0}^{\infty} (-1)^j \omega^j \bbint{0}{a} \frac{k(t)}{t^{j+1}}\,\mathrm{d}t -k(-\omega)\operatorname{Log}\omega . 
	\end{equation}
\end{theorem}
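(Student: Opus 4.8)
The plan is to mirror the proof of Theorem-\ref{miyo} step for step, the only structural change being that for $\nu=0$ the induced divergent integrals $\int_0^a k(t)\,t^{-k-1}\,\mathrm{d}t$ correspond to $\lambda=k+1$, an \emph{isolated singularity} of $\mathcal{M}_a^*[k(t);1-\lambda]$, so that the contour integral representation of the finite parts to be fed into the scheme is \eqref{poleIso} rather than \eqref{que}. As in Theorem-\ref{miyo} I would first establish the identity for $\omega>0$ and recover general complex $\omega$ at the end by analytic continuation; the reduction to the case $k(0)\neq 0$ is the one already carried out in \eqref{lnst3}.

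Dictated by \eqref{poleIso}, the contour integral from which to extract the Stieltjes transform is $\tfrac{1}{2\pi i}\int_C \tfrac{k(z)}{\omega+z}\bigl(\log z-i\pi\bigr)\,\mathrm{d}z$, with $C$ the contour of Figure-\ref{fig:boat2} (all singularities of $k(z)$ to the right, $z=-\omega$ to the left). Deforming $C$ to $C'$ exactly as in Theorem-\ref{miyo}: the small circle about the origin contributes $O(\epsilon\log\epsilon)\to 0$; across the branch cut the factor $\log z-i\pi$ increases by $2\pi i$ to $\log z+i\pi$, so the two straight edges combine to $\int_\epsilon^a \tfrac{k(t)}{\omega+t}\,\mathrm{d}t\to\int_0^a \tfrac{k(t)}{\omega+t}\,\mathrm{d}t$; and the simple pole $z=-\omega$ of the kernel contributes $k(-\omega)\bigl(\log(-\omega)-i\pi\bigr)$. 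Because $\log z$ uses the branch $0\le\arg z<2\pi$, for $\omega>0$ one has $\log(-\omega)=\ln\omega+i\pi$, hence $\log(-\omega)-i\pi=\ln\omega=\operatorname{Log}\omega$; and for $|\operatorname{Arg}\omega|<\pi$ in general, $\arg(-\omega)=\operatorname{Arg}\omega+\pi\in(0,2\pi)$, so $\log(-\omega)-i\pi=\ln|\omega|+i\operatorname{Arg}\omega=\operatorname{Log}\omega$. This is precisely the origin of the principal-value logarithm in \eqref{orig2}, and it yields $\int_0^a \tfrac{k(t)}{\omega+t}\,\mathrm{d}t = \tfrac{1}{2\pi i}\int_C \tfrac{k(z)}{\omega+z}(\log z-i\pi)\,\mathrm{d}z - k(-\omega)\operatorname{Log}\omega$.

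Next I would insert the geometric expansion $\tfrac{1}{\omega+z}=\sum_{l\ge 0}(-1)^l\omega^l z^{-l-1}$, uniformly convergent on $C$ once $C$ is taken with $\omega<|z|$ throughout (forcing $\omega<\min(a,\rho_0)$), integrate term by term, and recognize each $\tfrac{1}{2\pi i}\int_C \tfrac{k(z)}{z^{l+1}}(\log z-i\pi)\,\mathrm{d}z$ as $\bbint{0}{a}\tfrac{k(t)}{t^{l+1}}\,\mathrm{d}t$ via \eqref{poleIso}; this produces \eqref{orig2} for $\omega>0$. Absolute convergence of the series, the extension to $a=\infty$, and the analytic continuation to all complex $\omega$ with $|\operatorname{Arg}\omega|<\pi$ and $|\omega|<\min(a,\rho_0)$ then go through verbatim as in Theorem-\ref{miyo}: deforming $C$ to a circle of radius $d_0$ together with $[d_0,a]$ as in \eqref{deform} gives a bound $\bigl|\bbint{0}{a}\tfrac{k(t)}{t^{l+1}}\,\mathrm{d}t\bigr|\le \tilde M(a)\,d_0^{-l}$ with $\tilde M(a)$ independent of $l$ — the extra factor $\log z-i\pi$ only introduces a bounded multiplier on $|z|=d_0$ — so the series is dominated by $\tilde M(a)\sum_l(\omega/d_0)^l$, exactly as in \eqref{ineq}.

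I expect the only genuinely new point, and hence the only possible obstacle, to be the residue/branch bookkeeping — verifying $\log(-\omega)-i\pi=\operatorname{Log}\omega$ and getting its sign right; everything else is a transcription of Theorem-\ref{miyo} with \eqref{que} replaced by \eqref{poleIso}. As a consistency check one can also deduce \eqref{orig2} from \eqref{orig1} by applying $\reglim{\nu}{0}$: equation \eqref{reglimsimple} with $f(\nu)=\pi\omega^{-\nu}$ and $g(\nu)=\sin(\pi\nu)$ gives $\reglim{\nu}{0}\tfrac{\pi}{\omega^\nu\sin(\pi\nu)}=-\operatorname{Log}\omega$, while $\reglim{\nu}{0}\bbint{0}{a}\tfrac{k(t)}{t^{j+\nu+1}}\,\mathrm{d}t=\bbint{0}{a}\tfrac{k(t)}{t^{j+1}}\,\mathrm{d}t$; but this route needs extra care, since the series $\sum_j(-1)^j\omega^j\bbint{0}{a}\tfrac{k(t)}{t^{j+\nu+1}}\,\mathrm{d}t$ itself carries a simple pole at $\nu=0$, so Theorem-\ref{sequence} cannot be applied before splitting that pole off term by term — the direct contour argument above avoids this complication.
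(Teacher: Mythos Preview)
Your proposal is correct and follows essentially the same approach as the paper: start from the contour integral $\tfrac{1}{2\pi i}\int_C \tfrac{k(z)}{\omega+z}(\log z-i\pi)\,\mathrm{d}z$ dictated by \eqref{poleIso}, deform $C$ to $C'$ to extract the Stieltjes integral plus the residue at $z=-\omega$, then expand the kernel and identify the terms as finite-part integrals, with convergence and analytic continuation handled exactly as in Theorem-\ref{miyo}. Your explicit verification that $\log(-\omega)-i\pi=\operatorname{Log}\omega$ for $|\operatorname{Arg}\omega|<\pi$ is a welcome addition; the paper simply asserts the residue term as $-k(-\omega)\ln\omega$ and then replaces $\ln\omega$ by $\operatorname{Log}\omega$ at the end. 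One remark on your aside: your caution about the regularized-limit route is sensible in spirit, but note that the paper itself later uses precisely that term-by-term regularized limit (see the proof of Theorem-\ref{hihi}, equations \eqref{bebe2} and Lemma-\ref{mo}), relying on the fact that uniform convergence along a fixed circle $|\nu|=\rho$ suffices for the contour-integral definition \eqref{crep} of $\reglim{\nu}{0}$, which is weaker than uniform convergence on the full punctured disk.
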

\begin{proof}
Again we initially let $\omega>0$. The relevant finite-parts assume the contour integral representation
\begin{equation}\label{lala}
	\bbint{0}{a} \frac{k(t)}{t^{k+1}}\,\mathrm{d}t = \frac{1}{2\pi i} \int_C \frac{k(z)}{z^{k+1}}\left(\log z - i\pi\right)\,\mathrm{d}t. 
\end{equation}
From \eqref{lala} we extract the Stieltjes transform from the contour integral
	\begin{equation}
		\frac{1}{2\pi i} \int_{\mathrm{C}} \frac{k(z)}{\omega+z} \left(\log z - i \pi \right)\,\mathrm{d}z ,
	\end{equation}
	where the contour $C$ satisfies the same conditions as in equation \eqref{ilo}. Again we then deform the contour $C$ into the contour $C'$, from which the given integral emerges,
	\begin{equation}\label{rep}
		\begin{split}
			\int_0^a \frac{k(t)}{\omega + t}\,\mathrm{d}t =& \frac{1}{2\pi i} \int_{\mathrm{C}} \frac{k(z)}{\omega+z} \left(\log z - i \pi 
			\right)\mathrm{d}z- k(-\omega) \ln\omega .
		\end{split}
	\end{equation} 
	The second term is the contribution coming from the simple pole of the kernel at $z=-\omega$. 
	We then proceed in the same manner as in proving \eqref{orig1} under the same relevant conditions. 	Finally, extending the result away from the real axis leads to replacing $\ln\omega$ with its principal value $\operatorname{Log}(\omega)$, with the restriction $|\operatorname{Arg}(\omega)|<\pi$.
\end{proof}

\section{Finite-part Integration of the Stieltjes Transform in the Presence of Arbitrary Logarithmic Singularities}\label{logcase}
Now we consider the problem of evaluating the Stieltjes transform \eqref{probhere}. Under the condition of uniform convergence of the series, the Stieltjes transform can be evaluated by distributing the integration over the summation,
\begin{equation}
	\int_0^a \frac{h(t)}{\omega+t}\,\mathrm{d}t = \sum_{k=0}^{\infty}\sum_{l=0}^{M(k)} \int_0^a \frac{k_{kl}(t) \ln^l t}{t^{\nu_k}(\omega+t)}\,\mathrm{d}t . 
\end{equation}
Then the problem reduces to evaluating Stieltjes integrals of the form
\begin{equation}\label{components2}
	\int_0^a \frac{k(t)\ln^n t}{t^{\nu}(\omega+t)}\,\mathrm{d}t,\;\;\; 0\leq \mathrm{Re}(\nu)<1 .
\end{equation}
We have already solved the case $n=0$ by explicit finite-part integration. Finite-part integration requires explicit form of the contour integral representations of the finite-part integrals involving powers of the logarithm which may be intractable to deal with. Here we evaluate \eqref{components2} using the result of the non-logarithmic case by repeated differentiation and application of the regularized limit.

\subsection{Case $\nu\neq 0$}
\begin{lemma} \label{xe}
	Let $k(t)$ be in $\mathcal{K}_a$. For all non-negative integer $s$ and positive integer $r$,
	\begin{equation}\label{bo}
		\frac{\mathrm{d}}{\mathrm{d}\nu}\bbint{0}{a}\frac{k(t) \ln^s t}{t^{\nu+r}}\,\mathrm{d}t=-\bbint{0}{a}\frac{k(t) \ln^{s+1} t}{t^{\nu+r}}\,\mathrm{d}t
	\end{equation}
for $0<\mathrm{Re}(\nu)<1$.
\end{lemma}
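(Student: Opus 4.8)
The plan is to differentiate, with respect to $\nu$, the explicit ``$\epsilon$-representation'' of the finite-part integral furnished by Corollary-\ref{colloanal}. Put $\lambda=\nu+r$; since $r$ is a positive integer and $0<\mathrm{Re}(\nu)<1$ we have $\mathrm{Re}(\lambda)>1$ and $\lambda\notin\{1,2,3,\dots\}$, so equation \eqref{xoxo2} applies with $n=s$ and any fixed $\epsilon$ satisfying $0<\epsilon<\min(a,\rho_0)$, its right-hand side being independent of $\epsilon$ (this is the point that lets us differentiate in $\nu$ with $\epsilon$ held constant). Writing
\[
R_n(\lambda)=(-1)^n n!\sum_{l=0}^{\infty} a_l\,\epsilon^{l-\lambda+1}\sum_{j=0}^n\frac{(-1)^j}{j!}\frac{\ln^j\epsilon}{(l-\lambda+1)^{n-j+1}},
\]
equation \eqref{xoxo2} reads $\bbint{0}{a}k(t)\ln^s t\,t^{-\lambda}\,\mathrm{d}t=\int_\epsilon^a k(t)\ln^s t\,t^{-\lambda}\,\mathrm{d}t+R_s(\lambda)$, and $\mathrm{d}/\mathrm{d}\nu=\mathrm{d}/\mathrm{d}\lambda$ because $\lambda=\nu+r$. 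Thus it suffices to show (i) $\frac{\mathrm{d}}{\mathrm{d}\lambda}\int_\epsilon^a k(t)\ln^s t\,t^{-\lambda}\,\mathrm{d}t=-\int_\epsilon^a k(t)\ln^{s+1}t\,t^{-\lambda}\,\mathrm{d}t$, and (ii) $\frac{\mathrm{d}}{\mathrm{d}\lambda}R_s(\lambda)=-R_{s+1}(\lambda)$; adding these and re-reading \eqref{xoxo2} with $n=s+1$ yields \eqref{bo}. For (i) with finite $a$, the integrand and its $\lambda$-derivative $-k(t)\ln^{s+1}t\,t^{-\lambda}$ are continuous on $[\epsilon,a]$ and, for $\lambda$ in a compact neighbourhood of $\nu+r$, are dominated by a fixed integrable function, so differentiation under the integral sign is legitimate; for $a=\infty$ the tail $[1,\infty)$ is handled by the same device used in the proof of Theorem-\ref{co}, namely $|\ln t|^{s+1}\le C_\eta t^\eta$ on $[1,\infty)$ with $\eta>0$ chosen small enough that $\mathrm{Re}(\lambda)-\eta>d$ uniformly over the compact $\lambda$-set, giving an integrable dominating function.

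For (ii), let $F_n(x,\lambda)$ denote the primitive of $x^{l-\lambda}\ln^n x$ given by the right-hand side of \eqref{integral}, so that $R_n(\lambda)=\sum_{l\ge0} a_l F_n(\epsilon,\lambda)$. Since $\partial_x F_n(x,\lambda)=x^{l-\lambda}\ln^n x$, interchanging $x$- and $\lambda$-derivatives gives $\partial_x\!\bigl(\partial_\lambda F_n+F_{n+1}\bigr)=\partial_\lambda(x^{l-\lambda}\ln^n x)+x^{l-\lambda}\ln^{n+1}x=0$, so $\partial_\lambda F_n(x,\lambda)+F_{n+1}(x,\lambda)$ is independent of $x$; being of the form $x^{\,l-\lambda+1}P(\ln x)$ with $l-\lambda+1\neq0$, it must vanish identically (the same conclusion follows directly from the reindexing identity $\tfrac1{(j-1)!}+\tfrac{n-j+1}{j!}=\tfrac{n+1}{j!}$). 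Hence $\partial_\lambda F_n(\epsilon,\lambda)=-F_{n+1}(\epsilon,\lambda)$. Finally, on a small disk about $\nu+r$ that avoids the positive integers one has $|\epsilon^{l-\lambda+1}|=\epsilon^{\,l+1-\mathrm{Re}(\lambda)}\le C\epsilon^l$, $|l-\lambda+1|\ge\delta>0$ for all $l\ge0$, and $|\ln\epsilon|^j\le\max(1,|\ln\epsilon|^{s+1})$, whence $|a_l F_n(\epsilon,\lambda)|\le C'|a_l|\epsilon^l$, which is summable because $\epsilon<\rho_0$; the same bound governs $F_{s+1}$. Therefore $R_s$ and $R_{s+1}$ converge uniformly on that disk, are analytic, and may be differentiated term by term, giving $\frac{\mathrm{d}}{\mathrm{d}\lambda}R_s(\lambda)=\sum_l a_l\,\partial_\lambda F_s(\epsilon,\lambda)=-\sum_l a_l F_{s+1}(\epsilon,\lambda)=-R_{s+1}(\lambda)$.

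The main obstacle is step (ii): organizing the term-by-term differentiation of $R_s$ rigorously and establishing the identity $\partial_\lambda F_n=-F_{n+1}$ cleanly (via the $x$-independence trick above, which sidesteps a tedious manipulation of the double sum over $l$ and $j$). Once that is in place, step (i) and the $a=\infty$ case are routine applications of differentiation under the integral sign with a dominating function, and the final assembly is immediate from \eqref{xoxo2}.
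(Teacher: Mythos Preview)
Your proposal is correct and follows essentially the same route as the paper: start from the $\epsilon$-representation \eqref{xoxo2} of Corollary-\ref{colloanal}, differentiate the proper integral under the sign and the series term by term, and then recognize the result as the same representation with $s$ replaced by $s+1$. The only difference is cosmetic: for step (ii) the paper carries out the differentiation explicitly and then performs the index shift $j\mapsto j+1$ to exhibit the cancellation, whereas you short-circuit that computation with the primitive $F_n$ and the observation that $\partial_\lambda F_n+F_{n+1}$ is an $x$-independent function of the form $x^{l-\lambda+1}P(\ln x)$ and hence zero. Your version also supplies more explicit justification (the dominating function for $a=\infty$, the uniform bound $|a_lF_n(\epsilon,\lambda)|\le C'|a_l|\epsilon^l$) than the paper's one-line appeal to uniform convergence.
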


\begin{proof}
Since $\nu+r$ is not an integer, we have from Corollary-\ref{colloanal} the representation for the relevant finite-part integral,
\begin{equation}\label{be}
	\begin{split}
	\bbint{0}{a}\frac{k(t) \ln^s t}{t^{\nu+r}}\,\mathrm{d}t =& \int_{\epsilon}^a \frac{k(t) \ln^s t}{t^{\nu+r}}\,\mathrm{d}t \\
	&+ (-1)^s s! \sum_{j=0}^s \frac{(-1)^j \ln^j\epsilon}{j!} \sum_{l=0}^{\infty} a_l \frac{\epsilon^{l-\nu-r+1}}{(l-\nu-r+1)^{s-j+1}} .
	\end{split}
\end{equation}
The first term in the right hand side can be differentiated inside the integral and the series can also be differentiated term by term because it is uniformly convergent in $\epsilon$. Differentiating both sides of equation \eqref{be} yields
 \begin{equation}
 	\begin{split}
 		\frac{\mathrm{d}}{\mathrm{d}\nu}\bbint{0}{a}\frac{k(t) \ln^s t}{t^{\nu+r}}\,\mathrm{d}t =&- \int_{\epsilon}^a \frac{k(t) \ln^{s+1} t}{t^{\nu+r}}\,\mathrm{d}t 
 		+ (-1)^{s+1} s! \sum_{l=0}^{\infty} a_l \epsilon^{l-\nu-r+1} \\
 		&\hspace{-20mm}\times \left[\sum_{j=0}^s \frac{(-1)^j \ln^{j+1}\epsilon}{j! (l-\nu-r+1)^{s-j+1}} - \sum_{j=0}^s \frac{(-1)^j(s+1-j)\ln^j\epsilon}{j! (l-\nu-r+1)^{s+1-j+1}} \right].
 	\end{split}
 \end{equation}
Shifting the index in the first sum from $j$ to $j+1$, we find it to be canceled by the $j\ln^j\epsilon$ term in the second sum and only the term $(s+1)\ln^j\epsilon$ term survives. Then
 \begin{equation}
	\begin{split}
		\frac{\mathrm{d}}{\mathrm{d}\nu}\bbint{0}{a}\frac{k(t) \ln^s t}{t^{\nu+r}}\,\mathrm{d}t =&- \int_{\epsilon}^a \frac{k(t) \ln^{s+1} t}{t^{\nu+r}}\,\mathrm{d}t\\
		&\hspace{-25mm}- (-1)^{s+1} (s+1)! \sum_{j=0}^{s+1}\frac{(-1)^j \ln^j \epsilon}{j!}
		\sum_{l=0}^{\infty} a_l \frac{\epsilon^{l-\nu-r+1}}{(l-\nu-r+1)^{(s+1)-j+1}}. 
	\end{split}
\end{equation}
Comparing the right hand side with Corollary-\ref{colloanal} we obtain the desired equality \eqref{bo}. 
 
\end{proof}

\begin{theorem} \label{hi}
Let $k(t)$ be in $\mathcal{K}_a$ and $k(z)$ be its complex extension. If $\rho_0$ is the distance of the singularity of $k(z)$ nearest to the origin, then %for all non-integer $n$,
\begin{equation}\label{bebe}
	\int_0^a\frac{k(t) \ln^n t}{t^{\nu}(\omega+t)}\,\mathrm{d}t = \sum_{j=0}^{\infty} (-1)^j \omega^j \bbint{0}{a} \frac{k(t)\ln^n t}{t^{j+\nu+1}}\,\mathrm{d}t + k(-\omega) \Delta_n(\nu,\omega)
\end{equation} 
for all $\omega$ satisfying $|\omega|<\mathrm{min}(\rho_0,a)$, $|\operatorname{Arg}\omega|<\pi$, for all non-negative integer $n$, and $0<\mathrm{Re}(\nu)<1$, where
\begin{equation}\label{wa1}
	\Delta_n(\nu,\omega)= (-1)^n \frac{\pi}{\omega^{\nu}}\sum_{l=0}^n (-1)^l \binom{n}{l} (\operatorname{Log} \omega)^{n-l}\, D_l(\nu),
\end{equation}
\begin{equation}\label{wa2}
	\begin{split}
		D_l(\nu)=&(-1)^{\left[l/2\right]} \pi^l \csc(\pi\nu) \sum_{k=1}^n\frac{1}{2^k}\sum_{m=0}^k (-1)^m \binom{k}{m}(2m+1)^l\\
		&\times\sum_{p=0}^{\left[(k-\gamma)/2\right]} (-1)^p \binom{k}{2p+\gamma} \cot^{2p+\gamma}(\pi\nu),\;\;\; \gamma=\frac{1-(-1)^l}{2}. 
	\end{split}
\end{equation}
If the Stieltjes integral exists as $a\rightarrow\infty$, then equality \eqref{bebe} also holds for $a=\infty$ for all $|\omega|<\rho_0$ when $k(z)$ has at least one singularity or for all $|\omega|<\infty$ when $k(z)$ happens to be entire. Moreover, it holds that 
\begin{equation}\label{asym1}
	\int_0^a \frac{k(t)\ln^n t}{t^{\nu}(\omega+t)}\,\mathrm{d}t \sim k(0) \Delta_n(\nu,\omega),\;\; \omega\rightarrow 0 .
\end{equation}
\end{theorem}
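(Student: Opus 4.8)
The plan is to derive \eqref{bebe} by differentiating the non‑logarithmic evaluation \eqref{orig1} of Theorem~\ref{miyo} exactly $n$ times with respect to the parameter $\nu$. For a fixed admissible $\omega$, both sides of \eqref{orig1} are analytic functions of $\nu$ on the strip $0<\operatorname{Re}(\nu)<1$: the left side because $t^{-\nu}k(t)(\omega+t)^{-1}$ is analytic in $\nu$ with a $\nu$-locally-uniform integrable majorant; the series on the right because the bound $\left|\bbint{0}{a}k(t)\,t^{-(\nu+k+1)}\,\mathrm{d}t\right|\le M(a)/(|d_0^{\nu}|\,d_0^{k})$ established in the proof of Theorem~\ref{miyo}, in which $M(a)$ stays bounded on compact $\nu$-subsets of the strip, forces uniform convergence on such compacta; and the residue term manifestly. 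Equality of analytic functions then forces equality of all $\nu$-derivatives, which may be evaluated by differentiating under the integral sign on the left, term by term in the series (Weierstrass), and directly in the residue term.

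Carrying out the differentiation: on the left one gets $(-1)^n\int_0^a t^{-\nu}k(t)\ln^n t\,(\omega+t)^{-1}\,\mathrm{d}t$; inside the series Lemma~\ref{xe}, applied $n$ times, gives $\frac{\mathrm{d}^n}{\mathrm{d}\nu^n}\bbint{0}{a}k(t)\,t^{-(j+\nu+1)}\,\mathrm{d}t=(-1)^n\bbint{0}{a}k(t)\ln^n t\,t^{-(j+\nu+1)}\,\mathrm{d}t$; and for the residue term, writing it as $\pi k(-\omega)\,\omega^{-\nu}\csc(\pi\nu)$ and using the Leibniz rule together with $\frac{\mathrm{d}^{n-l}}{\mathrm{d}\nu^{n-l}}\omega^{-\nu}=(-\operatorname{Log}\omega)^{n-l}\omega^{-\nu}$ (with $\ln\omega$ replaced by $\operatorname{Log}\omega$ upon the usual passage to complex $\omega$, $|\operatorname{Arg}\omega|<\pi$). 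Dividing the differentiated identity through by $(-1)^n$ yields \eqref{bebe} with
\[
k(-\omega)\,\Delta_n(\nu,\omega)=(-1)^n\pi\,k(-\omega)\,\sum_{l=0}^{n}\binom{n}{l}(-\operatorname{Log}\omega)^{\,n-l}\,\omega^{-\nu}\,\frac{\mathrm{d}^{l}}{\mathrm{d}\nu^{l}}\csc(\pi\nu).
\]

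The substantive step, and the one I expect to be the main obstacle, is to show that this Leibniz expansion collapses to the closed form \eqref{wa1}--\eqref{wa2}: after pulling out the $(\operatorname{Log}\omega)^{\,n-l}$ factors there remains $\frac{\mathrm{d}^{l}}{\mathrm{d}\nu^{l}}\csc(\pi\nu)$, which I would compute either from the recursion $\frac{\mathrm{d}}{\mathrm{d}\nu}\csc(\pi\nu)=-\pi\csc(\pi\nu)\cot(\pi\nu)$, $\frac{\mathrm{d}}{\mathrm{d}\nu}\cot(\pi\nu)=-\pi(1+\cot^{2}(\pi\nu))$ (so that the $l$-th derivative is $\csc(\pi\nu)$ times a polynomial in $\cot(\pi\nu)$), or, more economically, from the exponential expansion $\csc(\pi\nu)=2i\sum_{k\ge 0}e^{-i\pi(2k+1)\nu}$, which directly produces the factors $(2m+1)^{l}$ and $2^{-k}$ appearing in \eqref{wa2}; separating even and odd powers of $\cot(\pi\nu)$ then accounts for the parity split $\gamma=(1-(-1)^{l})/2$ and the sign $(-1)^{[l/2]}$. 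This is a lengthy but routine bookkeeping that I would not grind through here.

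Finally, the claims about $a=\infty$ and the asymptotic relation \eqref{asym1} are quick. The extension to $a=\infty$ follows exactly as in Theorem~\ref{miyo}: each $\bbint{0}{a}k(t)\ln^n t\,t^{-(j+\nu+1)}\,\mathrm{d}t$ has a limit as $a\to\infty$ because the Stieltjes integral does, and the series converges uniformly in $a$, so the limit passes through the sum. For \eqref{asym1}, as $\omega\to0$ the power series $\sum_{j\ge0}(-1)^{j}\omega^{j}\bbint{0}{a}k(t)\ln^n t\,t^{-(j+\nu+1)}\,\mathrm{d}t$ tends to its $j=0$ term, a finite constant, hence is $O(1)$, whereas $|\Delta_n(\nu,\omega)|\to\infty$ because of the factor $\omega^{-\nu}$ with $\operatorname{Re}(\nu)>0$ (amplified by the powers of $\operatorname{Log}\omega$); since $k(-\omega)\to k(0)$, the term $k(-\omega)\Delta_n(\nu,\omega)$ dominates and \eqref{asym1} follows.
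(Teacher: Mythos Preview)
Your proposal is correct and follows essentially the same route as the paper: differentiate the non-logarithmic identity \eqref{orig1} $n$ times in $\nu$, justify term-by-term differentiation of the series via the uniform bound from Theorem~\ref{miyo}, invoke Lemma~\ref{xe} inside the sum, and apply Leibniz to the residue term. The only difference is that where you sketch two ways to compute $\frac{\mathrm{d}^{l}}{\mathrm{d}\nu^{l}}\csc(\pi\nu)$ and defer the bookkeeping, the paper simply cites a handbook identity for $D_l(\nu)$; the paper also appends a direct, self-contained convergence estimate for the resulting series (via the representation in Corollary~\ref{colloanal}), which is not strictly needed once uniform convergence in $\nu$ has justified the differentiation, but makes the radius of convergence explicit without reference back to Theorem~\ref{miyo}.
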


\begin{proof} First, we consider the $a<\infty$ case. Under the stated conditions, the integral $\int_0^a t^{-\nu} (\omega+t)^{-1} k(t)\ln^n t\,\mathrm{d}t$ is uniformly convergent with respect to the parameter $\nu$. Then we can perform repeated differentiation with respect to $\nu$ inside the integral to obtain
	\begin{equation}\label{po}
		\int_0^a\frac{k(t) \ln^n t}{t^{\nu}(\omega+t)}\,\mathrm{d}t=(-1)^n \frac{\mathrm{d}^n}{\mathrm{d}\nu^{n}} \int_0^a\frac{k(t)}{t^{\nu}(\omega+t)}\,\mathrm{d}t .
	\end{equation}
	We then substitute the non-logarithmic case result \eqref{orig1} in the right hand side of \eqref{po} under the conditions stated in Theorem-\ref{miyo}. The infinite series in \eqref{orig1} is uniformly convergent with respect to $\nu$ in the full range $0<\mathrm{Re}(\nu)<1$; this follows from the bound \eqref{ineq}. Then we can perform term by term differentiation on the right hand side of \eqref{orig1}, 
	\begin{equation}\label{orig1diff}
		\begin{split}
		\int_0^a \frac{k(t) \ln^n t}{t^{\nu} (\omega+t)}\,\mathrm{d}t =& \sum_{j=0}^{\infty} (-1)^j \omega^j (-1)^n \frac{\mathrm{d}^n}{\mathrm{d}\nu^n} \bbint{0}{a} \frac{k(t)}{x^{j+\nu+1}}\,\mathrm{d}t\\
		& + f(-\omega)\pi(-1)^n \frac{\mathrm{d}^n}{\mathrm{d}\nu^n}\left(\omega^{-\nu}\csc(\pi\nu)\right)  .
		\end{split}
	\end{equation}

Given that the summation index $j$ in \eqref{orig1diff} is a non-negative integer, Lemma-\ref{xe} holds. By repeated use of the Lemma, the infinite series in \eqref{orig1diff} assumes the form
\begin{equation}\label{mi1}
    \sum_{j=0}^{\infty} (-1)^j \omega^j (-1)^n \frac{\mathrm{d}^n}{\mathrm{d}\nu^n} \bbint{0}{a} \frac{k(t)}{x^{j+\nu+1}}\,\mathrm{d}t=\sum_{j=0}^{\infty} (-1)^j \omega^j \bbint{0}{a} \frac{k(t)\ln^n t}{x^{j+\nu+1}}\,\mathrm{d}t ;
\end{equation}
by the uniform convergence of the series in \eqref{orig1}, this series converges as well under the same conditions as those in Theorem-\ref{miyo}, which are the same conditions on the current Theorem under consideration. Now by Leibniz rule
\begin{equation}\label{mi2}
    \frac{\mathrm{d}^n}{\mathrm{d}\nu^n}\left(\omega^{-\nu}\csc(\pi\nu)\right)= \frac{1}{\omega^{\nu}}\sum_{l=0}^n (-1)^l \binom{n}{l} (\operatorname{Log} \omega)^{n-l}\frac{\mathrm{d}^l}{\mathrm{d}\nu^l}\csc(\pi\nu).
\end{equation}
The derivative $D_{\nu}^l[\csc(\pi\nu)]$ is precisely the constant $D_l(\nu)$ for all positive integer $l$ \cite[pg. 8, \#9]{yury}. Substituting \eqref{mi1} and \eqref{mi2} back into equation \eqref{orig1diff} yields \eqref{bebe}.

We have appealed to uniform convergence of the result for the non-logarithmic case to arrive at our preceding conclusion. However, we wish now to establish convergence of the series in \eqref{bebe} directly by working on the series \eqref{mi1} itself. For the the case of $a<\infty$ there are two special cases, $\rho_0<a$ and $\rho_0>a$. We now do the $\rho_0<a$ case. Foremost, we establish a bound for the finite-part integrals that appear in the summation. For some positive $\epsilon<\rho_0$, we have from \ref{xoxo2}, 
\begin{equation}\label{hoho}
	\begin{split}
	\bbint{0}{a} \frac{k(t)\ln^n t}{t^{r+\nu+1}}\,\mathrm{d}t=&\int_{\epsilon}^a \frac{k(t)\ln^n t}{t^{r+\nu+1}}\,\mathrm{d}t \\
	 &\hspace{-12mm}+ (-1)^n n! \sum_{l=0}^{\infty} a_l \epsilon^{l-r-\nu}\sum_{j=0}^n \frac{(-1)^j}{j!} \frac{\ln^j\epsilon}{(l-r-\nu)^{n-j+1}} ,
	\end{split}
\end{equation}
with the replacement $\lambda=j+\nu+1$. We have the bound
\begin{equation}\label{belat}
	\begin{split}
		\left|\bbint{0}{a} \frac{k(t)\ln^n t}{t^{r+\nu+1}}\,\mathrm{d}t\right|\leq &\frac{1}{\epsilon^{r+\nu}}\int_{\epsilon}^a \frac{\left|k(t)\ln^n t\right|}{t}\,\mathrm{d}t \\
		&\hspace{-12mm}+ \frac{ n!}{\epsilon^{r+\nu}} \sum_{l=0}^{\infty} \left|a_l\right| \epsilon^{l}\sum_{j=0}^n \frac{1}{j!} \frac{\left|\ln\epsilon\right|^j}{|l-r-\nu|^{n-j+1}} ,
	\end{split}
\end{equation}
where the first term follows from the inequality \begin{equation}\label{ne}
	\int_{\epsilon}^a \frac{ \left|k(t) \ln^n t\right|}{t^{r+\nu+1}}\mathrm{d}t\leq \frac{1}{\epsilon^{r+\nu}}\int_{\epsilon}^a \frac{\left| k(t) \ln^n t\right|}{t}\mathrm{d}t . 
	\end{equation}
	
We now wish to disentangle the double sum in inequality \eqref{belat} and obtain a bound independent of the indexes $l$ and $r$. We do so with the replacement
\begin{equation}\label{bolat}
	\frac{1}{(l-r-\nu)}=\frac{i}{(e^{-2\pi \nu i}-1)}\int_0^{2\pi} e^{i (l-r-\nu)\theta}\mathrm{d}\theta .
\end{equation}
Taking the modulus of both sides of \eqref{bolat}, we obtain the bound
\begin{equation}\label{bound1}
	\frac{1}{|l-r-\nu|}\leq\frac{2\sinh\left(\pi \mathrm{Im}(\nu)\right)}{\mathrm{Im}(\nu) \sqrt{\sin^2\!\left(\pi\mathrm{Re}(\nu)\right)+\sinh^2\!\left(\pi\mathrm{Im}(\nu)\right)}},\;\; \mathrm{Im}(\nu)\neq 0,
\end{equation}
\begin{equation}\label{bound2}
	\frac{1}{|l-r-\nu|}\leq\frac{2\pi}{ |\sin\left(\pi\nu\right)|},\;\; \nu\neq 0,\;\; \mathrm{Im}(\nu) = 0 .
\end{equation}
We collectively denote $M(\nu)$ the right hand sides of \eqref{bound1} and \eqref{bound2}. Notice that $M(\nu)$ is independent of $r$ and $l$.  Then we have the bound
\begin{equation}\label{akoito}
	\begin{split}
		\left|\bbint{0}{a} \frac{k(t)\ln^n t}{t^{r+\nu+1}}\,\mathrm{d}t\right|\leq \frac{M_{\nu}(a,\epsilon)}{\epsilon^{r+\nu}},
	\end{split}
\end{equation}
where
\begin{equation}\label{momo}
	\begin{split}
		M_{\nu}(a,\epsilon)=\int_{\epsilon}^a \frac{k(t)\ln^n t}{t}\,\mathrm{d}t+  n! \sum_{l=0}^{\infty} \left|a_l\right| \epsilon^{l}\sum_{j=0}^n  \frac{\left|\ln\epsilon\right|^j M(\nu)^{n-j+1}}{j!}.
	\end{split}
\end{equation}
The infinite series in the right hand side of \eqref{momo} converges since $\epsilon<\rho_0$.

We can now show that the infinite series converges absolutely. We have the inequality %following bound on the series,
\begin{eqnarray}\label{ahah}
	\left|\sum_{r=0}^{\infty} (-1)^r \omega^r \bbint{0}{a} \frac{\left|k(t)\ln^n t\right|}{t^{r+\nu+1}}\,\mathrm{d}t\right|&\leq & \sum_{r=0}^{\infty} \left|\omega\right|^r \left|\bbint{0}{a} \frac{k(t)\ln^n t}{t^{r+\nu+1}}\,\mathrm{d}t\right| .
\end{eqnarray}
Substituting the bound \eqref{akoito} for the finite-part integrals yields
\begin{eqnarray}\label{nene}
	\left|\sum_{r=0}^{\infty} (-1)^r \omega^r \bbint{0}{a} \frac{\left|k(t)\ln^n t\right|}{t^{r+\nu+1}}\,\mathrm{d}t\right|
	\leq \frac{M_{\nu}(a,\epsilon)}{\epsilon^{\nu}} \sum_{r=0}^{\infty} \left|\frac{\omega}{\epsilon}\right|^r .
\end{eqnarray}
The right hand side of \eqref{nene} converges provided $|\omega|<\epsilon<\rho_0$. Now for every $\omega$ satisfying $|\omega|<\rho_0$ there always exists a positive $\epsilon$ satisfying $|\omega|<\epsilon<\rho_0$. This implies that the infinite series of finite-part integrals converges absolutely under the condition that $|\omega|<\rho_0<a$. 

Now we consider the $a<\rho_0$ case. This encompasses the situation where $k(z)$ is either entire or not. Under this condition, the integral in equation \eqref{hoho} can be evaluated explicitly by expanding $k(t)$ about $t=0$, followed by performing term by term integration. The bound is dominated by the term involving the upper limit of integration $a$ which is proportional $a^{-r}$. Substituting the bound back in inequality \eqref{ahah}, we find that the dominating term in the bound is proportional to
\begin{equation}
	\sum_{k=0}^{\infty} \left|\frac{\omega}{a}\right|^r .
\end{equation}
Thus in order for the sum to converge, it is necessary that $|\omega|<a$. This completes the proof that the condition $|\omega|<\mathrm{min}(\rho_0,a)$ is necessary for the infinite series to converge. 

Finally, we now consider the case for $a=\infty$. Under the hypothesis that the Stieltjes transform exists, it follows that $M_{\nu}(\infty,\epsilon)<\infty$. Then the finite-integral
\begin{equation}\label{xoxoxo}
	\bbint{0}{\infty}\frac{k(t)\ln^n(t)}{t^{r+\nu+1}}\,\mathrm{d}t = \lim_{a\rightarrow\infty}\bbint{0}{a}\frac{k(t)\ln^n(t)}{t^{r+\nu+1}}\,\mathrm{d}t
\end{equation}
exists for all $r$. Since $M_{\nu}(a,\epsilon)\leq M_{\nu}(\infty,\epsilon)$,  the  sum in \eqref{bebe} uniformly converges in $(0,\infty)$.  This, together with the existence of the limit \eqref{xoxoxo}, allows us to take the limit of both sides of equation \eqref{bebe} as $a\rightarrow\infty$, and interchange the summation and limit in the infinite series in the right hand side of equation \eqref{bebe}. This means that equation \eqref{bebe}  holds also when $a$ is replaced with infinity.

Finally, we establish the asymptotic relation \eqref{asym1}. From the exact evaluation of the Stieltjes transform given by equation \eqref{bebe}, we have
\begin{equation}\label{bebeasy}
	\int_0^a\frac{k(t) \ln^n t}{t^{\nu}(\omega+t)}\,\mathrm{d}t = \left(\bbint{0}{a} \frac{k(t)\ln^n t}{t^{\nu+1}}\,\mathrm{d}t + O(\omega) \right)+  \Delta_n(\nu,\omega) (k(0)+O(\omega)),\;\omega\rightarrow 0.
\end{equation}   
From \eqref{wa1} we have $\Delta_n(\nu,\omega)=O(\omega^{-\nu} \ln^n\omega)$ as $\omega\rightarrow 0$. Thus for positive $\mathrm{Re}(\nu)$, the second term dominates the first term for arbitrarily small $\omega$ and \eqref{asym1} follows.
\end{proof}

\subsubsection*{Example} For $n=1$ equation-\ref{bebe} reduces to the Stieltjes transform
\begin{equation}\label{sum2}
	\int_0^a \frac{k(t)\ln(t)}{t^{\nu}(\omega+t)}\,\mathrm{d}t = \sum_{k=0}^{\infty} (-1)^k \omega^k \bbint{0}{a} \frac{k(t) \ln(t)}{t^{\nu+k+1}}\,\mathrm{d}t + \frac{\pi k(-\omega)}{\omega^{\nu} \sin(\pi\nu)} \left(\pi\cot(\pi\nu) +\operatorname{Log}\omega\right), 
\end{equation} 
true under the conditions of Theorem-\ref{hi}. The asymptotic relation \eqref{asy1} follows from this expression for arbitrarily small $\omega$.

We can arrive at \eqref{sum2} by explicit finite-part integration using the contour integral representation of the finite-part integral given by \eqref{n1regular}. Under the same conditions as in Theorem-\ref{miyo}, we extract the right hand side of \eqref{sum2} from the contour integral 
\begin{equation}
	\int_{\mathrm{C}} \frac{k(z)}{z^{\nu} (\omega+z)} \left[\frac{\log z}{(e^{-2\pi\nu i}-1)} - \frac{2\pi i e^{-2\pi\nu i}}{(e^{-2\pi\nu i}-1)^2}\right]\,\mathrm{d}z. 
\end{equation}
Deforming the contour $C$ to the contour $C'$, the Stieltjes transform takes the representation
\begin{equation}\label{kiki}
	\begin{split}
		&\int_0^a \frac{k(t)\ln t}{t^{\nu} (\omega+t)}\,\mathrm{d}t = \int_{\mathrm{C}} \frac{k(z)}{z^{\nu} (\omega+z)} \left[\frac{\log z}{(e^{-2\pi\nu i}-1)} - \frac{2\pi i e^{-2\pi\nu i}}{(e^{-2\pi\nu i}-1)^2}\right]\,\mathrm{d}z\\
		& \hspace{34mm} +\frac{\pi k(-\omega)}{\omega^{\nu} \sin(\pi\nu)} (\pi\cot(\pi\nu) + \ln \omega),
	\end{split}
\end{equation}
assuming $\omega>0$. The second term is the residue contribution from the simple pole $z=-\omega$ of the kernel of the transformation. Introducing the expansion for $(\omega+z)^{-1}$ about $\omega=0$ back into \eqref{kiki} and distributing the integration, we arrive at \eqref{sum2} upon identifying the contour integrals as the finite-part integrals and replacing the multivalued functions with their principal values.

Explicit finite-part integration shows that the dominant term for small values of $\omega$ comes from the singularity of the kernel of the transformation, and this term is the missing term when the kernel is expanded and integrated term by term followed by naive assignment of the divergent integrals values equal to their finite-parts.

\subsection{Case $\nu=0$}
\begin{lemma} \label{mo}
Let $k(t)$ be in $\mathcal{K}_a$.   For all non-negative integer $n$ and positive integer $r$, 
	\begin{equation}\label{nono}
		\reglim{\nu}{0}\bbint{0}{a}\frac{k(t) \ln^n t}{t^{\nu+r}}\,\mathrm{d}t = \bbint{0}{a}\frac{k(t) \ln^n t}{t^{r}}\,\mathrm{d}t,
	\end{equation}
where $0<\mathrm{Re}(\nu)<1$. 
\end{lemma}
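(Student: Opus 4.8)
The plan is to compute the regularized limit directly from the $\epsilon$-representation of the finite-part integral furnished by Corollary~\ref{colloanal}, mirroring the proof of Lemma~\ref{xe}. Fix $\epsilon$ with $0<\epsilon<\min(a,\rho_0)$ and set $\lambda=\nu+r$; since $0<\mathrm{Re}(\nu)<1$ and $r$ is a positive integer, $\lambda$ is never an integer, so \eqref{xoxo2} (extended, via Lemma~\ref{analnonlog}, to a full punctured neighbourhood of $\nu=0$) writes $\bbint{0}{a}t^{-\nu-r}k(t)\ln^n t\,\mathrm{d}t$ as the proper integral $\int_{\epsilon}^{a}t^{-\nu-r}k(t)\ln^n t\,\mathrm{d}t$ plus the double series $(-1)^n n!\sum_{l\ge0}a_l\epsilon^{l-\nu-r+1}\sum_{j=0}^n\frac{(-1)^j}{j!}\frac{\ln^j\epsilon}{(l-\nu-r+1)^{n-j+1}}$. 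As a function of $\nu$ this is analytic on a punctured disc about $\nu=0$, with an isolated singularity there located entirely in the term $l=r-1$, so $\reglim{\nu}{0}$ may be applied.

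Next I would distribute $\reglim{\nu}{0}$ across the expression: over the finite $j$-sum by Theorem~\ref{twoterm} and over the infinite $l$-sum by Theorem~\ref{sequence}. Termwise evaluation then runs as follows. The proper integral is analytic in $\nu$, so by Theorem~\ref{equality0} its regularized limit coincides with the Cauchy limit $\int_{\epsilon}^{a}t^{-r}k(t)\ln^n t\,\mathrm{d}t$. Each term with $l\neq r-1$ is analytic at $\nu=0$, so its regularized limit is just its value at $\nu=0$, i.e.\ the naive substitution $\nu=0$. For the lone term $l=r-1$, writing $(l-\nu-r+1)^{-(n-j+1)}=(-1)^{n-j+1}\nu^{-(n-j+1)}$ and applying \eqref{coco} with $w(\nu)=\epsilon^{-\nu}$ (analytic, $w^{(k)}(0)=(-\ln\epsilon)^k$) gives $\reglim{\nu}{0}\epsilon^{-\nu}(-\nu)^{-(n-j+1)}=\ln^{n-j+1}\epsilon/(n-j+1)!$; summing over $j$ and using the identity $\sum_{j=0}^n(-1)^j/(j!(n-j+1)!)=(-1)^n/(n+1)!$ (already invoked in Theorem~\ref{bek}) collapses this term to $a_{r-1}\ln^{n+1}\epsilon/(n+1)$.

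Reassembling, the proper integral, the $l\neq r-1$ terms at $\nu=0$, and the term $a_{r-1}\ln^{n+1}\epsilon/(n+1)$ are exactly the pieces appearing in the $\epsilon$-representation of $\bbint{0}{a}t^{-r}k(t)\ln^n t\,\mathrm{d}t$ recorded in Corollary~\ref{collozero} with $m=r$; their sum is therefore $\bbint{0}{a}t^{-r}k(t)\ln^n t\,\mathrm{d}t$, which is \eqref{nono}. The step I expect to be the main obstacle is the termwise passage of the regularized limit through the infinite $l$-series: Theorem~\ref{sequence} requires uniform convergence of $\sum_l$ in a punctured disc about $\nu=0$, which I would secure by isolating the single singular term $l=r-1$ and dominating the tail by $\sum_l|a_l|\epsilon^{l-r+1}$ (convergent since $\epsilon<\rho_0$), using that $\epsilon^{-\nu}$ and the factors $(l-\nu-r+1)^{-(n-j+1)}$ for $l\neq r-1$ remain uniformly bounded as $\nu\to0$. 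A shorter alternative avoids the series altogether: by Theorem~\ref{bay} the finite-part integral equals $\mathcal{M}^*_a[k(t)\ln^n t;1-\lambda]$ at every non-integer $\lambda$, hence shares its Laurent expansion about $\lambda=r$, so $\reglim{\nu}{0}\bbint{0}{a}t^{-\nu-r}k(t)\ln^n t\,\mathrm{d}t=\reglim{\lambda}{r}\mathcal{M}^*_a[k(t)\ln^n t;1-\lambda]$, which equals $\bbint{0}{a}t^{-r}k(t)\ln^n t\,\mathrm{d}t$ by Theorem~\ref{bek}.
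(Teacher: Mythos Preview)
Your proof is correct and follows essentially the same route as the paper: both start from the $\epsilon$-representation of Corollary~\ref{colloanal}, isolate the lone singular term $l=r-1$, evaluate its regularized limit via \eqref{coco}, let the remaining terms pass to their Cauchy limits, and then identify the result with Corollary~\ref{collozero}. Your added care in invoking Theorem~\ref{sequence} for the infinite $l$-sum and your alternative shortcut through Theorems~\ref{bay} and~\ref{bek} are both sound and go slightly beyond what the paper writes out.
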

\begin{proof} Under the given conditions on $n$ and $r$, the domain of analyticity of the finite-part integral 
	\begin{equation}
		\bbint{0}{a} \frac{k(t)\ln^n t}{t^{\nu+r}}\,\mathrm{d}t,\nonumber 
	\end{equation}
	taken as a function of $\nu$, is $-\delta<\mathrm{Re}(\nu)$ for some $\delta>0$, which includes the relevant strip $0<\mathrm{Re}(\nu)<1$. In this domain, the limit $\nu\rightarrow 0$ exists and must equal the desired limit in \eqref{nono}. From equation \eqref{be} the finite-part diverges as $\nu\rightarrow 0$, as expected. This divergence occurs in the infinite series at the term $l=r-1$. Isolating this term and using the linearity of the regularized limit, we have
	\begin{equation}\label{no}
		\begin{split}
			\reglim{\nu}{0}\bbint{0}{a}\frac{k(t) \ln^s t}{t^{\nu+r}}\,\mathrm{d}t =& \reglim{\nu}{0}\int_{\epsilon}^a \frac{k(t) \ln^s t}{t^{\nu+r}}\,\mathrm{d}t + (-1)^s s! \sum_{j=0}^s \frac{(-1)^j \ln^j\epsilon}{j!}\\ &\hspace{-34mm}\times \left[\reglim{\nu}{0}\sum_{l=0,\,l\neq r-1}^{\infty} a_l \frac{\epsilon^{l-\nu-r+1}}{(l-\nu-r+1)^{s-j+1}} +a_{r-1} (-1)^{s-j+1} \reglim{\nu}{0}\frac{\epsilon^{-\nu}}{\nu^{s-j+1}}\right].
		\end{split}
	\end{equation}
The first two regularized limits reduce to the usual Cauchy limit. On the other hand, for the third limit, we have
\begin{equation}
	\reglim{\nu}{0}\frac{\epsilon^{-\nu}}{\nu^{s-j+1}}=(-1)^{s-j+1}\frac{ \ln^{s-j+1}\epsilon}{(s-j+1)!},
\end{equation}
on using \eqref{coco}. Substituting this back into \eqref{no} and comparing the result with Corollary-\ref{collozero}, we arrive at the desired equality \eqref{nono}.
\end{proof}

\begin{theorem}\label{hihi}
	Under the same relevant conditions as in Theorem-\ref{hi},
	\begin{equation}\label{kwa}
		\int_0^a \frac{k(t)\ln^n t}{(\omega+t)}\,\mathrm{d}t=\sum_{j=0}^{\infty}(-1)^j \omega^j \bbint{0}{a}\frac{k(t) \ln^n t}{t^{j+1}} + k(-\omega) \Delta_n(\omega)
	\end{equation}
	 where
	\begin{equation}\label{gen2}
		\Delta_n(\omega)=-\frac{\operatorname{Log}^{n+1}\omega}{n+1} + 2 \cdot n! \sum_{j=1}^{\lceil n/2\rceil} \frac{(2^{2j-1}-1) (-1)^{j} \pi^{2j} B_{2j}}{(2-2j+1)! (2j)!}\,(\operatorname{Log}\omega)^{n-2j+1} ,
	\end{equation}
in which the $B_{2j}$'s are the Bernoulli numbers. Moreover, it holds that
\begin{equation}\label{asym2}
	\int_0^a \frac{k(t) \ln^n t}{(\omega+t)}\,\mathrm{d}t \sim k(0) \Delta_n(\omega),\;\; \omega\rightarrow 0 .
\end{equation}
\end{theorem}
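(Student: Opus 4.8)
The plan is to deduce \eqref{kwa} from the $\nu\neq 0$ evaluation \eqref{bebe} of Theorem-\ref{hi} by passing to the regularized limit $\nu\to 0$, using the linearity of the regularized limit (Theorems-\ref{twoterm} and-\ref{sequence}) together with Lemma-\ref{mo}. Since $|\operatorname{Arg}\omega|<\pi$, the map $t\mapsto k(t)/(\omega+t)$ belongs to $\mathcal{K}_a$, so by Theorem-\ref{co} the left-hand side $\int_0^a t^{-\nu}(\omega+t)^{-1}k(t)\ln^n t\,\mathrm{d}t$ of \eqref{bebe} is analytic at $\nu=0$; hence by Theorem-\ref{equality0} its regularized limit there equals its Cauchy limit, which is the integral $\int_0^a(\omega+t)^{-1}k(t)\ln^n t\,\mathrm{d}t$ appearing on the left of \eqref{kwa}. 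It therefore remains to compute $\reglim{\nu}{0}$ of the right-hand side of \eqref{bebe}.

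For the infinite series I would apply the regularized limit term by term. The one delicate point is that the bound \eqref{akoito} on the individual finite-part integrals carries the factor $M(\nu)$, which grows like $1/|\sin(\pi\nu)|$ (cf.\ \eqref{bound2}) as $\nu\to 0$, so the series does not converge uniformly on a full punctured disc about $\nu=0$ and Theorem-\ref{sequence} does not apply verbatim. However, on any circle $|\nu|=\rho$ with $\rho$ small, $\sin(\pi\nu)$ is bounded away from zero, so \eqref{akoito} is uniform on that circle and $\sum_{j}(-1)^j\omega^j\bbint{0}{a}t^{-\nu-j-1}k(t)\ln^n t\,\mathrm{d}t$ converges uniformly there whenever $|\omega|<\min(\rho_0,a)$; term-by-term integration in the contour-integral representation \eqref{crep} of the regularized limit is then legitimate and yields $\reglim{\nu}{0}\sum_j(\cdots)=\sum_j\reglim{\nu}{0}(\cdots)$. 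By Lemma-\ref{mo}, $\reglim{\nu}{0}\bbint{0}{a}t^{-\nu-j-1}k(t)\ln^n t\,\mathrm{d}t=\bbint{0}{a}t^{-j-1}k(t)\ln^n t\,\mathrm{d}t$, so the series collapses to $\sum_j(-1)^j\omega^j\bbint{0}{a}t^{-j-1}k(t)\ln^n t\,\mathrm{d}t$, which, being the term-by-term evaluation of a convergent series, again converges for $|\omega|<\min(\rho_0,a)$ (one may also check this directly by bounding the $\nu=0$ finite-part integrals via the representation \eqref{xoxo3}, as in \eqref{belat}--\eqref{akoito}).

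For the singular term of \eqref{bebe}, $k(-\omega)$ is independent of $\nu$ and factors out, so I need $\reglim{\nu}{0}\Delta_n(\nu,\omega)$. Writing $\Delta_n(\nu,\omega)=\pi(-1)^n\frac{\mathrm{d}^n}{\mathrm{d}\nu^n}\!\bigl(\omega^{-\nu}\csc(\pi\nu)\bigr)$ as in the derivation leading to \eqref{orig1diff}, and using that the regularized limit of an $n$-th derivative equals $n!$ times the $\nu^n$-coefficient of the Laurent expansion (immediate from the definition and \eqref{coco}), the task reduces to extracting $[\nu^n]\bigl(\omega^{-\nu}\csc(\pi\nu)\bigr)$. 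Multiplying $\omega^{-\nu}=\sum_m(-\operatorname{Log}\omega)^m\nu^m/m!$ by the Laurent expansion $\csc(\pi\nu)=(\pi\nu)^{-1}+\sum_{k\geq1}2(-1)^{k+1}(2^{2k-1}-1)B_{2k}(\pi\nu)^{2k-1}/(2k)!$ and collecting the coefficient of $\nu^n$ reproduces, after simplification, exactly $\Delta_n(\omega)$ of \eqref{gen2} (with the denominator factorial read as $(n-2j+1)!$): the $(\pi\nu)^{-1}$ piece gives the term $-\operatorname{Log}^{n+1}\omega/(n+1)$ and the Bernoulli terms give the sum over $j=1,\dots,\lceil n/2\rceil$. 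Substituting back into \eqref{bebe} establishes \eqref{kwa}; the case $a=\infty$ follows at once since \eqref{bebe} already holds for $a=\infty$ under the hypotheses of Theorem-\ref{hi}.

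Finally, the asymptotic relation \eqref{asym2} is obtained exactly as in the last step of Theorem-\ref{hi}: from \eqref{kwa} the series equals $\bbint{0}{a}t^{-1}k(t)\ln^n t\,\mathrm{d}t+O(\omega)$, while $\Delta_n(\omega)=-\operatorname{Log}^{n+1}\omega/(n+1)+O(\operatorname{Log}^{n-1}\omega)$ grows like $\operatorname{Log}^{n+1}\omega$ as $\omega\to 0$; since $k(-\omega)\to k(0)$, the term $k(-\omega)\Delta_n(\omega)$ dominates and $\int_0^a(\omega+t)^{-1}k(t)\ln^n t\,\mathrm{d}t\sim k(0)\Delta_n(\omega)$. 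The main obstacle I anticipate is precisely the justification of the term-by-term passage to the regularized limit in the infinite series, i.e.\ controlling the non-uniformity coming from the $\csc(\pi\nu)$ singularity at $\nu=0$; once that is handled through the circle argument above, the rest is bookkeeping with the Bernoulli-number expansion of $\csc$.
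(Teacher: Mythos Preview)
Your overall strategy coincides with the paper's: both take the regularized limit $\nu\to 0$ of the evaluation \eqref{bebe}, invoke Lemma-\ref{mo} termwise on the series, and then read off the asymptotic \eqref{asym2} from the leading behavior of $\Delta_n(\omega)$. Your circle argument for justifying the term-by-term regularized limit is in fact more explicit than the paper, which simply invokes linearity at that step and then verifies convergence of the resulting $\nu=0$ series directly from the representation \eqref{xoxo3}.

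The genuinely different ingredient is your computation of $\reglim{\nu}{0}\Delta_n(\nu,\omega)$. The paper rewrites $\frac{\mathrm{d}^l}{\mathrm{d}\nu^l}\csc(\pi\nu)$ via a polygamma identity (Brychkov), splits $\Delta_n(\nu,\omega)$ into three pieces, and takes three separate regularized limits, finally invoking $\zeta(2j)$ in terms of Bernoulli numbers. Your route---observe that $\reglim{\nu}{0}\partial_\nu^n w(\nu)=n!\,[\nu^n]w(\nu)$ and then multiply the Taylor series of $\omega^{-\nu}$ against the Laurent expansion $\csc(\pi\nu)=(\pi\nu)^{-1}+\sum_{k\ge 1}2(-1)^{k+1}(2^{2k-1}-1)B_{2k}(\pi\nu)^{2k-1}/(2k)!$---is more elementary and lands directly on \eqref{gen2} (with the evident typo $(2-2j+1)!\to(n-2j+1)!$ that you flag). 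The paper's polygamma route has the advantage of giving closed-form intermediate expressions that could be reused for other regularized limits, but for this theorem your coefficient-extraction argument is shorter and avoids special-function machinery.
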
 
\begin{proof} Again we first consider the case $a<\infty$. Under the stated conditions, the Stieltjes transform can be obtained from the Cauchy limit
\begin{equation}\label{oo}
    \int_0^a \frac{k(t)\ln^n t}{(\omega+t)}\,\mathrm{d}t=\lim_{\nu\rightarrow 0}\int_0^a \frac{k(t)\ln^n t}{t^{\nu}(\omega+t)}\,\mathrm{d}t,
\end{equation}
owing to the uniform convergence of the integral in the right hand side of \eqref{oo} in its strip of analyticity, which includes $\nu=0$ in its interior. We replace the Cauchy limit with the regularized limit and apply the result \eqref{bebe}. Using the linearity of the regularized limit, we arrive at
\begin{equation}\label{bebe2}
	\int_0^a\frac{k(t) \ln^n t}{(\omega+t)}\,\mathrm{d}t = \sum_{j=0}^{\infty} (-1)^j \omega^j \reglim{\nu}{0} \bbint{0}{a} \frac{k(t)\ln^n t}{t^{j+\nu+1}}\,\mathrm{d}t + k(-\omega) \reglim{\nu}{0}\Delta_n(\nu,\omega).
\end{equation}

First let us evaluate the regularized limit in the second term of \eqref{bebe2}. The form of $\Delta_n(\nu,\omega)$ given by equations \eqref{wa1} and \eqref{wa2} is not convenient to perform the regularized limit on. Instead we use the known identity \cite[pg-9, \#10]{yury}
\begin{equation}
\begin{split}
    \frac{\mathrm{d}^l}{\mathrm{d}\nu^l}\csc(\pi\nu) =& (-1)^{l+1} \frac{l!}{\pi\nu^{l+1}} +\frac{1}{\pi2^{l+1}} \left[\psi^{(l)}\!\left(\frac{1+\nu}{2}\right)-\psi^{(l)}\!\left(\frac{\nu}{2}\right)\right]\\
    &-\frac{(-1)^l}{\pi2^{l+1}} \left[\psi^{(l)}\!\left(\frac{1-\nu}{2}\right)-\psi^{(l)}\!\left(-\frac{\nu}{2}\right)\right]
    \end{split}
\end{equation}
where $\psi^{(n)}(z)$ is the polygamma function of order $n$. Substitution and simplification yield
\begin{equation}
	\begin{split}
	\Delta_n(\nu,\omega) =&\sum_{l=0}^n (-1)^l \binom{n}{l}(\operatorname{Log}\omega)^{n-l}\\
	&\hspace{-6mm}\times\left\{(-1)^{l+1}\,l!\,\frac{ \omega^{-\nu}}{\nu^{l+1}}+ \frac{\omega^{-\nu}}{2^{l+1}}\left[\psi^{(l)}\!\left(\frac{1+\nu}{2}\right)-(-1)^l \psi^{(l)}\!\left(\frac{1-\nu}{2}\right)\right]\right.\\
	& \left.- \frac{\omega^{-\nu}}{2^{l+1}}\left[\psi^{(l)}\!\left(\frac{\nu}{2}\right)-(-1)^l \psi^{(l)}\!\left(\frac{-\nu}{2}\right)\right]\right\} .
	\end{split}
\end{equation}
Again using the linearity of the regularized limit, we have
\begin{equation}
	\begin{split}
		\reglim{\nu}{0}\Delta_n(\nu,\omega) =&\sum_{l=0}^n (-1)^l \binom{n}{l}(\operatorname{Log}\omega)^{n-l}
		\left\{(-1)^{l+1} l!\, \reglim{\nu}{0} \frac{ \omega^{-\nu}}{\nu^{l+1}}\right.\\
		&+\frac{1}{2^{l+1}} \reglim{\nu}{0}\omega^{-\nu}\left[\psi^{(l)}\!\left(\frac{1+\nu}{2}\right)-(-1)^l \psi^{(l)}\!\left(\frac{1-\nu}{2}\right)\right]\\
		& \left.- \frac{1}{2^{l+1}}\reglim{\nu}{0}\omega^{-\nu}\left[\psi^{(l)}\!\left(\frac{\nu}{2}\right)-(-1)^l \psi^{(l)}\!\left(\frac{-\nu}{2}\right)\right]\right\}
	\end{split}
\end{equation}

The first regularized limit can be obtained using direct application of equation \eqref{coco}. The result is
\begin{equation}
	\reglim{\nu}{0} \frac{ \omega^{-\nu}}{\nu^{l+1}} = (-1)^{l+1} \frac{\operatorname{Log}^{l+1}\omega}{(n+1)!}.
\end{equation}
Owing to the analyticity of the polygamma function in the right half plane $\mathrm{Re}(\nu)>0$, the second regularized limit reduces to the Cauchy limit. The limit is proportional to
\begin{equation}
	\psi^{(l)}\!\left(\frac{1}{2}\right) = (-1)^{l+1} l! (2^{l+1}-1) \zeta(l+1),
\end{equation}
where $\zeta(z)$ is the Riemann zeta function. Then we obtain the regularized limit
\begin{equation}
	\reglim{\nu}{0}\omega^{-\nu}\left[\psi^{(l)}\!\left(\frac{1+\nu}{2}\right)-(-1)^l \psi^{(l)}\!\left(\frac{1-\nu}{2}\right)\right]=(1-(-1)^{l}) l! (2^{l+1}-1) \zeta(l+1)
\end{equation}
Observe that only odd $l$ contributes. Now the third limit involves a simple pole at $\nu=0$. The computation of the limit is facilitated by the asymptotic behavior of the polygamma function \cite{polyWolf},
\begin{equation}
	\psi^{(l)}(z)=\frac{(-1)^{l-1} l!}{z^{l+1}} + (-1)^{l-1} l! \zeta(l+1) (1+O(z)),\; z\rightarrow 0,\; l=1, 2, 3, \dots .
\end{equation}
We obtain the limit
\begin{equation}
	\reglim{\nu}{0}\omega^{-\nu}\left[\psi^{(l)}\!\left(\frac{\nu}{2}\right)-(-1)^l \psi^{(l)}\!\left(\frac{-\nu}{2}\right)\right]=\frac{2^{2l+l}}{n+1}(\operatorname{Log}\omega)^{l+1} + (-1)^l l! (1-(-1)^l) \zeta(l+1) .
\end{equation}

Substituting back the regularized limit and gathering all the terms together, we obtain
\begin{equation}
	\Delta_n(\omega)=-\frac{\ln^{n+1}}{(n+1)}+\sum_{l=0}^{n} \binom{n}{l} l! \frac{(2-2^{l+1})}{2^{l+1}}(1-(-1)^l)  \zeta(l+1) (\operatorname{Log}\omega)^{n-l} .
\end{equation}
Only the odd terms in the summation contribute and the contributing terms are proportional to
\begin{equation}
	\zeta(2j) = \frac{(-1)^{j-1} 2^{2j-1}\pi^{2j}}{(2j)!} B_{2n}
\end{equation}
in which $B_{2n}$'s are the Bernoulli numbers. Simplifying the sum to include only the non-vanishing odd terms leads to \eqref{gen2}. 

Now let us consider the infinite series of regularized limits. First we consider the case for $\rho_0<a<\infty$. By Lemma-\ref{mo} the infinite series becomes
\begin{equation}
    \sum_{j=0}^{\infty} (-1)^j \omega^j \reglim{\nu}{0} \bbint{0}{a} \frac{k(t)\ln^n t}{t^{j+\nu+1}}\,\mathrm{d}t = \sum_{j=0}^{\infty} (-1)^j \omega^j \bbint{0}{a} \frac{k(t)\ln^n t}{t^{j+1}}\,\mathrm{d}t
\end{equation}
We now show explicitly that this infinite series converges absolutely. We first obtain a bound for the finite-part integrals. From equation \eqref{xoxo3}, we have the representation for the relevant finite-part integrals,
\begin{equation}\label{nunu}
	\begin{split}
	\bbint{0}{a}\frac{k(t)\ln^n t}{t^{r+1}}\,\mathrm{d}t =& \int_{\epsilon}^a \frac{k(t)\ln^n t}{t^{r+1}}\,\mathrm{d}t + a_r \frac{\ln^{n+1}\epsilon}{(n+1)}\\&+\sum_{j=0}^n \frac{(-1)^j \ln^j\epsilon}{j!} \sum_{l=0,l\neq r}^{\infty} \frac{a_l \epsilon^{l-r}}{(l-r)^{n-j+1}} .
	\end{split}
\end{equation} 
Taking the modulus of both sides of \eqref{nunu}, we obtain the inequality,
 \begin{equation}\label{ineq2}
 	\begin{split}
 		\left|\bbint{0}{a}\frac{k(t)\ln^n t}{t^{r+1}}\,\mathrm{d}t\right| \leq & \frac{1}{\epsilon^r} \int_{\epsilon}^a \frac{\left|k(t)\ln^n t\right|}{t}\,\mathrm{d}t + \left|a_r\right| \frac{\left|\ln\epsilon\right|^{n+1}}{(n+1)}\\&+\frac{1}{\epsilon^r}\sum_{j=0}^n \frac{\left|\ln\epsilon\right|^j}{j!} \sum_{l=0,l\neq r}^{\infty} \frac{\left|a_l\right| \epsilon^{l}}{\left|l-r\right|^{n-j+1}} .
 	\end{split}
 \end{equation} 
where the first terms follows from inequality \eqref{ne} with $\nu=0$. Now we have the bound,
\begin{equation}
	\frac{1}{\left|l-r\right|}\leq 1 ,\;\; l\neq r .
\end{equation}
This translates inequality \eqref{ineq2} into
 \begin{equation}\label{paw}
	\begin{split}
		\left|\bbint{0}{a}\frac{k(t)\ln^n t}{t^{r+1}}\,\mathrm{d}t\right| \leq &  \left|a_r\right| \frac{\left|\ln\epsilon\right|^{n+1}}{(n+1)}+\frac{M_0(a,\epsilon)}{\epsilon^r} ,
	\end{split}
\end{equation} 
where
\begin{equation}
	\begin{split}
		M_0(a,\epsilon) = \int_{\epsilon}^a \frac{\left|k(t)\ln^n t\right|}{t}\,\mathrm{d}t+\sum_{j=0}^n \frac{\left|\ln\epsilon\right|^j}{j!} \sum_{l=0,l\neq r}^{\infty} \left|a_l\right| \epsilon^{l} .
	\end{split}
\end{equation}

We are now ready to prove the absolute convergence of the infinite series in \eqref{kwa}. Taking the modulus of both sides of the series, we have the inequality,
\begin{equation}
	\left|\sum_{r=0}^{\infty}(-1)^r \omega^r \bbint{0}{a}\frac{k(t) \ln^n t}{t^{r+1}}\right|\leq \sum_{r=0}^{\infty}\left|\omega\right|^r \left|\bbint{0}{a}\frac{k(t) \ln^n t}{t^{r+1}}\right| .
\end{equation}
Substituting the bound \eqref{paw} on the finite-part integrals, we arrive at
\begin{equation}
	\left|\sum_{r=0}^{\infty}(-1)^r \omega^r \bbint{0}{a}\frac{k(t) \ln^n t}{t^{r+1}}\right|\leq \frac{\left|\ln\epsilon\right|^{n+1}}{(n+1)} \sum_{r=0}^{\infty}\left|a_r \omega\right|^r +M_0(q,\epsilon)\sum_{r=0}^{\infty} \left|\frac{\omega}{\epsilon}\right|^r .
\end{equation}
The first term converges whenever $|\omega|<\rho_0$ and the second term converges whenever $\omega<\epsilon$. Again for every $\omega$ with $|\omega|<\rho_0$ there always exists a positive $\epsilon$ such that $|\omega|<\epsilon<\rho_0$. Under this condition both terms converge simultaneously for all $|\omega|<\rho_0$.  

For the $a<\rho_0$ case, we use the same method and arguments as in the $\nu\neq 0$ case to show that the infinite series converges under the necessary condition that $|\omega|<a$. 

For the $a=\infty$ case, we use the same arguments as in the $\nu\neq 0$ case to prove that equation \eqref{kwa} holds when $a$ is replaced with infinity. 

The asymptotic relation \eqref{asym2} follows from the fact that
	\begin{equation}\label{kwa2}
	\int_0^a \frac{k(t)\ln^n t}{(\omega+t)}\,\mathrm{d}t=\left(\bbint{0}{a}\frac{k(t) \ln^n t}{t}+O(\omega)\right) + \Delta_n(\omega)\left(k(0)+O(\omega))\right),\;\; \omega\rightarrow 0 .
\end{equation}
Since $\Delta_n(\omega)=O(\operatorname{Log}^{n+1}(\omega))$ as $\omega\rightarrow 0$, the second term dominates the first term and \eqref{asym2} follows.

\end{proof}

\subsubsection*{Example} For $n=1$ equation \eqref{kwa} reduces to the Stieltjes transform
\begin{equation}\label{sum1}
	\int_0^a \frac{k(t)\ln(t)}{(\omega+t)}\,\mathrm{d}t = \sum_{k=0}^{\infty} (-1)^k \omega^k \bbint{0}{a} \frac{k(t) \ln(t)}{t^{k+1}}\,\mathrm{d}t - k(-\omega) \left(\frac{1}{2}\operatorname{Log}^2\omega +\frac{\pi^2}{6} \right),
\end{equation}
true under the conditions of Theorem-\ref{hihi}. The asymptotic relation \eqref{asy2} follows from this expression for arbitrarily small $\omega$.

We can arrive at \eqref{sum1} by explicit finite-part integration using the contour integral representation of the finite-part integral given by \eqref{fpi2c}. Under the same relevant conditions as in Theorem-\ref{miyo}, the Stieltjes transform can be extracted from the contour integral
\begin{equation}
	\frac{1}{2\pi i} \int_{\mathrm{C}} \frac{k(z)}{\omega+z} \left[\frac{1}{2}\log^2z - i \pi \log z -\frac{\pi^2}{3}\right]\,\mathrm{d}z .
\end{equation}
Deforming the contour $C$ into the contour $C'$, the Stieltjes transform takes the representation
\begin{equation}\label{rep22}
	\begin{split}
		\int_0^a \frac{k(t) \ln t}{\omega + t}\,\mathrm{d}t =& \frac{1}{2\pi i} \int_{\mathrm{C}} \frac{k(z)}{\omega+z} \left[\frac{1}{2}\log^2z - i \pi \log z 
		-\frac{\pi^2}{3}\right]\mathrm{d}z\\
		&- k(-\omega) \left(\frac{1}{2}\ln^2\omega + \frac{\pi^2}{6}\right),
	\end{split}
\end{equation} 
assuming $\omega>0$. The second term is again the contribution from the pole of the kernel of transformation at $-\omega$. Expanding the kernel $(\omega+z)^{-1}$ about $\omega=0$ and distributing the integration, we arrive at \eqref{sum1} upon identifying the contour integrals as the finite-part integrals and replacing the multivalued functions with their principal values.

\section{A Combination of Logarithmic Singularities}\label{combination}
We now take up a specific implementation of the finite-part integration of the Stieltjes transform of the form 
\begin{equation}
	\int_0^a \frac{g(t)}{t^{\nu}(\omega +t)}\,\mathrm{d}t ,
\end{equation}
where \begin{equation}\label{sumlog}
	g(t)=\sum_{j=0}^N k_j(t) \ln^j t,
\end{equation}
in which the $k_j(t)$'s belong to $\mathcal{K}_a$. 
This leads to the consideration of the analytic continuation of the Mellin transform
\begin{equation}
	\mathcal{M}_a[g(t);1-\lambda]=\int_0^{a}\frac{g(t)}{t^{\lambda}}\,\mathrm{d}t ,\;\; c<\mathrm{Re}(\lambda)<1, 
\end{equation}
and its relationship to the finite-part integral
\begin{equation}
	\bbint{0}{a} \frac{g(t)}{t^{\lambda}}\,\mathrm{d}t, \;\;\;\mathrm{Re}(\lambda)\geq 1 .
\end{equation}
As in the previous cases, the bound $c$ is determined by the upper limit of integration $a$ and the properties of the $k_j(t)$'s.   

\subsection{Analytic continuation and finite-parts}
Because each $k_j(t) \ln^j t$ belongs to $\mathcal{K}_a$, each Mellin transform $\mathcal{M}_a[k_j(t)\ln^n t;1-\lambda]$ exists separately so that the Mellin transform of $g(t)$ is just the linear sum of the individual transforms. This translates to the same statement on the analytic continuation of the Mellin transform of $g(t)$. 
\begin{theorem} Under the stated conditions on the $k_j(t)$'s,
\begin{equation}\label{sumanal}
	\mathcal{M}_a^*[g(t);1-\lambda]=\sum_{j=0}^N \mathcal{M}_a^*[k_j(t)\ln^j t;1-\lambda] 
\end{equation}
where the analytic continuations in the right hand side are given by equation \eqref{generalanal}. 
\end{theorem}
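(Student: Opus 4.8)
The plan is to establish \eqref{sumanal} in two stages: linearity of the Mellin integral on a common strip of convergence, followed by uniqueness of analytic continuation. First I would fix notation. For each $j=0,1,\dots,N$ let $(d_j,1)$ denote the strip of analyticity of $\mathcal{M}_a[k_j(t);1-\lambda]$ furnished by the hypothesis $k_j\in\mathcal{K}_a$, and put $c=\max_{0\le j\le N}d_j$, noting $c<1$ since each $d_j<1$. By Theorem~\ref{co}, each $\mathcal{M}_a[k_j(t)\ln^j t;1-\lambda]$ exists and has the same strip of analyticity $(d_j,1)$; hence all $N+1$ of these transforms are simultaneously analytic, and equal to their defining integrals, on the nonempty strip $c<\operatorname{Re}(\lambda)<1$. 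On that strip, interchanging the finite sum with the integral (legitimate because each $k_j(t)\ln^j t/t^{\lambda}$ is absolutely integrable there) yields
\begin{equation}
\int_0^a\frac{g(t)}{t^{\lambda}}\,\mathrm{d}t=\sum_{j=0}^N\int_0^a\frac{k_j(t)\ln^j t}{t^{\lambda}}\,\mathrm{d}t,\qquad c<\operatorname{Re}(\lambda)<1 ,
\end{equation}
so $\mathcal{M}_a[g(t);1-\lambda]$ exists with strip of analyticity at least $(c,1)$ and coincides there with $\sum_{j=0}^N\mathcal{M}_a[k_j(t)\ln^j t;1-\lambda]$.

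Next I would pass to the half-plane $\operatorname{Re}(\lambda)\ge 1$. Theorem~\ref{ako} supplies, for each $j$, the analytic continuation $\mathcal{M}_a^*[k_j(t)\ln^j t;1-\lambda]=(-1)^j\frac{\mathrm{d}^j}{\mathrm{d}\lambda^j}\mathcal{M}_a^*[k_j(t);1-\lambda]$ of \eqref{generalanal}, holomorphic on $\{\operatorname{Re}(\lambda)>d_j\}$ except for at worst poles at the positive integers. Hence the finite sum
\begin{equation}
\Psi(\lambda):=\sum_{j=0}^N\mathcal{M}_a^*[k_j(t)\ln^j t;1-\lambda]
\end{equation}
is holomorphic on $\{\operatorname{Re}(\lambda)>c\}$ away from the discrete set $\{1,2,3,\dots\}$, hence meromorphic there, and it agrees with the integral $\mathcal{M}_a[g(t);1-\lambda]$ on the strip $c<\operatorname{Re}(\lambda)<1$. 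Since $\{\operatorname{Re}(\lambda)>c\}\setminus\{1,2,3,\dots\}$ is connected and the strip has an accumulation point in it, the identity theorem forces $\Psi$ to be the unique analytic continuation $\mathcal{M}_a^*[g(t);1-\lambda]$, which is exactly \eqref{sumanal}; as in Theorem~\ref{one}, one assigns the Cauchy limit at any $\lambda=m$ where the singularity turns out to be removable.

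I do not anticipate a genuine obstacle here; the argument is essentially bookkeeping. The only care points are (i) recording that the common strip $(c,1)$ is nonempty, which is immediate since every $d_j<1$, and (ii) checking that the isolated singularities of the $\mathcal{M}_a^*[k_j(t)\ln^j t;1-\lambda]$ all sit at the positive integers and so form a discrete set on which the identity theorem applies verbatim. Optionally I would append the remark that the pole of $\mathcal{M}_a^*[g(t);1-\lambda]$ at $\lambda=m$ has order $1+\max\{\,j:k_j^{(m-1)}(0)\neq 0\,\}$, because the leading Laurent coefficients of the individual summands, having pairwise distinct orders in $(\lambda-m)^{-1}$, cannot cancel; thus no spurious cancellation disturbs the singularity structure predicted termwise by Theorem~\ref{ako}.
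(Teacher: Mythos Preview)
Your proposal is correct and follows the same approach as the paper: linearity of the Mellin integral on the common strip of analyticity, followed by uniqueness of analytic continuation into the half-plane $\operatorname{Re}(\lambda)\ge 1$. The paper in fact states this theorem without a formal proof, offering only the one-sentence justification that ``the Mellin transform of $g(t)$ is just the linear sum of the individual transforms'' and that ``this translates to the same statement on the analytic continuation''; your write-up simply makes that sketch rigorous, and your closing remark on pole orders anticipates exactly the content of the paper's next theorem.
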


The analytic structure of the analytic continuation now depends on the specific combination and on the analytic properties of the $k_j(z)$'s. Let us consider the simplest of cases,
\begin{equation}\label{linear}
	h(t)=k_0(t)+k_1(t) \ln t .
\end{equation}
We have the analytic continuation
\begin{equation}
	\mathcal{M}_a^*[h(t);1-\lambda]=\mathcal{M}_a^*[k_0(t);1-\lambda]+\mathcal{M}_a^*[k_1(t)\ln t;1-\lambda].
\end{equation}
We know that $\mathcal{M}_a[k_1(t)\ln t;1-\lambda]$ has a double pole or a removable singularity at positive integers; also we know that $\mathcal{M}_a[k_0(t);1-\lambda]$ has a simple pole or a removable singularity at the same points. If $k_1^{(m-1)}(0)\neq 0$, then $\mathcal{M}_a[h(t);1-\lambda]$ has a double pole at $\lambda=m$. If $k_1^{(m-1)}(0)=0$, the singularity of  $\mathcal{M}_a[h(t);1-\lambda]$ is now determined by the singularity of $\mathcal{M}_a^*[k_0(t);1-\lambda]$. If $k_0^{(m-1)}(0)\neq 0$, then  $\mathcal{M}_a[h(t);1-\lambda]$ has a simple pole at $\lambda=m$; on the other hand,  if  $k_0^{(m-1)}(0)=0$,  $\mathcal{M}_a[h(t);1-\lambda]$ has a removable singularity at $\lambda=0$. Thus $\mathcal{M}_a[h(t);1-\lambda]$ may have a double pole, a simple pole or a removable singularity at positive integers $\lambda=m$ dictated by the zeros of $k_0(t)$ and $k_1(t)$ and their derivatives in the positive real line. 

The same analysis applies for an arbitrary combination of logarithmic singularities at the origin in the form of the function $g(t)$ given by \eqref{sumlog}. We conclude.
\begin{theorem}
	If $k_N^{(m-1)}(0)\neq 0$, then $\mathcal{M}_a^*[h(t);1-\lambda]$ has a pole of order $(N+1)$ at $\lambda=m$. If $k_J^{(m-1)}(0)\neq 0$ and $k_j^{(m-1)}=0$ for all $j>J$, then $\mathcal{M}_a^*[h(t);1-\lambda]$ has a pole of order $(J+1)$ at $\lambda=m$. If $k_j^{(m-1)}(0)=0$ for all $j=0,\dots,N$, then $\lambda=m$ is a regular point or a removable singularity of $\mathcal{M}_a^*[h(t);1-\lambda]$.
\end{theorem}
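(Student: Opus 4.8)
The plan is to reduce the assertion to the single-logarithm case of Theorem-\ref{ako} via the additive decomposition \eqref{sumanal}, followed by a ``maximum of pole orders, no cancellation'' argument. First I would invoke \eqref{sumanal},
\begin{equation*}
	\mathcal{M}_a^*[g(t);1-\lambda]=\sum_{j=0}^N \mathcal{M}_a^*[k_j(t)\ln^j t;1-\lambda],
\end{equation*}
so the local structure of the left-hand side at $\lambda=m$ is governed by that of the finitely many summands. By Theorem-\ref{ako}, the $j$-th summand has at $\lambda=m$ a pole of order exactly $j+1$ when $k_j^{(m-1)}(0)\neq 0$, and a removable singularity (hence no principal part) when $k_j^{(m-1)}(0)=0$.

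The substantive step is to control the top-order Laurent coefficient of each summand. From the proof of Theorem-\ref{ako} — specifically equation \eqref{xoxo} with $k=k_j$ and log-power $j$ — the entire principal part of $\mathcal{M}_a^*[k_j(t)\ln^j t;1-\lambda]$ at $\lambda=m$ comes from the $l=m-1$ term of the series there, namely
\begin{equation*}
	(-1)^j j!\, a_{m-1}^{(j)}\, \epsilon^{m-\lambda}\sum_{i=0}^j \frac{(-1)^i}{i!}\,\frac{\ln^i\epsilon}{(m-\lambda)^{j-i+1}},\qquad a_{m-1}^{(j)}=\frac{k_j^{(m-1)}(0)}{(m-1)!}.
\end{equation*}
Since $\epsilon^{m-\lambda}$ and the powers $\ln^i\epsilon$ are entire in $\lambda$, expanding $\epsilon^{m-\lambda}=\epsilon^{-(\lambda-m)}$ in powers of $(\lambda-m)$ shows that the coefficient of $(\lambda-m)^{-(j+1)}$ receives a contribution only from $i=0$ together with the constant term of $\epsilon^{-(\lambda-m)}$; it therefore equals $-j!\,a_{m-1}^{(j)}$, independent of $\epsilon$ as it must be, and nonzero precisely when $k_j^{(m-1)}(0)\neq 0$. (This also re-derives the pole-order statement of Theorem-\ref{ako}.)

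With this in hand the theorem follows. If $k_j^{(m-1)}(0)=0$ for every $j$, all summands are analytic at $\lambda=m$, hence so is the finite sum, giving a regular point or removable singularity; this is the third case. Otherwise let $J$ be the largest index with $k_J^{(m-1)}(0)\neq 0$. Then each summand with $j>J$ is analytic at $\lambda=m$; each with $j<J$ has a pole there of order at most $j+1\le J<J+1$; and the $j=J$ summand has a pole of order exactly $J+1$ with leading Laurent coefficient $-J!\,a_{m-1}^{(J)}\neq 0$. Consequently the coefficient of $(\lambda-m)^{-(J+1)}$ in the sum equals $-J!\,a_{m-1}^{(J)}\neq 0$ — no cancellation is possible because only one summand reaches that order — so $\mathcal{M}_a^*[g(t);1-\lambda]$ has a pole of order exactly $J+1$ at $\lambda=m$. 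Specializing $J=N$ gives the first assertion and the intermediate values of $J$ the second.

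The only real obstacle is the bookkeeping in the second paragraph: one must verify that for each single-logarithm term the top-order pole coefficient is a clean nonzero multiple of $k_j^{(m-1)}(0)$, and then identify which summand supplies the top-order pole of the sum. Everything else — linearity of the analytic continuation and the elementary fact that a finite sum of functions meromorphic at a point has a pole there whose order is the maximum of the individual orders whenever that maximum is attained by a single term — is routine, so I expect the proof to be short once the coefficient computation is recorded.
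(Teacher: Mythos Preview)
Your proposal is correct and follows essentially the same approach as the paper. The paper argues the $N=1$ case by noting that the summands in \eqref{sumanal} have poles of distinct orders (by Theorem-\ref{ako}) so the highest one survives, and then simply asserts that ``the same analysis applies'' for general $N$; your version is the same argument made explicit, with the added care of pinning down the leading Laurent coefficient $-j!\,a_{m-1}^{(j)}$ to confirm non-cancellation --- a step the paper leaves implicit since distinct pole orders already preclude cancellation at the top order.
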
 

Now it is immediate from the definition of the finite-part that the process of extracting the finite-part is linear, owing to the linearity of integration over a finite sum of integrable functions. Thus we have
\begin{equation}
	\bbint{0}{a} \frac{h(t)}{t^{\lambda}}\,\mathrm{d}t =\sum_{j=0}^N \bbint{0}{a} \frac{k_j(t)\ln^j t}{t^{\lambda}}\,\mathrm{d}t .
\end{equation}
This implies the following result.
\begin{theorem} \label{theoremcom}
	At $\lambda\neq 1, 2, 3,\dots$,
	\begin{equation}
	\bbint{0}{a}\frac{g(t)}{t^{\lambda}}\,\mathrm{d}t = \mathcal{M}_a^*[h(t);1-\lambda],
	\end{equation} 
and at $\lambda=m=1, 2, 3,\dots$,
	\begin{equation}
	\bbint{0}{a}\frac{g(t)}{t^{m}}\,\mathrm{d}t =\reglim{\lambda}{m} \mathcal{M}_a^*[h(t);1-\lambda].
\end{equation} 
\end{theorem}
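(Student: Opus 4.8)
The plan is to reduce the statement to the single-term identities already proved in Part~II---namely Theorem~\ref{bay} (equality of the finite-part integral and the analytic continuation at non-integer $\lambda$) and Theorem~\ref{bek} (equality of the finite-part integral and the regularized limit at $\lambda=m$)---and then to reassemble the pieces using linearity. The point is that $g(t)=\sum_{j=0}^N k_j(t)\ln^j t$ is a \emph{finite} sum in which each summand $k_j(t)\ln^j t$ is exactly of the type covered by those theorems, since every $k_j(t)$ lies in $\mathcal{K}_a$; hence no fresh convergence or estimation work is needed, only an assembly of known facts.

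First I would invoke the linearity of the finite-part construction, observed immediately before the statement, to write
\[
\bbint{0}{a}\frac{g(t)}{t^{\lambda}}\,\mathrm{d}t=\sum_{j=0}^N \bbint{0}{a}\frac{k_j(t)\ln^j t}{t^{\lambda}}\,\mathrm{d}t ,
\]
valid for every $\lambda$ with $\mathrm{Re}(\lambda)\geq 1$, integer or not, because the finite-part always exists. For $\lambda\neq 1,2,3,\dots$ I would replace each term on the right by $\mathcal{M}_a^*[k_j(t)\ln^j t;1-\lambda]$ using Theorem~\ref{bay}, and then recognize the resulting finite sum as $\mathcal{M}_a^*[g(t);1-\lambda]$ via the additivity of the analytic continuation recorded in \eqref{sumanal}. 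This yields the first displayed equality of the theorem.

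For the integer case $\lambda=m$ I would proceed in the same way but with Theorem~\ref{bek} in place of Theorem~\ref{bay}, so that each term equals $\reglim{\lambda}{m}\mathcal{M}_a^*[k_j(t)\ln^j t;1-\lambda]$. To collapse the sum of regularized limits into a single regularized limit of the sum, I would apply the linearity of the regularized limit (Theorem~\ref{twoterm}, used $N$ times). Its hypothesis---that $\lambda_0=m$ be an isolated singularity of each summand---is guaranteed by the structure theorem Theorem~\ref{ako}, which says each $\mathcal{M}_a^*[k_j(t)\ln^j t;1-\lambda]$ is either analytic at $\lambda=m$ or has a pole there; in either case $\lambda=m$ is at worst an isolated singularity common to all summands, so a common deleted neighbourhood exists. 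Applying \eqref{sumanal} once more then identifies $\sum_j \reglim{\lambda}{m}\mathcal{M}_a^*[k_j(t)\ln^j t;1-\lambda]$ with $\reglim{\lambda}{m}\mathcal{M}_a^*[g(t);1-\lambda]$, giving the second equality.

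The only step calling for any care---and the closest thing to an obstacle---is the interchange of the regularized limit with the finite summation at $\lambda=m$; this is precisely where Theorem~\ref{twoterm} is invoked, and its applicability rests on Theorem~\ref{ako} ensuring that $\lambda=m$ is an isolated singularity (or a regular point) of every $\mathcal{M}_a^*[k_j(t)\ln^j t;1-\lambda]$. Everything else is a direct transcription of the single-term results of Part~II, so the proof is short and requires no further estimates.
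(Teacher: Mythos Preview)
Your proposal is correct and follows precisely the approach the paper intends: the paper itself gives no explicit proof, simply observing that linearity of the finite-part together with \eqref{sumanal} ``implies the following result,'' and your argument is exactly the elaboration of that implication via Theorems~\ref{bay}, \ref{bek}, and~\ref{twoterm}. Your added care in verifying the hypothesis of Theorem~\ref{twoterm} (that $\lambda=m$ is an isolated singularity of each summand, by appeal to Theorem~\ref{ako}) is a genuine improvement over the paper's terse treatment.
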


The importance of this result lies on the possibility that the analytic continuations in the right hand side of equation \eqref{sumanal} may not be available for each term but the analytic continuation of the right hand side may be available. Then Theorem-\ref{theoremcom} allows us to evaluate the finite-part from the direct analytic continuation of the Mellin transform of $g(t)$ taken as a whole.

\subsection{Linear Logarithmic Case}
We now evaluate the Stieltjes transform for the specific case of $g(t)$ given by 
\begin{equation}\label{linearcase}
	g(t)=k_0(t)+k_1(t) \ln t,
\end{equation}
where $k_0(t)$ and $k_1(t)$ are both in $\mathcal{K}_a$. Examples of special functions falling under this family of functions are the Bessel functions of the second kind, $K_{\nu}(z)$ and $Y_{\nu}(z)$. Substituting we have
 \begin{equation}\label{bobo}
 	\int_0^{a}\frac{g(t)}{t^{\nu}(\omega+t)}\,\mathrm{d}t = \int_0^a \frac{k_0(t)}{t^{\nu} (\omega+t)}\,\mathrm{d}t + \int_0^a \frac{k_1(t) \ln t}{t^{\nu} (\omega+t)}\,\mathrm{d}t
 \end{equation}
The Stieltjes transform for the non-logarithmic case is already known and are given by equation \eqref{orig1} and \eqref{orig2}, and for the linear case are given by equations \eqref{sum2} and \eqref{sum1}. Substituting them back into equation \eqref{bobo}, we arrive at the following results.
\begin{proposition}\label{weh} 
	Given the conditions on $g(t)$ of \eqref{linearcase}. Let $\rho_0^{(0)}$ and $\rho_0^{(1)}$ be the respective distances of the singularities of $k_0(z)$ and $k_1(z)$ nearest to the origin. Then  
\begin{equation}\label{koko}
	\begin{split}
	\int_0^a \frac{g(t)}{t^{\nu} (\omega+t)}\,\mathrm{d}t =& \sum_{j=0}^{\infty} (-1)^j \omega^j \bbint{0}{a} \frac{g(t)}{t^{j+\nu+1}}\,\mathrm{d}t\\
	& \hspace{-20mm}+ \frac{\pi}{\omega^{\nu} \sin(\pi\nu)} \left[k_0(-\omega)+k_1(-\omega)\operatorname{Log}\omega + \pi k_1(-\omega)\cot(\pi\nu)\right],\; \nu\neq 0 .
	\end{split}
\end{equation}
for all $|\omega|<\mathrm{min}(\rho_0^{(0)},\rho_0^{(1)},a)$ with $|\mathrm{Arg}\,\omega|<\pi$. If it happens that $k_0(t)$ and $k_1(t)$ are both even or odd, then \eqref{koko} simplifies to
\begin{equation}\label{bwa}
	\begin{split}
		\int_0^a \frac{g(t)}{t^{\nu} (\omega+t)}\,\mathrm{d}t =& \sum_{j=0}^{\infty} (-1)^j \omega^j \bbint{0}{a} \frac{g(t)}{t^{j+\nu+1}}\,\mathrm{d}t\\
		& \hspace{-20mm}\pm \frac{\pi}{\omega^{\nu} \sin(\pi\nu)} \left[g(\omega) + \pi k_1(\omega)\cot(\pi\nu)\right],\; \nu\neq 0 ,
	\end{split}
\end{equation}
where the upper (lower) sign holds when the functions are both even (odd).

\end{proposition}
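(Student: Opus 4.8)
The plan is to reduce the claim to the two evaluations already in hand: equation \eqref{orig1} of Theorem-\ref{miyo} for the non-logarithmic piece $k_0(t)$, and equation \eqref{sum2} (the $n=1$ specialization of Theorem-\ref{hi}) for the linear-logarithmic piece $k_1(t)\ln t$. First I would invoke linearity of the Riemann integral to write the Stieltjes transform of $g(t)=k_0(t)+k_1(t)\ln t$ as the sum of the two component Stieltjes transforms, exactly as in equation \eqref{bobo}; this split is legitimate because both $t^{-\nu}(\omega+t)^{-1}k_0(t)$ and $t^{-\nu}(\omega+t)^{-1}k_1(t)\ln t$ are integrable on $(0,a)$ under the standing hypotheses (each $k_j(t)$ lies in $\mathcal{K}_a$, and the reasoning behind Theorem-\ref{co} shows the logarithmic factor does not spoil integrability at either endpoint).

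Next I would substitute \eqref{orig1} with $k=k_0$ and \eqref{sum2} with $k=k_1$ into the right-hand side of \eqref{bobo}. Each of the two resulting series of finite-part integrals converges in its own disk: the $k_0$-series for $|\omega|<\min(\rho_0^{(0)},a)$ and the $k_1$-series for $|\omega|<\min(\rho_0^{(1)},a)$, with the argument restriction $|\operatorname{Arg}\omega|<\pi$ in both cases. Hence both series, and so their termwise sum, converge on the common disk $|\omega|<\min(\rho_0^{(0)},\rho_0^{(1)},a)$. On that disk I would combine them into a single series using linearity of the finite-part operator---the termwise identity $\bbint{0}{a}t^{-j-\nu-1}g(t)\,\mathrm{d}t=\bbint{0}{a}t^{-j-\nu-1}k_0(t)\,\mathrm{d}t+\bbint{0}{a}t^{-j-\nu-1}k_1(t)\ln t\,\mathrm{d}t$, valid because extracting a finite-part distributes over a finite sum of Mellin-type divergent integrals---and I would add the two singular terms $\pi k_0(-\omega)/(\omega^{\nu}\sin\pi\nu)$ and $\pi k_1(-\omega)(\pi\cot\pi\nu+\operatorname{Log}\omega)/(\omega^{\nu}\sin\pi\nu)$ to obtain the bracketed expression in \eqref{koko}. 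The case $a=\infty$ then follows from the corresponding clauses of Theorems-\ref{miyo} and-\ref{hi} applied to each summand separately.

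For the even/odd simplification I would use that membership in $\mathcal{K}_a$ forces $k_0$ and $k_1$ to possess unique complex extensions $k_0(z),k_1(z)$. If both are even then $k_0(-\omega)=k_0(\omega)$ and $k_1(-\omega)=k_1(\omega)$, so $k_0(-\omega)+k_1(-\omega)\operatorname{Log}\omega=k_0(\omega)+k_1(\omega)\operatorname{Log}\omega$, which is precisely the complex extension of $g$ evaluated at $\omega$ (since $g(t)=k_0(t)+k_1(t)\ln t$ and $\operatorname{Log}$ continues $\ln$ off the positive axis), while $\pi k_1(-\omega)\cot\pi\nu=\pi k_1(\omega)\cot\pi\nu$; collecting these terms yields the upper-sign form of \eqref{bwa}. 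If instead both are odd, every occurrence of $k_0(-\omega)$ and $k_1(-\omega)$ changes sign, so an overall factor $-1$ comes out of the bracket, producing the lower-sign form.

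I do not anticipate a genuine obstacle: the statement is essentially an exercise in assembling the two main theorems of Part-III by linearity. The only points demanding care are (i) confirming that the radius of convergence of the combined series is no smaller than $\min(\rho_0^{(0)},\rho_0^{(1)},a)$, which is immediate since each constituent series already converges on that disk, and (ii) remaining consistent about branches: after the analytic continuation in $\omega$, the natural logarithm $\ln\omega$ appearing in \eqref{orig1} and \eqref{sum2} is replaced by its principal value $\operatorname{Log}\omega$, so that $g(\omega)$ in \eqref{bwa} is unambiguously the complex extension $k_0(\omega)+k_1(\omega)\operatorname{Log}\omega$.
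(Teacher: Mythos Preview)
Your proposal is correct and follows essentially the same approach as the paper: split the Stieltjes transform by linearity as in equation \eqref{bobo}, substitute the known evaluations \eqref{orig1} and \eqref{sum2} for the two pieces, and combine using linearity of the finite-part operator. Your treatment is in fact more explicit than the paper's about the joint radius of convergence and the branch conventions, but the argument is the same.
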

\begin{proposition} Under the same relevant conditions as in Proposition-\ref{weh},
\begin{equation}\label{hew}
    \int_0^a \frac{g(t)}{\omega+t}\,\mathrm{d}t =  \sum_{j=0}^{\infty} (-1)^j \omega^j \bbint{0}{a}\frac{g(t)}{t^{j+1}}\,\mathrm{d}t - k_0(-\omega) \operatorname{Log}\omega - k_1(-\omega) \left(\frac{\operatorname{Log}^2\omega}{2}+\frac{\pi^2}{6}\right) .
\end{equation}
If it happens that $k_0(t)$ and $k_1(t)$ are both even or odd, then \eqref{hew} simplifies to
\begin{equation}\label{linear2}
    \int_0^a \frac{g(t)}{\omega+t}\,\mathrm{d}t =  \sum_{j=0}^{\infty} (-1)^j \omega^j \bbint{0}{a}\frac{g(t)}{t^{j+1}}\,\mathrm{d}t \mp g(\omega) \operatorname{Log}\omega \pm k_1(\omega) \left(\frac{\operatorname{Log}^2\omega}{2}+\frac{\pi^2}{6}\right) ,
\end{equation}
where the upper (lower) signs hold when the functions are both even (odd).  
\end{proposition}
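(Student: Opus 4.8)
The plan is to obtain both \eqref{hew} and \eqref{linear2} by pure assembly from the results already proved for the non-logarithmic case and for the linear logarithmic case, exploiting the linearity of both ordinary integration and of the finite-part operation. Writing $g(t)=k_0(t)+k_1(t)\ln t$ with $k_0,k_1\in\mathcal{K}_a$, linearity of the Stieltjes integral gives
\begin{equation*}
	\int_0^a\frac{g(t)}{\omega+t}\,\mathrm{d}t=\int_0^a\frac{k_0(t)}{\omega+t}\,\mathrm{d}t+\int_0^a\frac{k_1(t)\ln t}{\omega+t}\,\mathrm{d}t .
\end{equation*}
To the first integral I would apply equation \eqref{orig2} (the $\nu=0$ non-logarithmic evaluation) and to the second equation \eqref{sum1} (the $n=1$ case of Theorem-\ref{hihi}); each holds on its own annulus of convergence with $|\operatorname{Arg}\omega|<\pi$.

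Next I would add the two expansions term by term. The regular parts combine using the linearity of the finite-part integral recorded just before Theorem-\ref{theoremcom}, namely $\bbint{0}{a}t^{-j-1}k_0(t)\,\mathrm{d}t+\bbint{0}{a}t^{-j-1}k_1(t)\ln t\,\mathrm{d}t=\bbint{0}{a}t^{-j-1}g(t)\,\mathrm{d}t$, into the single series $\sum_{j\ge 0}(-1)^j\omega^j\bbint{0}{a}t^{-j-1}g(t)\,\mathrm{d}t$, while the two ``missing term'' contributions add to $-k_0(-\omega)\operatorname{Log}\omega-k_1(-\omega)\bigl(\tfrac12\operatorname{Log}^2\omega+\tfrac{\pi^2}{6}\bigr)$, which is \eqref{hew}. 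The domain of validity is the intersection of the two individual domains, i.e. $|\omega|<\min(\rho_0^{(0)},\rho_0^{(1)},a)$ with $|\operatorname{Arg}\omega|<\pi$: inside that region both series converge absolutely by the bounds established in the proofs of the $\nu=0$ theorems, and both residue terms are analytic there. The passage to $a=\infty$ is inherited verbatim from the corresponding passages in those two theorems.

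Finally, for \eqref{linear2} I would specialise to $k_0,k_1$ both even or both odd, so that $k_j(-\omega)=\pm k_j(\omega)$ with a common sign; substituting this into the missing terms of \eqref{hew} and regrouping — writing $k_0(\omega)\operatorname{Log}\omega+k_1(\omega)\operatorname{Log}^2\omega=g(\omega)\operatorname{Log}\omega$ after interpreting $g$ through its analytic extension $k_0(z)+k_1(z)\operatorname{Log}z$ — collapses the two terms into $\mp g(\omega)\operatorname{Log}\omega\pm k_1(\omega)\bigl(\tfrac12\operatorname{Log}^2\omega+\tfrac{\pi^2}{6}\bigr)$, with the upper (lower) sign for the even (odd) case. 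I expect no new analytic difficulty here: the whole proof is an application of Theorems-\ref{miyo} through-\ref{hihi}. The only genuinely delicate points are bookkeeping ones — pinning down the common annulus of convergence as the intersection of the three radii $\rho_0^{(0)},\rho_0^{(1)},a$, and keeping track of the branch of the logarithm when the argument $-\omega$ of the $k_j$'s is replaced by $\omega$ inside the analytically continued $g$.
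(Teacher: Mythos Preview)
Your proposal is correct and follows exactly the paper's own approach: the paper simply states that substituting the already-proved evaluations \eqref{orig2} and \eqref{sum1} into the decomposition \eqref{bobo} yields the two propositions, with no further argument given. Your added remarks on the common annulus of convergence and on the parity regrouping are the natural bookkeeping the paper leaves implicit.
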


\subsection{Example}
As an example, we evaluate the following Stieltjes transform using finite-part integration,
\begin{equation}
    \int_0^{\infty} \frac{Y_0(t)}{t^{\nu}(\omega+t)}\,\mathrm{d}t,
\end{equation}
where $Y_0(t)$ is a Bessel function of the second kind. This Bessel function has the representation
\begin{equation}
    Y_0(t)=\frac{2}{\pi} J_0(t) \ln t-\frac{2}{\pi} J_0(t) \ln 2  -\frac{2}{\pi}\sum_{k=0}^{\infty} \frac{(-1)^k}{(k!)^2} \psi(k+1) \left(\frac{t}{2}\right)^{2k},
\end{equation}
where $J_0(t)$ is a Bessel function of the first kind. This fall under \eqref{linearcase} with
\begin{equation}
    k_1(t)=\frac{2}{\pi} J_0(t)
\end{equation}
\begin{equation}
    k_0(t)=-\frac{2}{\pi} J_0(t) \ln 2 -\frac{2}{\pi}\sum_{k=0}^{\infty} \frac{(-1)^k}{(k!)^2} \psi(k+1) \left(\frac{t}{2}\right)^{2k} .
\end{equation}
Both $k_0(z)$ and $k_1(z)$ are entire and even in their variables. 

The problem reduces to evaluating the finite-part integrals. We use tabulated Mellin transform and invoke analytic continuation. We have the known result \cite[pg. 172, \#3.11.1.1]{yu},
\begin{equation}\label{mellinyu}
    \int_0^{\infty} t^{-\lambda} Y_0(t)\,\mathrm{d}t = -\frac{1}{\pi 2^{\lambda}} \sin\!\left(\frac{\lambda\pi}{2}\right) \left[\Gamma\!\left(\frac{1-\lambda}{2}\right)\right]^2, \;\;\; -\frac{1}{2}<\mathrm{Re}(\lambda)<1 .
\end{equation}
The right hand side is analytic everywhere except at the points $\lambda=1, 3, 5, \dots$, and gives the desired analytic continuation of the Mellin transform,
\begin{equation}
\mathcal{M}^*[Y_0(t);1-\lambda]    =-\frac{\pi\sin\!\left(\frac{\lambda\pi}{2}\right)}{ 2^{\lambda} \cos^2\left(\frac{\pi\lambda}{2}\right) \left[\Gamma\!\left(\frac{1+\lambda}{2}\right)\right]^2}  
\end{equation}
where the right hand side has been arrived at by applying the reflection formula to the gamma function,
\begin{equation}\label{reflection}
	\Gamma(-z)=\frac{\pi \csc(\pi z)}{\Gamma(z+1)},
\end{equation}
 in equation \eqref{mellinyu}. The analytic continuation has zeros at $\lambda =2, 4, 6, \dots$ so that the finite-part integral at these points is zero. The analytic continuation has double pole at odd positive integers. 

\subsubsection{$\nu\neq 0$}
Equation \eqref{bwa} evaluates the Stieltjes integral into
\begin{equation}\label{xo}
	\begin{split}
	\int_0^{\infty} \frac{Y_0(t)}{t^{\nu}(\omega+t)}\,\mathrm{d}t =& \sum_{j=0}^{\infty} (-1)^j \omega^j \bbint{0}{a} \frac{Y_0(t)}{t^{j+\nu+1}}\,\mathrm{d}t \\
	&+ \frac{\pi}{\omega^{\nu} \sin(\pi\nu)} \left[Y_o(\omega) + 2 J_0(\omega) \cot(\pi\nu)\right].
	\end{split}
\end{equation}
The finite-part integrals correspond to the points $\lambda=j+\nu+1$ which are points of analyticity of the analytic continuation because $(j+\nu+1)$ is non-integer given that $j$ is an integer and $\nu$ is a non-integer. Then the finite-parts coincide with the values of the analytic continuation at those points, 
\begin{equation}
	\begin{split}
	\bbint{0}{\infty} \frac{Y_0(t)}{t^{j+\nu+1}}\,\mathrm{d}t = - \frac{\pi \sin\!\left(\frac{\pi}{2}(j+1+\nu)\right)}{2^{j+1+\nu} \cos^2\!\left(\frac{\pi}{2}(j+1+\nu)\right)\left[\Gamma\!\left(\frac{1}{2}(j+1+\nu)\right)\right]^2}.
	\end{split}
\end{equation}
The finite-part integrals split in two groups depending on whether $j$ is even or odd. For the even terms, $j=2 k$, $k=0, 1, 2,\dots$, we have the values
\begin{equation}\label{f1}
	\begin{split}
	\bbint{0}{\infty} \frac{Y_0(t)}{t^{2k+\nu+1}}\,\mathrm{d}t = \frac{\pi \cos(\pi\nu/2) \csc^2(\pi\nu/2)}{2^{\nu+1}\left[\Gamma(\nu/2)\right]^2} \frac{(-1)^{k+1}}{2^{2k} \left(\nu/2\right)_k \left(\nu/2\right)_k}
\end{split}
\end{equation}
For the odd terms, $j=2k+1$, $k=0,1,2,\dots$, we have likewise the values
\begin{equation}\label{f2}
	\begin{split}
		\bbint{0}{\infty} \frac{Y_0(t)}{t^{2k+\nu+2}}\,\mathrm{d}t = \frac{\pi \sin(\pi\nu/2) \sec^2(\pi\nu/2)}{2^{\nu+2}\left[\Gamma(3/2+\nu/2)\right]^2} \frac{(-1)^{k}}{2^{2k} \left(3/2+\nu/2\right)_k \left(3/2+\nu/2\right)_k} .
	\end{split}
\end{equation}

We substitute the finite-part integrals \eqref{f1} and \eqref{f2} back into  equation \eqref{xo}, sum separately the even and odd terms, and simplify using the reflection formula \eqref{reflection}. We obtain the result
\begin{equation}\label{neni}
	\begin{split}
	\int_0^{\infty}\frac{Y_0(t)}{t^{\nu}(\omega+t)}\,\mathrm{d}t =&\frac{\pi}{\omega^{\nu} \sin(\pi\nu)} \left[Y_0(\omega) + 2 J_0(\omega) \cot(\pi\nu)\right] \\ &\hspace{-24mm}-\frac{\cos(\pi\nu/2) \Gamma^2(-\nu/2)}{\pi 2^{\nu+1}} \hypergeom{1}{2}{1}{1+\nu/2 , 1+\nu/2}{-\frac{\omega^2}{4}}\\
	&\hspace{-24mm}-\frac{\sin(\pi\nu/2) \Gamma^2(-1/2-\nu/2)}{\pi 2^{\nu+2}} \hypergeom{1}{2}{1}{3/2+\nu/2 , 3/2+\nu/2}{-\frac{\omega^2}{4}},
	\end{split}
\end{equation}
which is valid for $0<\mathrm{Re}(\nu)<1$ and for all $|\omega|<\infty$, $|\mathrm{Arg}(\omega)|<\pi$. The second term arises from the even term contributions; and the third term, from the odd term contributions. The result is valid for all non-negative $\omega$ because $k_0(z)$ and $k_1(z)$ are both entire, and the range of integration is the entire positive real line.

\subsubsection{$\nu=0$}
Equation \eqref{linear2} evaluates the Stieltjes transform into
\begin{equation}\label{pipi}
	\int_0^{\infty}\frac{Y_0(t)}{\omega+t}\,\mathrm{d}t =\sum_{j=0}^{\infty} (-1)^j \omega^j \bbint{0}{\infty} \frac{Y_0(t)}{t^{j+1}}\,\mathrm{d}t - Y_0(\omega) \operatorname{Log}\omega + \frac{J_0(\omega)}{\pi}\left(\frac{\operatorname{Log}^2\omega}{2}-\frac{\pi^2}{6}\right) .
\end{equation}
We have seen earlier that the analytic continuation of the Mellin transform vanish at positive even $\lambda$'s so that the terms $j=1,3,5,\dots$ do not contribute, and only the terms $j=0, 2, 4,\dots$ contribute. The contributing finite-parts are given by
\begin{equation}
	\bbint{0}{\infty}\frac{Y_0(t)}{t^{2l+1}}\,\mathrm{d}t = \reglim{\lambda}{2l+1} \mathcal{M}^*[Y_0(t);1-\lambda], \;l=0,1,2,\dots .
\end{equation}

To evaluate the non-vanishing finite-part integrals, we rationalize the analytic continuation in the form
\begin{equation}
	\mathcal{M}^*[Y_0(t);1-\lambda]=\frac{f(\lambda)}{g(\lambda)},
\end{equation} where 
\begin{equation}
f(\lambda)=-\frac{\pi\sin\!\left(\frac{\lambda\pi}{2}\right)}{ 2^{\lambda} \cos^2\left(\frac{\pi\lambda}{2}\right) \left[\Gamma\!\left(\frac{1+\lambda}{2}\right)\right]^2},\;\;\;g(\lambda)=\cos^2(\pi\lambda/2).
\end{equation} 
Since the pole is order 2, the regularized limit is given by equation \eqref{reglimdouble}. But $g'''(2l+1)=0$ so that the regularized limit simplifies to
\begin{equation}\label{reglimdouble2}
	\begin{split}
	\reglim{\lambda}{2l+1} \frac{f(\lambda)}{g(\lambda)} =& \frac{f''(2l+1)}{g''(2l+1)}  -\frac{2}{3} \frac{f'(2l+1) g'''(2l+1)}{(g''(2l+1))^2}\\
	 &- \frac{f(2l+1) g''(2l+1) g''''(2l+1)}{6 (g''(2l+1))^3} .
	\end{split}
\end{equation}
Evaluating the indicated derivatives at $\lambda=2l+1$, we obtain the desired finite-part integrals,
\begin{equation}\label{yo}
\begin{split}
    \bbint{0}{\infty} \frac{Y_0(t)}{t^{2l+1}}\,\mathrm{d}t =& \frac{(-1)^l}{3\pi 2^{2(l+1)} (l!)^2}\\ &\hspace{-18mm}\times\left[\pi^2-12 \ln^2 2 - 24 \ln 2 \psi(l+1) - 12 (\psi(l+1))^2 + 6 \psi^{(1)}(l+1)\right].
    \end{split}
\end{equation}

Substituting the finite-part integrals \eqref{yo} back into equation \eqref{pipi} and performing some simplifications, we obtain the Stieltjes transform
\begin{equation}
\begin{split}
    \int_0^{\infty} \frac{Y_0(t)}{\omega+t}\,\mathrm{d}t =& \frac{J_0(\omega)}{\pi}\left(\operatorname{Log}^2 \omega -\ln^22 -\frac{\pi^2}{4}\right) - Y_0(\omega) \operatorname{Log}\omega\\ &\hspace{-20mm}+\frac{2}{\pi}\sum_{l=0}^{\infty} \frac{(-1)^l}{(l!)^2} \left[\psi^{(1)}(l+1)-2 (\psi(l+1))^2 - 4\ln 2 \, \psi(l+1)\right]\left(\frac{\omega}{2}\right)^{2l},
    \end{split}
\end{equation}
which is valid for all $|\omega|<\infty$, $|\operatorname{Arg}\omega|<\pi$, for the same reasons as in \eqref{neni}.

\section*{Acknowledgment}
The author acknowledges the kind and generous invitation of Prof. Roderick Wong to Liu Bie Ju Center of Mathematical Sciences of the City University of Hong Kong in 2019. It was an encouraging experience for the author to discuss finite-part integration with Prof. Wong, and with his students and collaborators. It was enlightening  to the author to personally listen to Prof. Wong discuss his distributional approach to asymptotics. The author also acknowledges the ensuing insightful conversations with Dr. Yutian Li, and the warm hospitality of Ms. Sophie Xie. The idea of regularized limit was conceived and formalized during the author's visit at the Center. This work was funded by the UP-System Enhanced Creative Work and Research Grant (ECWRG 2019-05-R).

\end{document}